\def\@@@nomenclature[#1]#2#3#4{%
 \def\@tempa{#2}\def\@tempb{#3}\def\@tempc{#4}%
 \protected@write\@nomenclaturefile{}%
  {\string\nomenclatureentry{#1\nom@verb\@tempa @[{\nom@verb\@tempa}]%
      \begingroup\nom@verb\@tempb\protect\nomeqref{\theequation}%
        |nomlabelref}{\@tempc}}%
 \endgroup
 \@esphack}
\newcommand{\subscript}[2]{$#1 _ #2$}
\newmdenv[tikzsetting={draw=black, line width=0.5pt, dash pattern=on 1pt off 1pt, dashed}, linecolor=white, outerlinewidth=1pt]{examplebox}
\newtheorem{thm}{Theorem}
\newtheorem{lemma}{Lemma}
\newtheorem{corr}{Corollary}
\newtheorem{example}{Example}
\theoremstyle{definition}
\newtheorem{defn}{Definition}
\newtheorem{assumption}{Assumption}
\theoremstyle{remark}
\newtheorem*{remark}{Remark}
\begin{document}

\title{Regularity Conditions for Critical Point Convergence}

\author[1]{Thomas J. Maullin-Sapey}
\author[2]{Samuel Davenport}
\affil[1]{School of Mathematics, University of Bristol, Bristol, United Kingdom}
\affil[2]{Division of Biostatistics, University of California, San Diego, CA, USA}
\date{}
\maketitle

\begin{abstract}
We focus on a sequence of functions $\{f_n\}$, defined on a compact manifold with boundary $S$, converging in the $C^k$ metric to a limit $f$. A common assumption implicitly made in the empirical sciences is that when such functions represent random processes derived from data, the topological features of $f_n$ will eventually resemble those of $f$. In this work, we investigate the validity of this claim under various regularity assumptions, with the goal of finding conditions sufficient for the number of local maxima, minima and saddle of such functions to converge. In the $C^1$ setting, we do so by employing lesser-known variants of the Poincar\'{e}-Hopf and mountain pass theorems, and in the $C^2$ setting we pursue an approach inspired by the homotopy-based proof of the Morse Lemma. To aid practical use, we end by reformulating our central theorems in the language of the empirical processes.
\end{abstract}

\noindent\textbf{Keywords:} Critical Points, Homological Index, Morse Theory, Empirical Processes.\\
\noindent\textbf{2020 MSC:} Primary 58K05; Secondary 60G60.

\section{Introduction}

Practical image analysis settings conventionally assume that a sequence of continuous functions, $f_n$, defined on a compact domain $S$, converge in some appropriately chosen metric to a limiting process, $f$ (for instance, see the discussions of \cite{adler1981geometry}, \cite{Cheng2017} and \cite{davenport2022confidenceregionslocationpeaks}). Although often not acknowledged, many empirical studies  implicitly assume that, as more data is collected and $n$ increases, topological features of $f_n$, such as the number of critical points, will converge to those of $f$. Without appropriate assumptions in place, this is not always the case, and, in fact, there are many settings where assuming such convergences hold would be misleading and inaccurate. For instance, images of the night sky found in the cosmic background explorer dataset will exhibit fractal-like properties, with more stars appearing as resolution increases,  limiting the finite resolution interpretation of topological features \citep{COLEMAN1992,Pino1995}. By contrast in medical imaging fields such as neuroimaging, great importance is assigned to topological features such as the number of `peaks' and `clusters' in an image of activity in the human brain derived from a finite number of subjects, $n$ \cite{Friston1994, Chumbley2010, maullin2023spatial}. However little is provided in the way of theoretical guarantees that these features are representative of the limiting average. 

In this work, we aim to answer a fundamental question; under what regularity conditions can we assume that the number of local maxima, minima and saddles of $f_n$ converge to that of $f$? Specifically, we explore the consequences of assuming $f_n\rightarrow f$ in the $C^k$ metric for various $k\in \mathbb{N}$, with an underlying $C^k$ structure placed on the domain $S$. It should be noted that in many real-world settings, authors have had to derive such conditions for specific applications \citep{Adler2009, Cheng2017, Eldar2024}. However, we believe that in such settings the regularity assumptions made can be greatly reduced, and there is a clear need in the literature for a canonical theory, so that authors may bypass such derivations in future.

Although seemingly simple, the problem we consider requires much machinery to resolve. A first approach which we consider is to apply the techniques of Morse theory to draw a correspondence between critical points of $f_n$ and $f$. Doing so is complicated by two factors. Firstly, standard Morse theoretic arguments guarantee the existence of neighborhoods around critical points in which $f_n$ and $f$ behave nicely, but typically do not quantify the size of such neighborhoods. This is important, as to draw a bijection between the critical points of $f_n$ and $f$, we need to rule out cases where $f$ has a critical point with a Morse neighborhood which contains multiple Morse neighborhoods of critical points of $f_n$. Secondly, such an approach does not tell us about the convergence of degenerate critical points, nor anything about what can be said when $f_n$ converges in the $C^1$ metric.

An alternative technique is to use the tools of homology theory. This is an approach which we also pursue, allowing us to analyze more complex degenerate saddles and settings in which $f_n$ and $f$ are only $C^1$. However, this approach again suffers multiple complications. First, to draw a correspondence between critical points, we require a non-conventional notion of homological index for manifolds with boundary, which we draw from \cite{jubin2009generalized}. Second, the homological index does not distinguish between local maxima, minima and saddles in all dimensions. For instance, in two-dimensions the homological index of a local minimum and a local maximum are both one. In order to state something meaningful about local maxima, minima and saddles in such contexts, we shall instead require a variant of the mountain pass theorem for convex domains (c.f. \cite{Jabri_2003} and Appendix \ref{app:mpt}).

This paper is structured as follows. In Section \ref{sec:notation}, we detail the notation we shall use, with Section \ref{sec:homology_notation} introducing homological concepts and Section \ref{sec:morse_notation} outlining Morse-theoretic definitions. Section \ref{sec:theory} then details our theory with illustrative examples given throughout. Specifically, Section \ref{sec:C0} briefly illustrates why convergence in the $C^0$ metric is not sufficient for our purposes. Following this, in Section \ref{sec:homology}, we use the extended Poincar\'{e}-Hopf theorem of \cite{jubin2009generalized}, and a variant of the mountain pass theorem for convex domains provided by \cite{Jabri_2003} to investigate the behavior of the critical points under the assumption of $C^1$ convergence. Then, in Section \ref{sec:morse}, using the variant of Thom's homotopic proof (\cite{arnold1985singularities}) provided by \cite{ioffe1997}, we investigate whether Morse theory can be used to draw a correspondence between critical points under the assumption of $C^2$ convergence. We end with a short section, Section \ref{sec:prob}, adapting our core results into the framework of empirical random processes outlined by \cite{van1996weak}, along with a short discussion in Section \ref{sec:discussion}. Supplemental lemmas are contained in Appendix \ref{app:lemmas}, while the versions of the Poincar\'{e}-Hopf theorem, mountain pass theorem, and Morse lemma that we employ are provided in Appendices \ref{app:ph}, \ref{app:mpt} and \ref{app:morse}, respectively. As we draw on notions from a range of areas, supplemental material is provided acting as a primer on the concepts we reference from empirical random processes, manifold theory and singular homology. For all figures of parameterized surfaces in this document, corresponding interactive Desmos plots are also linked in the supplement.

\section{Notation}\label{sec:notation}

In this section, we list the notation which will be used throughout the document. As we draw from several mathematical domains, a nomenclature is included at the end of the document to help track the notation employed.

The following notation is universally employed in both the document, appendices and supplement. For an arbitrary set $A$, we denote its complement as $A^c$\nomenclature{$A^c$}{Complement of the set $A$}, closure as $\overline{A}$\nomenclature{$\overline{A}$}{Closure of the set $A$}, topological boundary as $\partial A$\nomenclature{$\partial A$}{Topological boundary of the set $A$} and interior as Int$(A)$\nomenclature{Int$(A)$}{Interior of the set $A$}. For an arbitrary point $s \in U\subseteq \mathbb{R}^D$, we denote the closed $\epsilon$-ball around $s$ as $B_{\epsilon}(s):=\{s' \in U: |s-s'|\leq \epsilon\}$\nomenclature{$B_{\epsilon}(s)$}{$\epsilon$-ball around $s$}, where the ambient space $U$ is context-dependent. For brevity, we also adopt the shorthand $B_\epsilon:=B_\epsilon(0)$\nomenclature{$B_{\epsilon}$}{Shorthand for $B_\epsilon(0)$}. 

We assume that we are working on a $D$-dimensional\nomenclature{$D$}{Dimension of $S$} compact $C^k$-manifold\nomenclature{$k$}{Regularity of $S, f$ and $f_n$} with boundary, denoted $S$\nomenclature{$S$}{Compact $C^k$-manifold with boundary}, where $k$ will typically equal either $1$ or $2$. Points of $S$ are denoted with the lowercase letters $x,y,z,p$ or $s$, or variants thereof (e.g. $s',p^*,x_1,...$ etc)\nomenclature{$x,y,z,p,s$}{Arbitrary points in $S$}. When looking at functions on $S$, we shall assume they are of class $C^k$ for some $k\geq 1$, meaning unless stated otherwise, we assume the first derivative is always well-defined and continuous. The definitions we employ follow those of Chapter 1 of \cite{lee2013smooth} (see Supplementary Material \ref{supp:mflds} for further details). 

Given a $C^1$ function, $f:S\rightarrow \mathbb{R}$\nomenclature{$f$}{Scalar function on $S$, assumed to be either $C^1$ or $C^2$ depending on the context}, a point $s \in S$ is said to be critical if $\nabla f(s)=0$\nomenclature{$\nabla f$}{The gradient of $f$}. For ease, we denote the set of zeros of a vector field $v$ on $S$ as $Z(v)$\nomenclature{$Z(v)$}{The set of zeros of a vector field $v$}, e.g. $Z(\nabla f):=(\nabla f)^{-1}(0)$. When $f$ is $C^2$, we denote the Hessian of $f$ at $s\in S$ as $H_f(s)$\nomenclature{$H_f(s)$}{Hessian of $f$ at $s$}\nomenclature{$H, H_n$}{Shorthand for $H_{f}(0)$ and $H_{f_n}(0)$, respectively}. For brevity, we shall sometimes adopt the shorthand $H:=H_f(0)$ and $H_n:=H_{f_n}(0)$. Throughout this work, we shall be considering convergence in the $C^k$ metric, defined as follows:

\begin{mdframed}
    \begin{defn}[Convergence in the $C^k$-metric]
        A sequence of functions $f_n:S\rightarrow \mathbb{R}$ is $C^k$ convergent to $f$$:S \rightarrow \mathbb{R}$ if both $\{f_n\}$ and $f$ are (at least) $C^k$ and all partial derivatives of $f_n$ of order $\{ 0, 1, \dots, k\}$ converge uniformly to those of $f$. We denote this as $f_n\xrightarrow{C^k} f$\nomenclature{$\xrightarrow{C^k}$}{Convergence in the $C^k$ metric}.
    \end{defn}
    \vspace*{0.25cm}
\end{mdframed}
In particular, we have that if $f_n\xrightarrow{C^2} f$, then $|f_n-f|$, $|\nabla f_n-\nabla f|$ and $||H_{f_n}-H_f||$ converge to zero uniformly on $S$, where $|\cdot|$\nomenclature{$\lvert\cdot\lvert$}{Uniform norm on vector valued functions} and $||\cdot||$\nomenclature{$\lvert\lvert\cdot\lvert\lvert$}{Uniform norm on matrix valued functions} represent a choice of vector norm and the matrix norm it induces, respectively. \\

\subsection{The Homological Index}\label{sec:homology_notation}

In this work, we shall consider two conventions for classifying `types' of critical points. The first uses topological degree theory. A short supplement summarizing the key concepts of singular homology we draw upon may be found in Supplemental Material \ref{supp:singhom}. In this subsection, we introduce notation for the proofs of Section \ref{sec:homology}. Here, we assume that $f$ is of at least $C^1$ regularity. 

We now define our first tool for classifying critical points: the \textit{Homological Index}. As the definition we employ is an extension, derived by \cite{jubin2009generalized} for manifolds with boundary, of the standard definition, we outline it below.
\begin{mdframed}
    \begin{defn}[Homological Index on a Manifold]\label{def:homological_index_mfld} Let $v$ be a continuous vector field on $S$ with isolated zeros, $z \in Z(v)\cap \text{Int}(S)$ and let $\epsilon > 0$ be such that $B_\epsilon(z) \cap Z(v)=\{z\}$. Then we define the \textit{homological index of $v$ at $z$} as:
    \begin{equation}\nonumber
        \text{Ind}^H(v,z):= \text{deg}(\varphi) \quad \text{ where } \quad \varphi:\partial B_\epsilon(z)\rightarrow\mathbb{S}^{D-1}\quad  \text{ is defined by }\quad \varphi(s)=\frac{v(s)}{||v(s)||}.
    \end{equation} 
    \nomenclature{$\text{Ind}^H(v,z)$}{The homological index of the vector field $v$ at $z\in \text{Int}(S)$}where deg represents the topological degree\nomenclature{$\text{deg}$}{The topological degree} (see Supplemental Material Section \ref{supp:singhom}). If $Z(v)$ is finite, the \textit{interior homological index of $v$}\nomenclature{$\text{Ind}^{H}_\circ(v)$}{Interior homological index of the vector field $v$ on $S$} is defined as the sum of the homological indices of $v$ over all zeros $z$ in the interior of $S$. That is:
    \begin{equation}\nonumber
        \text{Ind}^H_\circ(v):= \sum_{z \in Z(v) \cap \text{Int}(S)} \text{Ind}^H(v,z).
    \end{equation}
    \end{defn}
    \vspace{0.1cm}
\end{mdframed}
For further detail on this definition, as well as proof that it is not affected by the choice of sufficiently small $\epsilon$, we recommend \cite{lee2010topological} and \cite{guillemin2010differential}. Definition \ref{def:homological_index_mfld} assumes $v\in \text{Int}(S)$ and thus cannot be used to describe the behavior of the vector field $v$ on boundary of $S$. For this reason, the definition must be extended. \\
\\
To do so, we must first introduce the notion of a ``collar" for a manifold with boundary. Following the work of \cite{jubin2009generalized}, we define this as follows:
\begin{mdframed}
    \begin{defn}[Collar of a Manifold with Boundary] Given a compact $C^1$ manifold with boundary, $S$, a collar of $S$ is a diffeomorphism $\phi:U \rightarrow \partial S \times \mathbb{R}_{\geq 0}$ where $U\subset S$ is an open subset containing $\partial S$. The tangential and transversal components of $\phi$ are the maps $\phi_1:U\rightarrow \partial S$ and $\phi_2:U\rightarrow \mathbb{R}_{\geq 0}$ satisfying $\phi=(\phi_1,\phi_2)$. 
    \end{defn}
    \vspace{0.2cm}
\end{mdframed}

\begin{floatingfigure}[r]{0.7\textwidth}\centering
    \includegraphics[width=0.7\textwidth]{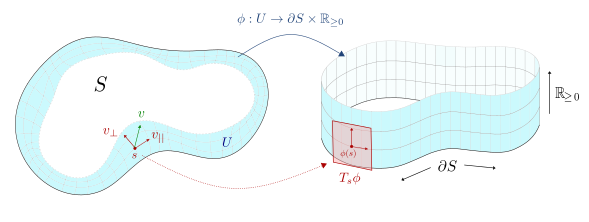}
    \vspace{-0.5cm}\captionof{figure}{The collar of a manifold with boundary. Tangential and transversal directions are indicated by the grey gridlines, with a specific example provided for an arbitrary point $s\in S$ (red). }
    \label{fig:collar}
\end{floatingfigure}

Given a collar $\phi:U\rightarrow \partial S \times \mathbb{R}_{\geq 0}$, any continuous vector field $v$ defined over $U$ can be decomposed into tangential and transversal components with respect to $\phi$, $v_{||}$ and $v_{\perp}$, defined by $v_{||} = T \phi_1 \circ v$ and $v_{\perp} = T \phi_2 \circ v$,
where $Tf$ represents the tangent space of $f$ (see Fig. \ref{fig:collar}). Using these definitions, we may define a notion of \textit{`boundary index of $S$'}.

\begin{mdframed}
    \begin{defn}[Homological Boundary Index] Let $S$ be a compact manifold with boundary and $v$ be a continuous vector field defined on $S$. Let $\phi$ be a collar for $S$ and suppose that, when restricted to $\partial S$, the transversal component of $v$ with respect to $\phi$ has isolated zeros. If $z \in Z(v_{||}|_{\partial S})$ then we define the \textit{homological boundary index of $z$} as follows:
        \begin{equation}\nonumber
            \text{Ind}_\partial^H(v,z)= w(z)\cdot\text{Ind}^H\bigg(v_{||}\big|_{\partial S}, z\bigg) \quad \text{ where }w(z)=\begin{cases}
                \frac{1}{2} & \text{If }v(z)\text{ points into }S\text{,} \\
                -\frac{1}{2} & \text{If }v(z)\text{ points out of }S\text{.} \\
            \end{cases}
        \end{equation}
        If the transversal component of $v$ with respect to $\phi$ does not have isolated zeros then  $v$ and $\phi$ can be perturbed to obtain new vector field $\tilde{v}$ and collar $\tilde{\phi}$ such that the transversal component of $\tilde{v}$ with respect to $\tilde{\phi}$ does have isolated zeros. In this case, we define the index $\text{Ind}_\partial^H(v,z):=\text{Ind}_\partial^H(\tilde{v},z)$.\nomenclature{$\text{Ind}_\partial^H(v,z)$}{The boundary homological index of the vector field $v$ at $z\in \partial S$} \\
        \\
        If $Z(v_{||}|_{\partial S})$ is finite, the \textit{boundary homological index of $v$}\nomenclature{$\text{Ind}^{H}_\partial(v)$}{Boundary homological index of the vector field $v$ on $S$} is defined as:
        \begin{equation}\nonumber
            \text{Ind}^H_\partial(v):= \sum_{z \in Z\big(v_{||}\big|_{\partial S}\big)} \text{Ind}^H_\partial(v,z).
        \end{equation}
    \end{defn}
    \vspace{0.1cm}
\end{mdframed}
That the above definitions are independent of the choice of collar $\phi$ and (suitably small) perturbations is shown by Propositions 8, 9 and 10 of \cite{jubin2009generalized}. The above may now be used to define the notion of homological index on a manifold with boundary.
\begin{mdframed}
    \begin{defn}[Homological Index on a Manifold with Boundary]\label{def:homological_index} Let $v$ be a continuous vector field on a manifold with boundary $S$ with isolated zeros. The \textit{homological index of $v$}\nomenclature{$\text{Ind}^{H}(v)$}{Homological index of vector field $v$ on $S$} is defined as:
    \begin{equation}\nonumber
        \text{Ind}^H(v) := \text{Ind}^H_\partial(v) + \text{Ind}^H_\circ(v).
    \end{equation}
    \end{defn}
    \vspace{0.1cm}
\end{mdframed}
A deep theorem of topology and geometry, originally shown by \cite{H1881} and \cite{Hopf1927}, generalized by \cite{jubin2009generalized} and stated formally in Appendix \ref{app:ph}, is the Poincar\'{e}-Hopf theorem. Broadly speaking this theorem states the index of a vector field on a manifold with boundary $S$ is equal to its Euler characteristic, $\chi(S)$\nomenclature{$\chi$}{Euler characteristic}, if the dimension of $S$ is even and zero otherwise. An illustration of the above construction for two maps defined on the two-dimensional $\epsilon-$ball is provided by Fig. \ref{fig:indexbdry}. As can be seen, for both maps the homological index is $1$, which equals $\chi(B_\epsilon)$.

{\centering
    \includegraphics[width=0.75\textwidth]{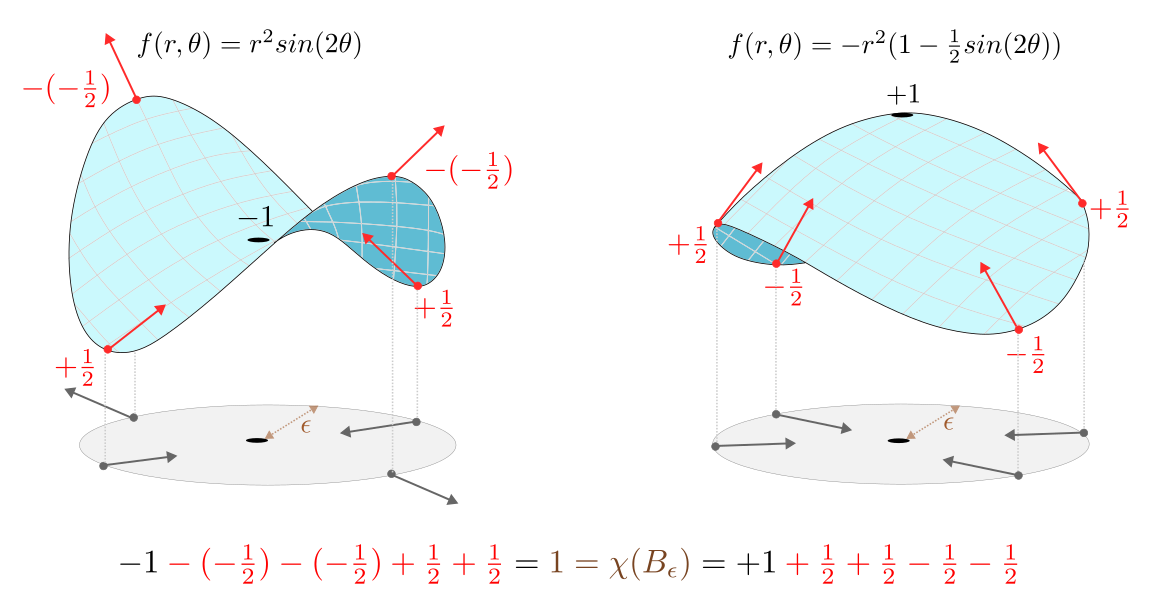}
    \captionof{figure}{Illustration of the computation of $\text{Ind}^{H}(\nabla f)$ for two functions. The contributions from critical points on the boundary are highlighted in red, whilst those of the critical points in the interior are shown in black. As predicted by Poincar\'{e}-Hopf theorem, Theorem \ref{thm:poincarehopf}, in this case $\text{Ind}^{H}(\nabla f)=1=\chi(B_\epsilon)$ for both functions.}
    \label{fig:indexbdry}}
    \vspace{0.2cm}

If $f\in C^1(S, \mathbb{R})$\nomenclature{$C^k(X,Y)$}{Space of $C^k$ functions from $X$ to $Y$}, $s \in \text{Int}(S)$ is a critical point, $\lambda \in \mathbb{Z}$ and $\text{Ind}^H(\nabla f, s)=\lambda$, then we say that $s$ is a critical point of homological index $\lambda$. These definitions will be especially helpful in identifying points of undulation (critical points at which curvature does not change).

Suppose $f\in C^1(S,\mathbb{R})$ has isolated critical points. Formally, a point $p\in S$ is a local maxima of $f$ if it has a neighborhood over which $f<f(p)$, with the analogous definition holding for local minima. If $p\in S$ is a critical point with homological index $0$, we say that it is an undulation point. If $p\in S$ is a critical point that is neither a local maxima, minima, or undulation point, we say that it is a saddle point. As shown in Fig. \ref{fig:homology_index}, for two-dimensional functions, local maxima and minima are critical points of homological index $1$, undulation points have homological index $0$, hyperbolic paraboloid saddle points have homological index $-1$, monkey saddle points have homological index $-2$ and more complex examples of non-degenerate saddles have index $< -2$. \\

{\centering
    \includegraphics[width=0.8\textwidth]{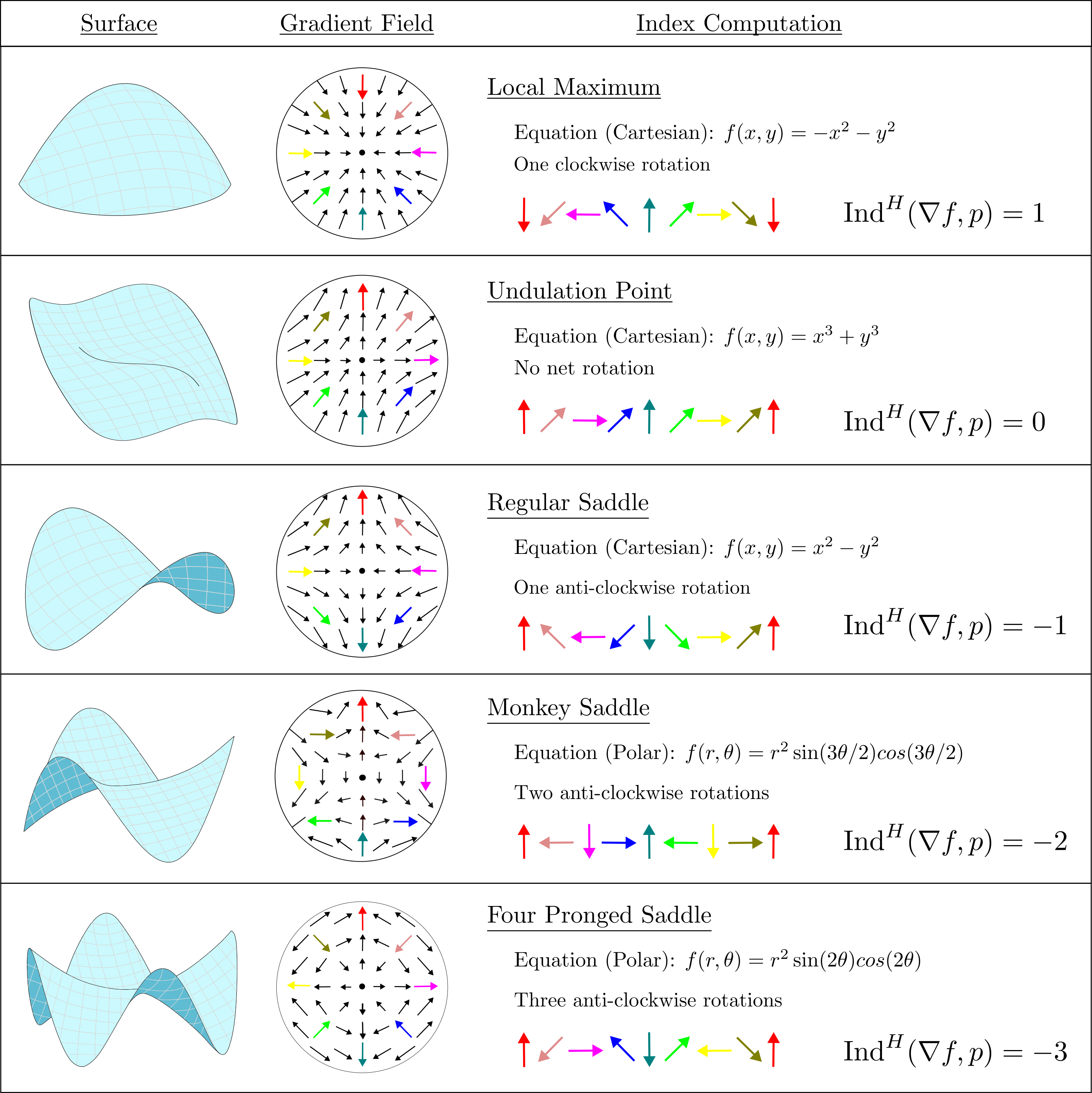}
    \captionof{figure}{Illustration of the homological index for isolated critical points. In each example, a surface plot is shown in blue alongside a top-down view of the gradient field of the surface. In each gradient field, a sequence of vectors, moving around the critical point clockwise are highlighted. The same vectors, standardized, are then laid out horizontally, on the right. To compute the homology index, we count the number of times the vectors rotate clockwise.}
    \label{fig:homology_index}}

\vspace{0.5cm}

By extending the logic illustrated in Fig.~\ref{fig:homology_index}, it can be seen that if $p$ is a saddle point of $f:\mathbb{R}^2\rightarrow\mathbb{R}$, with $j$ `prongs' ($j\geq 2$) then $\text{Ind}^H(\nabla f,p)=-(j-1)$. We note here that it is also possible to generate vector fields with zeros at which the degree equals $2$ or higher. However, in two dimensions, such fields do not define gradients as they necessarily possess non-zero curl, and are thus not of interest to this work (c.f. \cite{guillemin2010differential}). In higher dimensions, the homological index is harder to visualize.

Throughout this work, we shall denote the number of critical points of $f$ that have homological index $\lambda$ as $N^H_\lambda(f)$\nomenclature{$N^H_\lambda(f)$}{Number of critical points of a function $f$ with homological index $\lambda$}. We shall denote $N_C(f)$, $N_M(f)$, $N_m(f)$ and $N_S(f)$\nomenclature{$N_C(f)$}{Number of critical points of a function $f$}\nomenclature{$N_M(f)$}{Number of local maxima of a function $f$}\nomenclature{$N_m(f)$}{Number of local minima of a function $f$}\nomenclature{$N_S(f)$}{Number of saddle points of a function $f$} as the total number of critical points, local maxima, local minima and saddle points of $f$. Our aim is to find minimally sufficient conditions under which these quantities converge for $f_n$ converging to $f$ (e.g. under what conditions does $N_\lambda^H(f_n)\rightarrow N_\lambda^H(f)$?).

\subsection{The Morse Index}\label{sec:morse_notation}

The second notion of classifying critical points which we shall consider is the Morse index. To discuss the Morse index of a critical point $p$ of a function $f$, we assume $f$ is of at least $C^2$ differentiability, with the Hessian matrix of $f$ at point $p\in S$ denoted $H_f(p)$. For a critical point $p$ of the function $f$, if $H_f(p)$ is of full rank, the Morse index, $\text{Ind}^M(\nabla f,p)$\nomenclature{$\text{Ind}^M(\nabla f,z)$}{Morse index of $f$ at $z\in\text{Int}(S)$} is defined as the number of negative eigenvalues of $H_f(p)$ (see Fig. \ref{fig:morse_index}).

When analyzing critical points, one benefit of employing Morse theory over the homological approach is that it can provide richer information about the type of critical point being considered. For instance, in three dimensions, if $\text{Ind}^H(\nabla f,p)=1$ then $p$ could be a saddle or local minima. By contrast, the Morse index tells us exactly what type of critical point $p$ is, allowing us to distinguish between saddles, minima and maxima regardless of dimension. 

On the other hand, Morse theory can also be viewed as restrictive in comparison to the homological approach as it assumes at least $C^2$ differentiability, and rules many of the examples of the previous section from consideration, regarding them as `degenerate'. In the Morse theoretic framework, a critical point is said to be degenerate if $\text{det}(H_f)= 0$, and non-degenerate otherwise. A non-degenerate critical point is also known as a Morse point. Note that many interesting critical points from the previous section are not Morse and thus cannot be analyzed using Morse theory (see, for instance, the Monkey saddle in Fig. \ref{fig:morse_index} or the Peano surface of Example \ref{ex:peano}). As $\text{Ind}^H(\nabla f, p)=0$ implies that $\text{det}(H_f(p))=0$, points of undulation are also not Morse points, and thus cannot be considered in the Morse theory framework. In sum, the Morse index can be used to categorize a smaller class of critical points than the homological index, but to a greater specificity.

Typically, Morse theory is only concerned with the class of Morse functions. A function $f$ is said to be Morse if it is $C^2$ and possesses no degenerate critical points. For a Morse function $f$, we denote the number of Morse points of index $\lambda$ as $N^M_\lambda(f)$\nomenclature{$N^M_\lambda(f)$}{Number of critical points of a function $f$ with Morse index $\lambda$}. 

We note here that the homological index can also be computed with relative ease for a Morse function. Specifically, if $f : \mathbb{R}^D \to \mathbb{R}$ is Morse with a non-degenerate critical point at $p$, and is convex in $j$ orthogonal directions and concave in the remaining $D - j$, then $\text{Ind}^H(\nabla f, p) = (-1)^j$. This fact can be shown by applying the Morse Lemma (c.f.\ Appendix~\ref{app:morse}), which allows $f$ to be written locally as a quadratic form with $D-j$ negative and $j$ positive terms. The normalized gradient map $\sfrac{\nabla f}{\|\nabla f\|}$ then corresponds to a reflection across $D-j$ coordinates, which can be shown to have degree $(-1)^j$ using standard results such as Propositions~13.25 and~13.27(c) of~\cite{lee2010topological}.

%

\section{Theory}\label{sec:theory}

Our aim throughout this section is to determine minimal regularity conditions under which $\{N_\lambda^H\}, \{N_\lambda^M\}, N_C, N_M, N_m$ and $N_S$ of $f_n$, converge to the corresponding quantities of $f$. In Section \ref{sec:C0}, we begin this process by considering some examples in which these quantities do not converge, given $f_n\xrightarrow{C^0} f$. Following this, in Section \ref{sec:homology}, we consider what happens to critical points when we assume that $f_n\xrightarrow{C^1} f$. As we shall see, given an additional assumption concerning the \textit{`critical point resolution'} of $f_n$, there is much that can stated in this context. In particular, we are able to make convergence statements about the number of local maxima, minima and saddles of $f_n$ converging to those of $f$. Finally, in Section \ref{sec:morse}, we consider an altogether different approach, using Morse theory to show that, if $f$ is Morse and $f_n\xrightarrow{C^2}f$, then $N_\lambda^M(f)$ converges for all $\lambda$.

{\centering
    \includegraphics[width=0.8\textwidth]{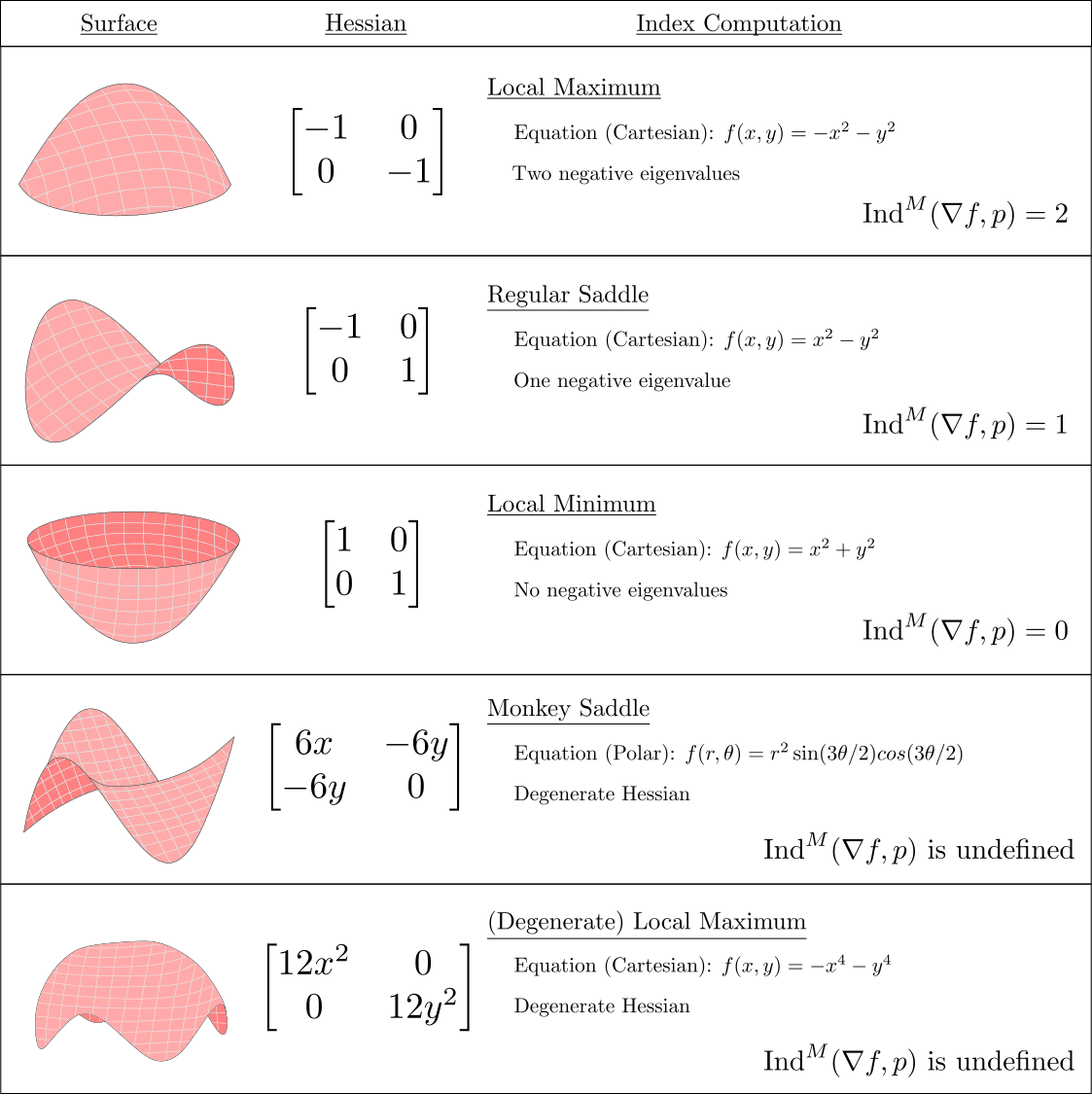}
    \captionof{figure}{Illustration of the Morse index for various Morse points. In each example, a surface plot is shown in red alongside the Hessian matrix of the function. To compute the Morse index, we count the number of negative eigenvalues of the Hessian. The final two examples illustrate settings in which the Morse index cannot be computed (but the homology index can).}
    \label{fig:morse_index}}

\subsection{Uniform Convergence}\label{sec:C0}

Suppose $S$ is a topological manifold with boundary. We begin by noting that without at least $C^1$ convergence, very little may be said in order to relate the topology of $f_n$ to that of $f$, as the following examples illustrate. 

\begin{examplebox} 
    \begin{example}\label{ex:C0}
        If $f_n:S\rightarrow \mathbb{R}$ is $C^0$ convergent to $f:S\rightarrow \mathbb{R}$, then $N_C(f_n)$ may tend to a limit larger or smaller than $N_C(f)$, or not converge at all.
        
{\centering
    \includegraphics[width=\textwidth]{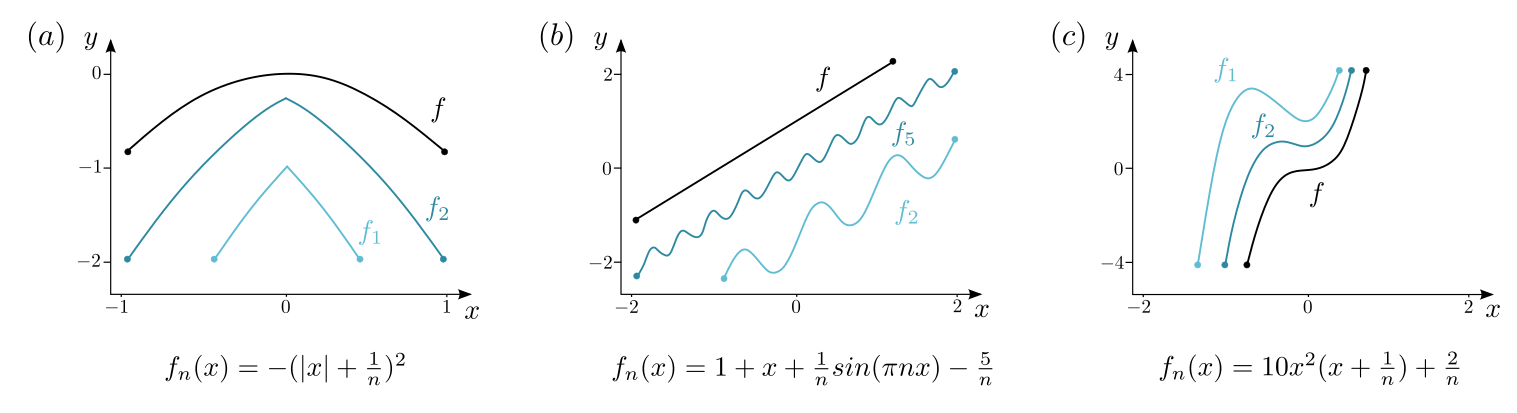}
    \captionof{figure}{Examples in which (a) $f_n\xrightarrow{C^0} f$ and $f$ is Morse, but $N_C(f_n)$ is undefined for all $n$, (b) $f_n\xrightarrow{C^0} f$, each $f_n$ is Morse and $f$ is Morse, but $N_C(f_n)\rightarrow \infty$ whereas $N_C(f)=0$, and (c) $f_n\xrightarrow{C^1} f$, each $f_n$ is Morse and $f$ is Morse, but $N_C(f_n)=2$ for all $n$, whilst $N_C(f)=1$. In the case of (b), it is worth noting that the number of critical points does not converge on any subset of $S=[-2,2]$ either.}
    \label{fig:basicexamples}}
        
    \end{example}
    \vspace*{0.1cm}
\end{examplebox}
Given these examples, it is clear that approximating topological features of $f$ using $f_n$ is not always sensible. It may still be hoped, though, that the number of local maxima and minima of $f_n$ may somehow be related to those of $f$, under mild conditions.

To explore this idea further, for this section only, we shall define a point $p\in \text{Int}(S)$ to be an improper local maxima of $f$ if it has a neighborhood over which $f\leq f(p)$, with the analogous definition holding for improper local minima. We let the number of improper maxima and minima be denoted $N_{IM}(f)$ and $N_{Im}(f)$, respectively. Panels $(b)$ and $(c)$ of Fig. \ref{fig:basicexamples} clearly demonstrate that $N_{IM}(f_n)$ does not necessarily converge to $N_{IM}(f)$ when $f_n\xrightarrow{C^0}f$ as critical points can become arbitrarily close to one another, but it may be hoped that eventually $N_{IM}(f_n)\geq N_{IM}(f)$ in some sense. This idea is formalized in the following lemma.
\begin{mdframed}
    \begin{lemma}[Improper Maxima and Minima Bound]\label{lemma:max_bdd}
        Suppose $f_n\xrightarrow{C^0}f$ and 
        that $f$ has isolated improper maxima and minima. Then:
        \begin{equation}\nonumber
            \liminf_{n\rightarrow \infty} N_{IM}(f_n)\geq N_{IM}(f) \quad \text{ and }\quad \liminf_{n\rightarrow \infty} N_{Im}(f_n)\geq N_{Im}(f).
        \end{equation}
    \end{lemma}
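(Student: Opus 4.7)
The plan is to localize the argument at each isolated improper maximum $p$ of $f$, and use uniform convergence plus compactness to produce a corresponding improper maximum of $f_n$ in a small disjoint ball around $p$. The key preliminary observation I would establish is that, although the definition of improper maximum only asserts $f \leq f(p)$ nearby, the \emph{isolation} hypothesis sharpens this to strict inequality: for any isolated improper maximum $p$, there exists an open neighborhood $V \ni p$ with $f(s) < f(p)$ for all $s \in V \setminus \{p\}$. To prove this, take $V_1 \ni p$ open with $f \leq f(p)$ on $V_1$ (from improperness) and $V_2 \ni p$ containing no other improper maxima (from isolation). Any $s \in V_1 \cap V_2$ with $s \neq p$ and $f(s) = f(p)$ would satisfy $f \leq f(s)$ on the open neighborhood $V_1$, forcing $s$ itself to be an improper maximum inside $V_2$, contradicting isolation.

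Given this, fix any finite $K \leq N_{IM}(f)$ and pick distinct improper maxima $p_1, \dots, p_K \in \text{Int}(S)$ of $f$. Choose radii $\epsilon_i > 0$ so that the closed balls $\overline{B_{\epsilon_i}(p_i)}$ lie in $\text{Int}(S)$, are pairwise disjoint, and satisfy $f(s) < f(p_i)$ on $\overline{B_{\epsilon_i}(p_i)} \setminus \{p_i\}$. Compactness of $\partial B_{\epsilon_i}(p_i)$ together with continuity of $f$ then produces a strictly positive boundary margin
\[
\delta_i := f(p_i) - \max_{s \in \partial B_{\epsilon_i}(p_i)} f(s) > 0,
\]
and with $\delta := \tfrac{1}{3}\min_{i \leq K} \delta_i$, uniform convergence gives $|f_n - f| < \delta$ on $S$ for all sufficiently large $n$. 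Then $f_n(p_i) > f(p_i) - \delta_i/3$ while $f_n(s) < f(p_i) - 2\delta_i/3$ for $s \in \partial B_{\epsilon_i}(p_i)$, so the maximum of $f_n$ over the compact ball $\overline{B_{\epsilon_i}(p_i)}$ is attained at some interior point $q_i^{(n)} \in B_{\epsilon_i}(p_i) \subseteq \text{Int}(S)$, which is automatically an improper local maximum of $f_n$. Disjointness of the balls yields $K$ distinct such maxima, so $N_{IM}(f_n) \geq K$ for all large $n$, hence $\liminf_n N_{IM}(f_n) \geq K$. Letting $K \to N_{IM}(f)$ (in the extended sense if $N_{IM}(f) = \infty$) gives the result, and the minima case follows by running the identical argument on $-f_n \xrightarrow{C^0} -f$.

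The main obstacle is the punctured-neighborhood claim in the first paragraph: without strict inequality on a deleted neighborhood of each $p_i$, the boundary margin $\delta_i$ could vanish, and the maximum of $f_n$ on $\overline{B_{\epsilon_i}(p_i)}$ could slip onto $\partial B_{\epsilon_i}(p_i)$, yielding a boundary extremum that contributes nothing to $N_{IM}(f_n)$. Once that lemma is in hand, the remainder reduces to a routine uniform-convergence-plus-compactness bookkeeping exercise.
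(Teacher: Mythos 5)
Your proposal is correct and follows essentially the same route as the paper's proof: isolate each improper maximum, upgrade $f\leq f(p)$ to strict inequality on a punctured ball via the same isolation-contradiction argument, and then use the boundary margin together with uniform convergence to trap a maximum of $f_n$ in the interior of each disjoint ball. The only (harmless) difference is bookkeeping: you count a finite batch of $K$ maxima and let $K$ grow, whereas the paper asserts finiteness of $N_{IM}(f)$ outright from isolation and compactness; your version is, if anything, slightly more careful on that point.
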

    \vspace{0.1cm}
\end{mdframed}
\begin{proof}
    We show the result for improper maxima, with the case of improper minima following using identical logic. As the improper maxima are isolated, by compactness of $S$ $N_{IM}(f)$ is finite. Choose a single improper maximum, $p$, of $f$ and let $B_{\epsilon_p}(p)$ be a neighborhood of $p$ containing no other improper maxima, where $\epsilon_p$ is sufficiently small that $f\leq f(p)$ in $B_{\epsilon_p}(p)$. Without loss of generality, assume that the collection of sets $\{B_{\epsilon_p}(p)\}$, indexed by improper maxima $p$, are disjoint.
    
    Suppose there exists $s^* \in \text{Int}(B_{\epsilon_p}(p))\setminus\{p\}$ such that $f(s^*)=f(p)$. Then, by the construction of $B_{\epsilon_p}(p)$, $\text{Int}(B_{\epsilon_p}(p))$ is a neighbourhood of $s^*$ over which $f\leq f(s^*)$. However, this means that $s^*$ is also an improper maxima of $f$, which contradicts the definition of $B_{\epsilon_p}(p)$. Thus for all $s\in \text{Int}(B_{\epsilon_p}(p))\setminus\{p\}$, $f(s)<f(p)$. Shrinking the value of $\epsilon_p$ if necessary, assume now that for all $s\in B_{\epsilon_p}(p)\setminus\{p\}$, $f(s)<f(p)$.
    
    It follows that $c^*:=\sup_{s\in\partial B_{\epsilon_p}(p)}f(s)<f(p)$. Choose positive $\delta< f(p)-c^*$ and assume $n$ is large enough that $||f-f_n||_\infty<\delta/2$. It now follows that $\sup_{s \in \partial B_{\epsilon_p}(p)}f_n(s)<c^* + \delta/2 < f(p)- \delta/2 < f_n(p)$. Now, define $p_n:=\arg\sup_{s\in B_{\epsilon_p}(p)}f_n(s)$. As $f_n(p_n)\geq f_n(p)> \sup_{s \in \partial B_{\epsilon_p}(p)}f_n(s)$ it follows that $p_n\not\in \partial B_{\epsilon_p}(p)$ and thus $p_n$ is an improper local maxima of $f_n$. Noting the finiteness of $N_{IM}(f)$, it follows that for each improper local maxima of $f$, $p$, there is eventually a corresponding improper local maxima of $f_n$, $p_n$. The result now follows.
\end{proof}

It is natural to wonder whether additional conditions can be placed on the distance between improper maxima and minima of $f_n$, in order to rule out examples such as $(b)$ and $(c)$ of Fig. \ref{fig:basicexamples}, in the hope that a `limsup' statement similar to Lemma \ref{lemma:max_bdd} may be derived. However, as the next example shows, without convergent first order derivatives, such an approach is not successful.
\begin{examplebox}
    \begin{example}\label{example:singlemax}
        Let $f_n:S\rightarrow \mathbb{R}$ and $f:S\rightarrow \mathbb{R}$ be $C^\infty$ and Morse and suppose that, for all $n$, $f_n$ possesses a single isolated local maxima in $\text{Int}(S)$ with no other critical points. Suppose that $f_n\xrightarrow{C^0}f$. Then it is still possible that $f$ possesses no critical points at all, and thus $N_{IM}(f)=N_M(f)=0<N_{IM}(f_n)=N_M(f_n)=1$ for all $n$. 
        
        {\centering
            \includegraphics[width=\textwidth]{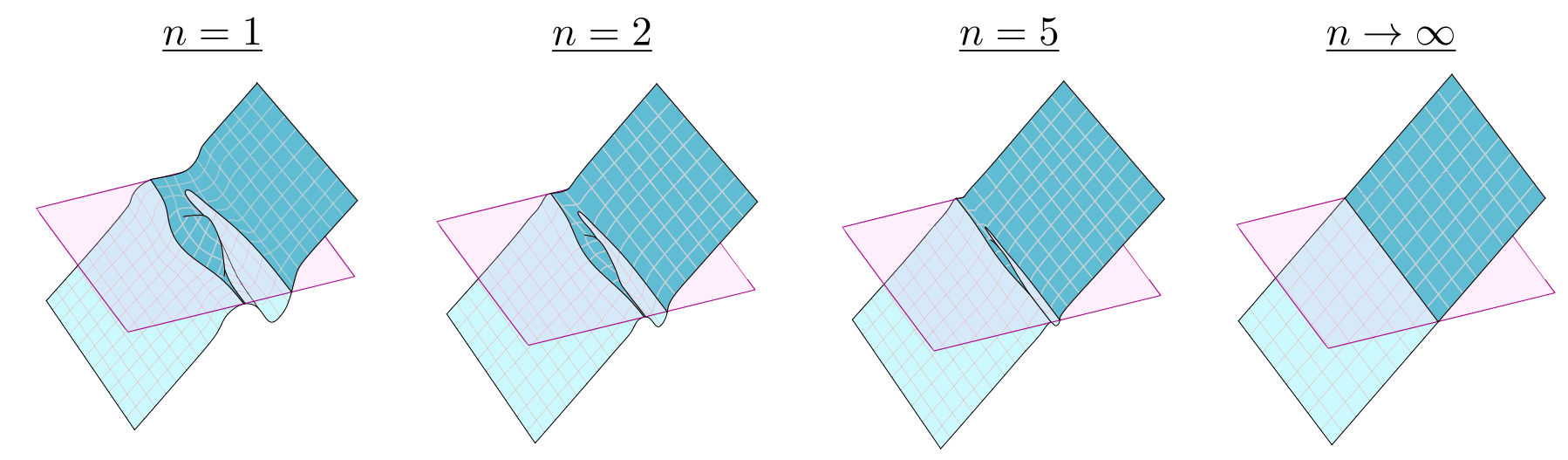}
            \captionof{figure}{A sequence of functions $f_n\xrightarrow{C^0}f$ such that each function is $C^\infty$ and Morse and every $f_n$ has a single isolated local maxima, but $f$ has no critical points at all. }
            \label{fig:singlemax}}
        
        \vspace{0.2cm}
        \noindent The function in Fig. \ref{fig:singlemax} is given by $f_n(x,y)=g(x,ny)/n$ where $g$ is defined below:
        \begin{equation}\nonumber
            g(x,y):=\begin{cases}
                y & y \in (-\infty,-1],\\
                (1-b(y))y+b(y)e^{-x^2} & y \in [-1,0],\\
                (1-b(y))s(x)+b(y)e^{-x^2} & y \in [0,1],\\
                b(y-1)s(x)+(1-b(y-1))(y-1) & y \in [1,2],\\
                y-1 & y \in [2,\infty).\\
            \end{cases}
        \end{equation}
        Here, the smooth bump function $b:\mathbb{R}^m \rightarrow \mathbb{R}$ is defined, for $m\geq 1$, by $b(s)=\text{exp}(1-\frac{1}{1-|s|^2})$ if $|s|<1$ and $0$ otherwise and the transition function $s:\mathbb{R}\rightarrow \mathbb{R}$ is given by $s(x)=-e^x/(e^x+e^{1-x})$.
    \end{example}
\end{examplebox}

To guarantee that the topology of $f_n$ eventually resembles that of $f$, it is clear that stronger conditions must be assumed. However, as illustrated by Fig. \ref{fig:basicexamples} (c), even $C^1$ convergence alone is not sufficient.

\subsection{Derivative Convergence and the Poincar\'{e}-Hopf Theorem}\label{sec:homology}

We now consider the setting in which $f_n\xrightarrow{C^1}f$. In this section, we will make use of two lesser known variants of two well-known theorems. The first, and most important, is the Poincar\'{e}-Hopf theorem, which we use to show that $f_n$ does indeed (eventually) have critical points located near those of $f$. Although traditionally stated for $C^1$ manifolds, the variant of Poincar\'{e}-Hopf that we employ here is based on the extension to $C^1$ manifolds with boundary provided by \cite{jubin2009generalized}. The second is the mountain pass theorem, which we use to distinguish local maxima, minima and saddles from one another. Although variants of this theorem are well-known, the version we employ is unconventional in that allows for $S$ to be a manifold with boundary and thus does not make use of the standard Palais-Smale conditions (c.f. \cite{Jabri_2003} Sections 10.2 and 17.2).

To begin, we first make a simplifying assumption.
\begin{examplebox}
\begin{assumption}[Critical Points on the Boundary]\label{assump:no_bdry_crits} $f:S\rightarrow \mathbb{R}$ has no critical points in $\partial S$. That is, $\partial S \cap Z(\nabla f) = \emptyset$.
    \vspace{0.3cm}
\end{assumption}
\end{examplebox}
\begin{remark}
    Without the above assumption, little can be said about the convergence of critical points of $f_n$ in relation to $f$. To understand why, consider $S=[0,1]$, $f(x)=-x^2$ and $f_n(s)=-(x-\frac{1}{n})^2$. Both $f$ and $f_n$ possess one critical point in $\mathbb{R}$, but $f_n$ never has a critical point in $S$. In this case, even though $f$ has a critical point $p \in \partial S$ and, for all $n$, $f_n$ has a corresponding critical point lying arbitrary close to $p$, this point always lies outside of the domain of interest $S$, and thus $N_C(f_n)\not\rightarrow N_C(f)$.\\
\\
  \noindent As $\partial S$ is compact, Assumption \ref{assump:no_bdry_crits} implies that on $\partial S$, $|\nabla f|$ is bounded below by a positive constant. If we additionally assume that $f_n\xrightarrow{C^1} f$, then the uniform convergence of $\nabla f_n$ implies that, for sufficiently large $n$, $|\nabla f_n|$ is non-zero on $\partial S$. Thus without loss of generality, whenever we assume (at least) $C^1$ convergence in the following, we shall assume that $n$ is large enough that all critical points of $f_n$ lie inside Int$(S)$.
\end{remark}

Given Assumption \ref{assump:no_bdry_crits} and $C^1$ convergence, we can now simplify the problem to the setting in which $S$ is a compact subset of $\mathbb{R}^D$. This is made explicit by the following lemma.
\begin{mdframed}
\begin{lemma}[Critical Neighborhoods in $\mathbb{R}^D$]\label{lemma:mfld_to_R}
    Suppose $f_n\xrightarrow{C^0}f$, $f_n$ is differentiable, $f$ is continuously differentiable and $|\nabla f_n-\nabla f|\rightarrow 0$ uniformly. Further, suppose that Assumption \ref{assump:no_bdry_crits} holds and, for each critical point $p$ of $f$ let $(U_p, \phi_p)$\nomenclature{$(U,\phi)$}{Chart on $S$} be a chart such that $p\in U_p$ and $\phi_p(p)=0\in \phi_p(U_p)$. Then, for $n$ large enough, for each critical point $p$ there exists $\eta_p>0$, such that $f_n$ has no critical points in $S\setminus \cup_p \phi_p^{-1}(B_{\eta_p})$.
    \vspace{0.2cm}
\end{lemma}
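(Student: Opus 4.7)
The plan is to reduce the assertion to a uniform lower bound on $|\nabla f|$ outside small neighborhoods of its critical set and then transfer that bound to $\nabla f_n$ via the hypothesis $|\nabla f_n-\nabla f|\to 0$.

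First, by the remark following Assumption~\ref{assump:no_bdry_crits}, $|\nabla f|$ is bounded below by a positive constant on the compact set $\partial S$, and uniform convergence of the gradients then yields the same property for $|\nabla f_n|$ on $\partial S$ once $n$ is large. In particular we may assume from the outset that every critical point of $f_n$ lies in $\text{Int}(S)$. The critical set $Z(\nabla f)$ is implicitly isolated (each critical point is singled out by its own chart, and the conclusion assigns a distinct $\eta_p$ to each), hence finite by compactness of $S$; write it as $\{p_1,\dots,p_K\}$. For each $p_i$ I would now pick $\eta_{p_i}>0$ small enough that the closed sets $\phi_{p_i}^{-1}(B_{\eta_{p_i}})\subset U_{p_i}$ are pairwise disjoint, lie in $\text{Int}(S)$, and contain no critical point of $f$ other than $p_i$.

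Set $V := S\setminus \bigcup_i \phi_{p_i}^{-1}\bigl(\text{Int}(B_{\eta_{p_i}})\bigr)$, a closed and hence compact subset of $S$ that contains no critical points of $f$. Continuity of $\nabla f$ together with compactness of $V$ provide a constant $c>0$ with $|\nabla f|\ge c$ on $V$. The uniform convergence hypothesis then supplies an $N$ such that $|\nabla f_n - \nabla f| < c$ on $S$ for all $n\ge N$, whereupon the reverse triangle inequality gives $|\nabla f_n| > 0$ throughout $V$. Since $S\setminus \bigcup_p \phi_p^{-1}(B_{\eta_p})\subseteq V$, no critical point of $f_n$ can lie in the stated complement, which is exactly the claim.

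The only real obstacle is technical rather than conceptual: making the gradient-norm bounds compatible between the chart-wise representation of $\nabla f$ and the uniform vector-valued norm $|\cdot|$ that the paper has fixed at the start of Section~\ref{sec:notation}. Since $S$ is compact it admits a finite atlas, so the bound $|\nabla f|\ge c$ can be assembled by patching the chart-wise estimates together, after which the argument proceeds exactly as above.
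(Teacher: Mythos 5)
Your argument is essentially the paper's own proof: set $V$ equal to the compact complement of the removed chart-ball preimages, use compactness of $V$ and the absence of critical points of $f$ there to extract a positive lower bound $\epsilon$ on $|\nabla f|$, and transfer this to $\nabla f_n$ via the uniform convergence hypothesis and the reverse triangle inequality. The only deviation is that you read isolation and finiteness of $Z(\nabla f)$ into the statement and impose disjointness and single-critical-point conditions on the neighborhoods, none of which the lemma assumes or your core argument actually uses (the paper invokes isolation only in the remark following the lemma), so those preliminary reductions can simply be dropped.
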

\end{mdframed}
\begin{remark}
    The below proof follows the same strategy as that of Lemma A.1 of the supplement to \cite{Cheng2017} and Theorem 3.2 of \cite{davenport2022confidenceregionslocationpeaks}. 
\end{remark}
\begin{proof}
    For each critical point $p$, chose $\eta_p>0$ such that $B_{\eta_p}\subset \phi_p(U_p)$. As the set $V:=S\setminus \cup_p \text{Int}(\phi_p^{-1}(B_{\eta_p}))$ is a closed subset of the compact space $S$, it is compact. As it is compact and $\nabla f$ is continuous, $|\nabla f|$ attains it's infinimum on $V$. If $\inf_{s \in V}|\nabla f(s)|=0$ then $f$ would have a critical point outside of $V$, contradicting the definition of $V$. It follows that $\inf_{s \in V}|\nabla f(s)|> \epsilon$ for some $\epsilon > 0$. By the uniform convergence of $\nabla f_n\xrightarrow{C^1} \nabla f$, for $n$ large enough, $|\nabla f_n-\nabla f|<\frac{\epsilon}{2}$ and thus $\inf_{s \in V}|\nabla f_n(s)|> \frac{\epsilon}{2}$. The result now follows.
\end{proof}
    Note that, if the critical points of $f$ are isolated, then $\{\eta_p\}_p$ can be chosen so that $\cup_p\phi^{-1}(B_{\eta_p})$ is diffeomorphic to a disjoint collection of balls in $\mathbb{R}^D$, each of which only contains a single critical point of $f$. By compactness and disjointness, it follows that this union must be finite (i.e. $f$ has finitely many critical points). In other words, the above lemma tells us that if we wish to show that the number of critical points of $f_n$ converge to those of $f$, we need only consider the behavior of $f_n$ and $f$ on each of a finite union of disjoint compact subsets of $\mathbb{R}^D$.
    
    Consequently, unless otherwise specified, we now restrict our attention to functions $f_n$ and $f$ defined on $S := B_\eta \subset \phi(U_p) \subset \mathbb{R}^D$ for some $\eta > 0$, where $p$ is the only critical point of $f$ within $B_\eta$. \nomenclature{$\eta$}{Radius of $S$ when $S$ is assumed to be a ball} The generalization to the arbitrary compact manifold with boundary case follows easily by considering the pushforwards of $f_n$ and $f$ and performing induction over the finitely many critical points.

Our aim is to show the desired convergences by placing minimal conditions on the functions' derivatives. Specifically, in this section, we explore the consequences of assuming $f_n\xrightarrow{C^1}f$ with a lower bound placed on the distance between critical points of $f_n$. The condition we consider is given explicitly below:
\begin{examplebox}
    \begin{assumption}[Critical Point Resolution - Topological Version]\label{assumption:crit_tplgcl} We say that $f_n$ has \textit{bounded critical point resolution} if, for every convergent sequence of critical points $p_n$ of $f_n$ with limit $p^*$, there exists a neighborhood of $p^*$, $U_{p^*}$, such that for sufficiently large $n$, $U_{p^*}$ contains at most one critical point of $f_n$.
    \end{assumption}
    \vspace{0.3cm}
\end{examplebox}
\begin{remark}
    When $S$ is additionally a metric space, the condition can be reformulated to the below.
\end{remark}

\begin{examplebox}
    \begin{assumption}[Critical Point Resolution - Metric Version]\label{assumption:crit} Suppose that each $f_n$ is differentiable and $S$ is a metric space. We say that the sequence $\{f_n\}$ has \textit{bounded critical point resolution} if the following condition holds:\nomenclature{$R(\{ f_n\})$}{Critical point resolution of the sequence $\{f_n\}$}\\
    \begin{equation}\nonumber
    \quad R(\{ f_n\}):=\liminf_{n\rightarrow\infty}\inf_{\substack{p_1,p_2\in Z(\nabla f_n)\\ p_1\neq p_2}}|p_1-p_2|>0. 
    \end{equation}
    \end{assumption}
    \vspace{0.3cm}
\end{examplebox}
\begin{remark}
    Informally, the above states that as $n$ grows large, the distance between critical points (i.e. the `resolution', $R$) of $f_n$ is bounded below. It is worth highlighting that, whilst all other conditions considered in this work have considered the regularity of both $\{f_n\}$ and $f$, this approach has the advantage of being given purely in terms of the behavior of $\{f_n\}$.
\end{remark}

In light of Lemma \ref{lemma:mfld_to_R}, we take Assumption \ref{assumption:crit} to hold throughout the remainder of this section. This is possible as it can easily be shown that if Assumption \ref{assumption:crit_tplgcl} holds for an arbitrary compact manifold with boundary $S$, then the metric variant holds on the finite disjoint union $\cup_p \phi_p(U_p)$. To illustrate why we might be interested in bounding the critical point resolution of $f_n$ consider the following example.
\begin{examplebox}
    \begin{example}\label{example:example6}
        Let $f_n:S\rightarrow \mathbb{R}$ be Morse and $f:S\rightarrow \mathbb{R}$ be Morse. Suppose that $f_n\xrightarrow{C^1}f$. Then it is still possible that $N_M(f_n)$ does not converge to $N_M(f)$.
        
        {\centering
            \includegraphics[width=0.5\textwidth]{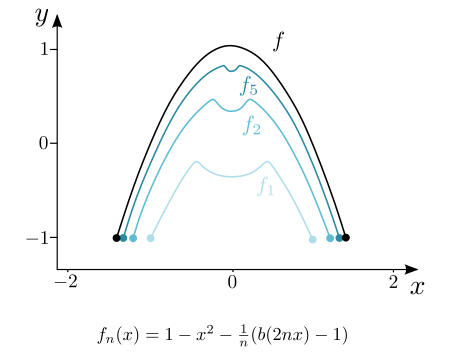}
            \captionof{figure}{A sequence of functions $f_n\xrightarrow{C^1}f$ such that all functions are Morse, but every $f_n$ has two local maxima, whilst the limit $f$, given by $f(x)=1-x^2$, has only one. Here, the smooth bump function $b$ is defined as in Example \ref{example:singlemax}. In this case, it is clear that $R(\{ f_n\})=0$ and thus Assumption \ref{assumption:crit} is not satisfied.}
            \label{fig:example6}}
    \end{example}
\end{examplebox}

As shown by Example \ref{example:example6}, $C^1$ convergence is not sufficient to guarantee that the number of critical points converges. However, under the assumption of bounded critical point resolution, the number of critical points of $f$ does act as a limiting upper bound for that of $f_n$. This statement is made explicit below:
\begin{mdframed}
    \begin{thm}[Critical Point Upper Bound]\label{lem:upper_bdd}
        Suppose that $S$ is a compact $C^1$ manifold with boundary, $f_n\xrightarrow{C^1}f$ where $f$ has isolated critical points and Assumptions \ref{assump:no_bdry_crits} and \ref{assumption:crit} hold. 
    Then: 
    \begin{equation}\nonumber
        \limsup_{n\rightarrow\infty}N_C(f_n)\leq N_C(f)
    \end{equation}
    \end{thm}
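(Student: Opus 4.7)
The plan is to combine Lemma \ref{lemma:mfld_to_R} with Assumption \ref{assumption:crit} to trap all critical points of $f_n$ (for large $n$) inside small neighborhoods of the critical points of $f$, and then to argue via the resolution bound that each such neighborhood can hold at most one $f_n$-critical point. First I would record that, by compactness of $S$ together with the isolation hypothesis, $K := N_C(f) < \infty$; enumerate $Z(\nabla f) = \{p_1, \ldots, p_K\}$. Assumption \ref{assump:no_bdry_crits} and the remark following it guarantee that all $p_i \in \mathrm{Int}(S)$ and that, for $n$ large, every critical point of $f_n$ also lies in $\mathrm{Int}(S)$, so we may work inside charts $(U_{p_i}, \phi_{p_i})$ without losing any critical points to the boundary.

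Next, I would invoke Assumption \ref{assumption:crit} (passed through the charts as indicated in the preamble to the section) to obtain $\rho := \tfrac{1}{2}R(\{f_n\}) > 0$ such that, for all sufficiently large $n$, any two distinct elements of $Z(\nabla f_n)$ lying in a common chart image are separated (in the Euclidean metric on that chart) by more than $\rho$. With this $\rho$ in hand, I would fix radii $\eta_i \in (0,\rho/2)$ small enough that the pullback balls $\phi_{p_i}^{-1}(B_{\eta_i})$ are pairwise disjoint and contained in $U_{p_i}$. Lemma \ref{lemma:mfld_to_R} then supplies a threshold $N_1$ such that, for $n \geq N_1$, every critical point of $f_n$ lies in $\bigcup_{i=1}^K \phi_{p_i}^{-1}(B_{\eta_i})$.

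The final step is purely combinatorial: each $B_{\eta_i}$ has diameter strictly less than $\rho$, so the resolution bound forces at most one element of $Z(\nabla f_n)$ into each such ball once $n$ exceeds the larger of $N_1$ and the threshold provided by Assumption \ref{assumption:crit}. Summing over $i$ yields $N_C(f_n) \leq K$ for all sufficiently large $n$, and taking $\limsup$ gives $\limsup_{n \to \infty} N_C(f_n) \leq N_C(f)$.

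The main obstacle, such as it is, is one of bookkeeping rather than substance: the two ``for $n$ large'' thresholds (coming from Lemma \ref{lemma:mfld_to_R} and from Assumption \ref{assumption:crit}) must be compatible with a single choice of radii $\eta_i$. The order of quantifiers is the delicate point — the $\eta_i$ must be chosen \emph{before} invoking Lemma \ref{lemma:mfld_to_R} but \emph{after} extracting $\rho$ from Assumption \ref{assumption:crit} — but because $R(\{f_n\})$ is an intrinsic property of the sequence $\{f_n\}$ that does not depend on how $S$ is partitioned, this ordering is legitimate. A minor care point is ensuring the Euclidean metric on the chart images agrees with whatever metric on $S$ is implicit in Assumption \ref{assumption:crit}; this is addressed by the observation (already noted in the section) that the metric version of the resolution condition can be obtained from the topological version \ref{assumption:crit_tplgcl} by passing to the disjoint union of chart images.
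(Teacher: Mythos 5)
Your proposal is correct and follows essentially the same route as the paper's proof: combine the separation guaranteed by Assumption \ref{assumption:crit} with the localization argument of Lemma \ref{lemma:mfld_to_R} (choosing the radii smaller than half the resolution before taking $n$ large), then pigeonhole one $f_n$-critical point per ball around each of the finitely many critical points of $f$. Your explicit attention to the quantifier ordering and the chart-metric compatibility merely spells out details the paper compresses into ``by identical logic to the proof of Lemma \ref{lemma:mfld_to_R}.''
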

    \vspace{0.3cm}
\end{mdframed}

\begin{proof}
    By Assumption \ref{assumption:crit}, we can find $N \in \mathbb{N}$ and $\epsilon > 0$ such that if $n \geq N$, all critical points of $f_n$ are at least $\epsilon$ distance apart. By identical logic to that employed in the proof of Lemma \ref{lemma:mfld_to_R}, it also follows that if $N$ is large enough then, for $n\geq N$, all critical points of $f_n$ are within distance $\epsilon/2$ of a critical point of $f$. Thus, inside each $\epsilon/2$-ball around a critical point of $f$, there can be at most one critical point of $f_n$, and outside the union of these balls there are no critical points of $f_n$. Thus, $N_C(f_n)\leq N_C(f)$. Applying limit suprema we see that $\limsup_{n\rightarrow\infty}N_C(f_n)\leq N_C(f)$. 
\end{proof}
Although the above lemma shows that under the given conditions the number of critical points is bounded, it says nothing about the types of critical points under consideration. It might be hoped that, in this setting, something may be said about the convergence of the homological or Morse indices, whenever they are defined. However, as the following examples illustrate, even if $f_n$ and $f$ are $C^2$, $f_n\xrightarrow{C^1} f$ and there is a one-to-one relationship between their critical points, unusual behavior can arise for the Morse index.

\begin{examplebox}
    \begin{example}
        Let $f_n:S\rightarrow \mathbb{R}$ be Morse and $f:S\rightarrow \mathbb{R}$ be Morse. Suppose that $f_n\xrightarrow{C^1}f$, each $f_n$ has a single critical point, $p_n$, and $f$ has a single critical point $p$. Then it is still possible that $H_n(p_n)$ does not converge to $H(p)$.
    \end{example}
{\centering
    \includegraphics[width=\textwidth]{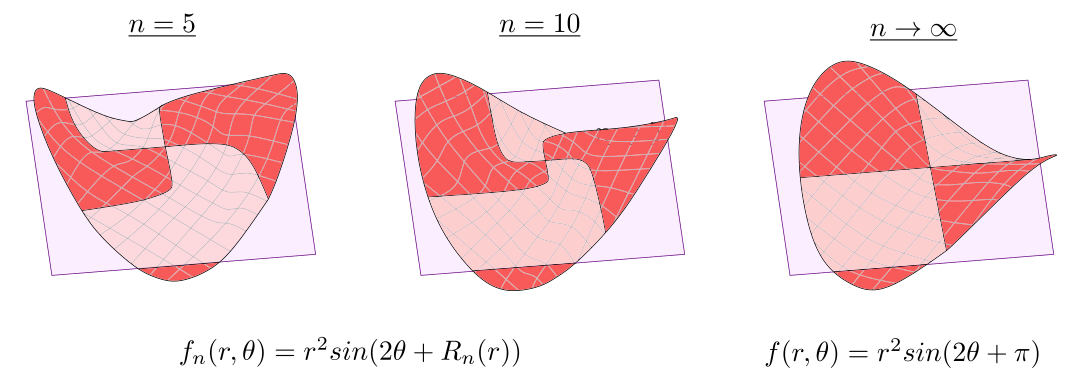}
    \captionof{figure}{A sequence of functions $f_n\xrightarrow{C^1}f$ such that each function is Morse and has a single isolated critical point at the origin, but the Hessians do not converge. In fact, for all vectors $\vec{v}$, if $f$ is concave in the direction of $\vec{v}$ at the origin, then $f_n$ is convex in that direction, and vice versa. In this example, the function $R_n:\mathbb{R}_{\geq 0}\rightarrow \mathbb{R}$, which affects a rotation of the surface, is given by $R_n(x):=\pi \text{exp}(\frac{-1}{nx-1})$ if $x> \frac{1}{n}$ and $0$ otherwise.}
    \label{fig:twisted_saddle}}
\end{examplebox}

\begin{examplebox}
    \begin{example}\label{ex:peano}
        Let $f_n:S\rightarrow \mathbb{R}$ be Morse and $f:S\rightarrow \mathbb{R}$ be $C^2$. Suppose that $f_n\xrightarrow{C^2}f$, each $f_n$ has a single critical point, $p_n$, and $f$ has a single critical point $p$. It is still possible that $f$ is not Morse and that the number of negative eigenvalues of $H_n$ does not converge to that of $H$.
        
{\centering
    \includegraphics[width=\textwidth]{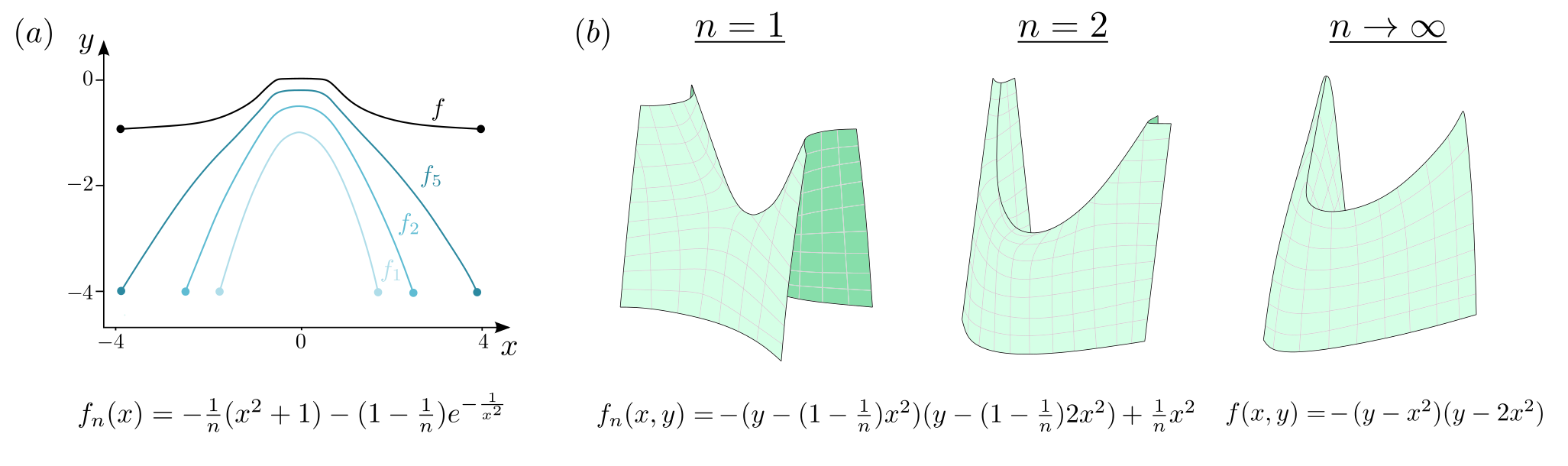}
    \captionof{figure}{Two sequences of functions $f_n\xrightarrow{C^2}f$ such that $f$ possesses a degenerate critical point at the origin, and is thus not Morse but, for all $n$, $f_n$ is Morse. In panel $(a)$, $H_n$ has a single positive eigenvalue for all $n$ (i.e. $f_n''>0$), but as the limit is given by $f(x):=-e^{-\frac{1}{x^2}}$, we have that $H=[0]$ and thus has no positive eigenvalues. Similarly in panel $(b)$, $H_n$ has one negative and one positive eigenvalue for all $n$, but the limit has no non-zero eigenvalues. In this case, $f$ is the infamous Peano surface, which has negative curvature along every straight path from the origin, but is not a local maxima.}
    \label{fig:peanoplusex4}}
    \end{example}
\end{examplebox}

It is clear from the above examples that even if we assume that the number of critical points of $f_n$ equals that of $f$, both $f_n$ and $f$ are $C^2$ and that $f_n\xrightarrow{C^2}f$, this is not sufficient to guarantee that the Morse indices of critical points of $f_n$ provide information about those of $f$. However, it may still be hoped that, given $f_n\xrightarrow{C^1}f$, only mild additional assumptions are required in order to make statements about the maxima, minima and saddle points of $f_n$ and $f$. It turns out that the condition required is Assumption \ref{assumption:crit}, the consequences of which we explore in the following theorems. 
\begin{mdframed}
    \begin{thm}[Homological Index Convergence]\label{thm:homological_index}
    Suppose that $S$ is a compact $C^1$ manifold with boundary, $f_n\xrightarrow{C^1}f$ where $f$ has isolated critical points and Assumptions \ref{assump:no_bdry_crits} and \ref{assumption:crit} hold. Then, for any $\lambda\neq 0$, $N^H_\lambda(f_n)\rightarrow N^H_\lambda(f)$ as $n\rightarrow\infty$, while $\limsup_{n\rightarrow \infty} N^H_0(f_n)= N^H_0(f)$.
    \end{thm}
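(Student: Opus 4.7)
The plan is to localize around each critical point of $f$ and combine the homotopy invariance of topological degree with the local form of the Poincar\'{e}--Hopf theorem. Invoking Lemma \ref{lemma:mfld_to_R} together with Assumption \ref{assumption:crit}, I would first fix around each isolated critical point $p$ of $f$ a closed ball $B_{\delta_p}(p) \subset \text{Int}(S)$ with radius $\delta_p$ chosen small enough that (i) the collection $\{B_{\delta_p}(p)\}_p$ is pairwise disjoint, (ii) each $B_{\delta_p}(p)$ contains no critical points of $f$ other than $p$, (iii) $2\delta_p < R(\{f_n\})$ so that each $B_{\delta_p}(p)$ contains at most one critical point of $f_n$ for $n$ large, and (iv) all critical points of $f_n$ eventually lie inside $\bigcup_p B_{\delta_p}(p)$.

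The next step is a homotopy argument at each ball. Since $p$ is the only critical point of $f$ in $B_{\delta_p}(p)$, the quantity $c_p := \inf_{s \in \partial B_{\delta_p}(p)} |\nabla f(s)|$ is strictly positive. By $C^1$ convergence $\sup_S |\nabla f_n - \nabla f| \to 0$, so for $n$ large, on $\partial B_{\delta_p}(p)$ we have both $|\nabla f_n| > c_p/2$ and $|(1-t)\nabla f + t \nabla f_n| > c_p/2$ for every $t \in [0,1]$. Homotopy invariance of the degree then yields
\begin{equation}\nonumber
\deg\bigl(\nabla f_n / |\nabla f_n|\bigr) \;=\; \deg\bigl(\nabla f / |\nabla f|\bigr) \quad \text{on } \partial B_{\delta_p}(p).
\end{equation}

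I would then invoke the local form of Poincar\'{e}--Hopf (cf.\ Definition \ref{def:homological_index_mfld} and Appendix \ref{app:ph}): the degree of the normalized gradient on $\partial B_{\delta_p}(p)$ equals the sum of homological indices of the zeros of the gradient contained in the interior of $B_{\delta_p}(p)$. Applied to $f$, this sum is simply $\text{Ind}^H(\nabla f, p)$; applied to $f_n$, it is the sum of indices of critical points of $f_n$ in $B_{\delta_p}(p)$, which by (iii) has at most one summand. Consequently, for $n$ large, if $\text{Ind}^H(\nabla f, p) = \lambda \neq 0$ there must be exactly one critical point of $f_n$ in $B_{\delta_p}(p)$ and it has homological index $\lambda$; while if $\text{Ind}^H(\nabla f, p) = 0$ there are zero or one critical points of $f_n$ in $B_{\delta_p}(p)$, and any such point must itself have index zero.

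Summing over the finite collection of critical points of $f$ and using (iv) to rule out any extra critical points of $f_n$ outside the balls, for $\lambda \neq 0$ one obtains $N^H_\lambda(f_n) = N^H_\lambda(f)$ for all sufficiently large $n$, giving convergence; while for $\lambda = 0$ one obtains $N^H_0(f_n) \leq N^H_0(f)$, yielding the stated $\limsup$ bound. The main obstacle is the careful ordering of quantifiers in the first step: the radii $\delta_p$ must be small enough to isolate each critical point and to fit strictly inside the eventual resolution $R(\{f_n\})$, yet the choice of $n$ large enough that $\nabla f_n$ is close to $\nabla f$ and that all critical points of $f_n$ are captured in the $\bigcup_p B_{\delta_p}(p)$ depends on the (already fixed) $\delta_p$. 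Once this ordering is set up cleanly, the remainder is a direct application of homotopy invariance of the degree together with the local Poincar\'{e}--Hopf identity of \cite{jubin2009generalized}.
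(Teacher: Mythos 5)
Your argument is correct and reaches the same conclusions, but by a genuinely different (and more elementary) route than the paper. After the same localization via Lemma \ref{lemma:mfld_to_R}, Assumption \ref{assumption:crit}, and the finiteness of the critical set of $f$, the paper does not homotope the gradients directly: it glues $f_n$ to $f$ with a partition of unity on an annulus inside $B_{4\epsilon}(p)$, checks that the glued function $\tilde f_n$ has no critical points in the transition region, and then applies the generalized Poincar\'e--Hopf theorem of \cite{jubin2009generalized} (Theorem \ref{thm:poincarehopf}) on the ball, choosing a collar supported where $\tilde f_n \equiv f$ so that the boundary indices of $\nabla \tilde f_n$ and $\nabla f$ coincide and hence the interior indices coincide. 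You instead compare $\deg(\nabla f/|\nabla f|)$ and $\deg(\nabla f_n/|\nabla f_n|)$ on $\partial B_{\delta_p}(p)$ via the straight-line homotopy $(1-t)\nabla f + t\nabla f_n$, kept nonvanishing by your $c_p/2$ bound, and then invoke the standard identity that this boundary degree equals the sum of the interior indices (additivity/excision for the Brouwer degree, consistent with Definition \ref{def:homological_index_mfld}). Your route avoids both the partition-of-unity construction and the boundary-index/collar machinery, at the cost of using a boundary-sphere-equals-interior-sum identity that the paper never states explicitly (though it is standard and covered by its cited references); the paper's route buys consistency with its own framework, reusing Theorem \ref{thm:poincarehopf}, which it needs anyway to handle boundary behavior elsewhere. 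The quantifier ordering you worry about is resolved the same way in the paper: radii are fixed from $R(\{f_n\})$ and the isolation of the critical points of $f$, and only then is $n$ taken large. One remark: for $\lambda=0$ your argument yields $N^H_0(f_n)\le N^H_0(f)$ eventually, hence only $\limsup_{n\rightarrow\infty} N^H_0(f_n)\le N^H_0(f)$; this is also all that the paper's own proof establishes, despite the equality in the statement, so you lose nothing relative to the paper on that point.
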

    \vspace{0.3cm}
\end{mdframed}

    \begin{proof}
    Without loss of generality, assume $p$ is the critical point of $f$, $S=B_\eta(p)$ and $\epsilon<\max(R(\{ f_n\}),\eta)/4$. Then, by the definition of critical point resolution and Assumption \ref{assumption:crit} (c.f. Assumption \ref{assumption:crit_tplgcl} also, which is equivalent in this setting), eventually $B_{4\epsilon}(p)$ contains at most one critical point of $f_n$. By restricting attention to the compact manifold with boundary $B:=B_{4\epsilon}(p)$, we now define a partition of unity $\{\phi_1,\phi_2\}$ on the open cover $\{\text{Int}(B_{3\epsilon}(p)), B\setminus B_{2\epsilon}(p)\}$ (see Fig. \ref{fig:hom_index_proof}). Note that by construction, $B\setminus B_{2\epsilon}(p)$ contains no critical points of $f$. 
    
    We now define $M:=\sup_{s\in B}(\max(|\nabla\phi_1(s)|,1))$. By the definition of uniform convergence, we can assume that $n$ is large enough that:
    \begin{equation}\nonumber
        \max\bigg(||f-f_n||_\infty,||\nabla f_n-\nabla f||_\infty\bigg) <\delta:= \frac{1}{3M} \inf_{s \in B\setminus B_{2\epsilon}(p)}|\nabla f(s)|
    \end{equation}
    It is immediate that, for $s \in B\setminus B_{2\epsilon}(p)$, we have
    \begin{equation}\nonumber
    \begin{aligned}
        |\nabla f_n(s)| & \geq \big||\nabla f_n(s)-\nabla f(s)|-|\nabla f(s)|\big| \\
        & > \bigg(1-\frac{1}{3M}\bigg)|\nabla f(s)| \\
        & > 0,
    \end{aligned}
    \end{equation}
    where the first inequality follows by the reverse triangle inequality and the second by the definition of $\delta$. Thus, $f_n$ has no critical points in $B\setminus B_{2\epsilon}(p)$.

    Next, we let $\tilde{f}_n=\phi_1 \cdot f_n + \phi_2 \cdot f$, noting that $\tilde{f}_n= f_n$ on $B_{2\epsilon}(p)$ and $\tilde{f}_n= f$ on $\overline{B\setminus B_{3\epsilon}(p)}$. By the definition of partition of unity, it trivially follows that
    \begin{equation}\nonumber
        \nabla \tilde{f}_n = \nabla \bigg(f + \phi_1 \cdot (f_n-f)\bigg)= \nabla f + \nabla \phi_1 \cdot (f_n-f) + \phi_1 \cdot \nabla (f_n-f).
    \end{equation}
    And thus, by the reverse triangle inequality, for $s \in B\setminus B_{2\epsilon}(p)$;
    \begin{equation}\nonumber
    \begin{split}
        |\nabla \tilde{f}_n(s)| & \geq |\nabla f(s)| - |\nabla \phi_1(s)|\cdot |f_n(s)-f(s)| - |\phi_1(s)| \cdot |\nabla f_n(s) - \nabla f(s)| \\
        & > |\nabla f(s)| - 2 M\delta \\ 
        & = |\nabla f(s)| - \frac{2}{3} \inf_{s \in B\setminus B_{2\epsilon}(p)}|\nabla f(s)| \\ 
        & > \frac{1}{3}|\nabla f(s)| > 0, \\ 
    \end{split}
    \end{equation}
    
{\begin{floatingfigure}[r]{0.48\textwidth}\centering\vspace{-2cm}
    {\centering\includegraphics[width=0.32\textwidth]{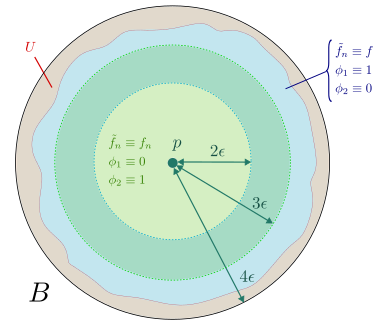}
    \captionof{figure}{The partition of unity used in the proof of Theorem \ref{thm:homological_index}. In the light green region, $\tilde{f}_n=f_n$, while in the blue region $\tilde{f}_n=f$. Also shown in light red, on top of the blue region, is a potential choice of collar, $U$, for $B$.}\label{fig:hom_index_proof}}
\end{floatingfigure}}

\noindent where the second inequality follows from the definitions of $\delta$ and $M$ and the last inequality follows from the fact that $B\setminus B_{2\epsilon}(p)$ contains no critical points of $f$. Thus, $\tilde{f}_n$ has no critical point in $B\setminus B_{2\epsilon}(p)$. Further, as $\tilde{f}_n\equiv f_n$ on $B_{2\epsilon}(p)$ and $2\epsilon < R(\{ f_n\})$, it follows that $B_{2\epsilon}(p)$ contains at most one critical point of $\tilde{f}_n$ which, if existent, is also a critical point of $f_n$ with the same homological index. Applying the generalized Poincar\'{e}-Hopf theorem, Theorem \ref{thm:poincarehopf}, to $B$, we now have that:
\begin{equation}\nonumber
    \text{Ind}^H_\partial(\nabla f) + \text{Ind}^H_\circ(\nabla f) = \text{Ind}^H_\partial(\nabla \tilde{f}_n) + \text{Ind}^H_\circ(\nabla \tilde{f}_n) = \begin{cases}
    1 & \text{if }D \text{ is even,}\\
    0 & \text{if }D \text{ is odd.}
    \end{cases}
\end{equation}
However, by choosing a collar of $B$ with domain $U$ contained in $\overline{B\setminus B_{3\epsilon}(p)}$ and noting that $\tilde{f}_n= f$ on $B\setminus B_{3\epsilon}(p)$, we can see that $\text{Ind}^H_\partial(\nabla \tilde{f}_n)=\text{Ind}^H_\partial(\nabla f)$. Thus, we have $\text{Ind}^H_\circ(\nabla f) = \text{Ind}^H_\circ(\nabla \tilde{f}_n)$.
As $B_{2\epsilon}(p)$ contains exactly one critical point of $f$, namely $p$, we have that $\text{Ind}^H_\circ(\nabla f)=\text{Ind}^H(\nabla f, p)$. Recalling that $B\setminus B_{2\epsilon}(p)$ contains no critical points of $f_n$, we now see that as $B_{2\epsilon}(p)$ contains at most one critical point of $\tilde{f}_n$, which must also be a critical point of $f_n$ with the same index, we have:
\begin{equation}\nonumber
    \text{Ind}^H_\circ(\nabla \tilde{f}_n) := \begin{cases}
        \text{Ind}^H(\nabla f_n, p_n) & \text{if $f_n$ has a critical point $p_n$ in $B_{4\epsilon}(p)$,} \\
        0 & \text{otherwise}.
    \end{cases}
\end{equation}
Suppose it were the case that $p$ is not an undulation point, i.e. $\text{Ind}^H(\nabla f, p)\neq 0$, and $f_n$ has no critical point in $B_{4\epsilon}(p)$. Then we would have:
\begin{equation}\label{eq:hom_index_equality}
    0\neq \text{Ind}^H(\nabla f, p) = \text{Ind}^H_\circ(\nabla f)=\text{Ind}^H_\circ(\nabla \tilde{f}_n) = 0,
\end{equation}
causing a contradiction. Thus, if $p$ is a critical point of homological index $\lambda\neq 0$, then, for $n$ large enough, $f_n$ must also have exactly one critical point $p_n$ which is of homological index $\lambda$. \\
\indent Now suppose that $p$ were an undulation point. By similar reasoning to the above, it follows that $B_{4\epsilon}(p)$ can contain at most one critical point of $f_n$ and that this critical point, if it exists, must have homological index zero. By employing the arguments of Lemma \ref{lemma:mfld_to_R} to generalize to arbitrary manifolds with boundary $S$ and the case in which $f$ has multiple critical points, the result now follows.\\
\end{proof}

The above theorem provides convergence guarantees about the number critical points of a given homological index. However, as the homological index of maxima, saddles and minima can agree with one another in various dimensions, the theorem only provides partial information about the convergence of $N_M, N_m$ and $N_S$. We shall shortly address this issue with Theorem \ref{thm:not_quite_morse}. Before we do so, however, we provide a short example to aid intuition with the proof of Theorem \ref{thm:not_quite_morse}, in which a sequence of isolated local maxima converges to a saddle point, and a lemma to support the proof.

\begin{examplebox}
    \begin{example}\label{example:isolated_crit}
        Let $f_n:S\rightarrow \mathbb{R}$ be Morse and $f:S\rightarrow \mathbb{R}$ be Morse. Suppose that $f_n\xrightarrow{C^2}f$ and each $f_n$ has a single isolated local maxima, $p_n$. Then it is still possible that the limit of $p_n$ is not a local maxima of $f$, but is instead a saddle point.
        
{\centering
    \includegraphics[width=0.85\textwidth]{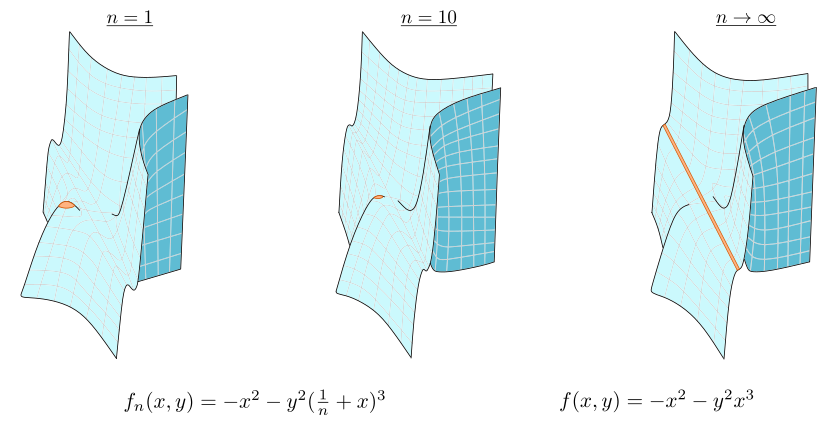}
    \captionof{figure}{ A sequence of functions $f_n\xrightarrow{C^2} f$ such that each $f_n$ has a single critical point, which is a local maxima, but the limit of these critical points is a saddle point of $f$. Highlighted in orange are the critical points of each function. Note that, while each $f_n$ has a single isolated critical point, $f$ has a single saddle point lying in the center of a continuous line of undulation points.}
    \label{fig:isolated_crit}}

    \end{example}
\end{examplebox}
A few points are worth noting about Example \ref{example:isolated_crit}. Firstly, it shows that an assumption of isolated critical points of $f_n$ does not help ensure critical point convergence. Second, the functions $\{f_n\}$ are classic illustrations of the fact that an isolated local maxima need not be a global maxima, even in the case that no other critical points exist. Thirdly, it is of interest to note that the interior homological index of each local maxima is $1$, but the interior homological index of the limiting saddle is $-1$ and the undulation points have index $0$. This discordance illustrates the care that must be taken when computing the homological index, as Definitions \ref{def:homological_index_mfld} and \ref{def:homological_index} only apply when the critical points of $f$ are isolated. 

Finally and most importantly, we note that, for each $n$ the edge of the surface looks approximately like a cubic polynomial. Specifically, the restriction of each $f_n$ to the boundary has a local minima and maxima, near to the front of the image. At these critical points, it follows that the (unrestricted) gradient of $f$ must be directly perpendicular to the boundary of the image (i.e. it points outwards for the minima, and inwards for the maxima). This is a fact which we shall exploit to reach a contradiction.

To construct the desired contradiction, we first need one more lemma. Suppose that $f\in C^1(S,\mathbb{R})$ has a single isolated critical point $p$ with $f(p)=c$. Ideally, for what follows, we would like to show that $f$ must satisfy the following condition: for any $\epsilon>0$, there exists a $\delta\in (0,\epsilon)$ such that for all $s \in \partial B_{\delta}(p) \cap f^{-1}(c)$:
\begin{equation}\label{eq:condition}
    \nabla f(s) \quad \text { and } \quad |s-p| \quad \text{are not collinear}.
\end{equation}
In other words, we would like for $\partial B_{\delta}(p)$ and $f^{-1}(c)$ to intersect \textit{transversally} (see Appendix \ref{app:transversal} for further detail and discussion of transversality). This seemingly innocuous condition holds for most practical examples of $C^1$ functions (c.f. Example \ref{example:crinkled}). However, as shown in Supplementary Material Section \ref{supp:cantor}, it is possible to construct unexpected fractal-like functions for which this condition does not hold.

\begin{examplebox}
    \begin{example}\label{example:crinkled}
Unusual examples of functions for which Condition \eqref{eq:condition} is satisfied despite exhibiting unexpected behavior. Specifically in Fig \ref{fig:crinkled} Panel $(a)$ the set $\mathcal{K}:=\{s\in f^{-1}(c):\nabla f(s) \propto |s-p|\}$ contains a circular arc, and in Panel $(b)$ $\mathcal{K}$ intersects $B_\epsilon(p)$ for countably infinitely many $\epsilon>0$. In both such cases, however, for any choice of $\epsilon > 0$, it is possible to choose a $\delta \in (0,\epsilon)$ such that the condition is satisfied.

{\centering
    \includegraphics[width=0.8\textwidth]{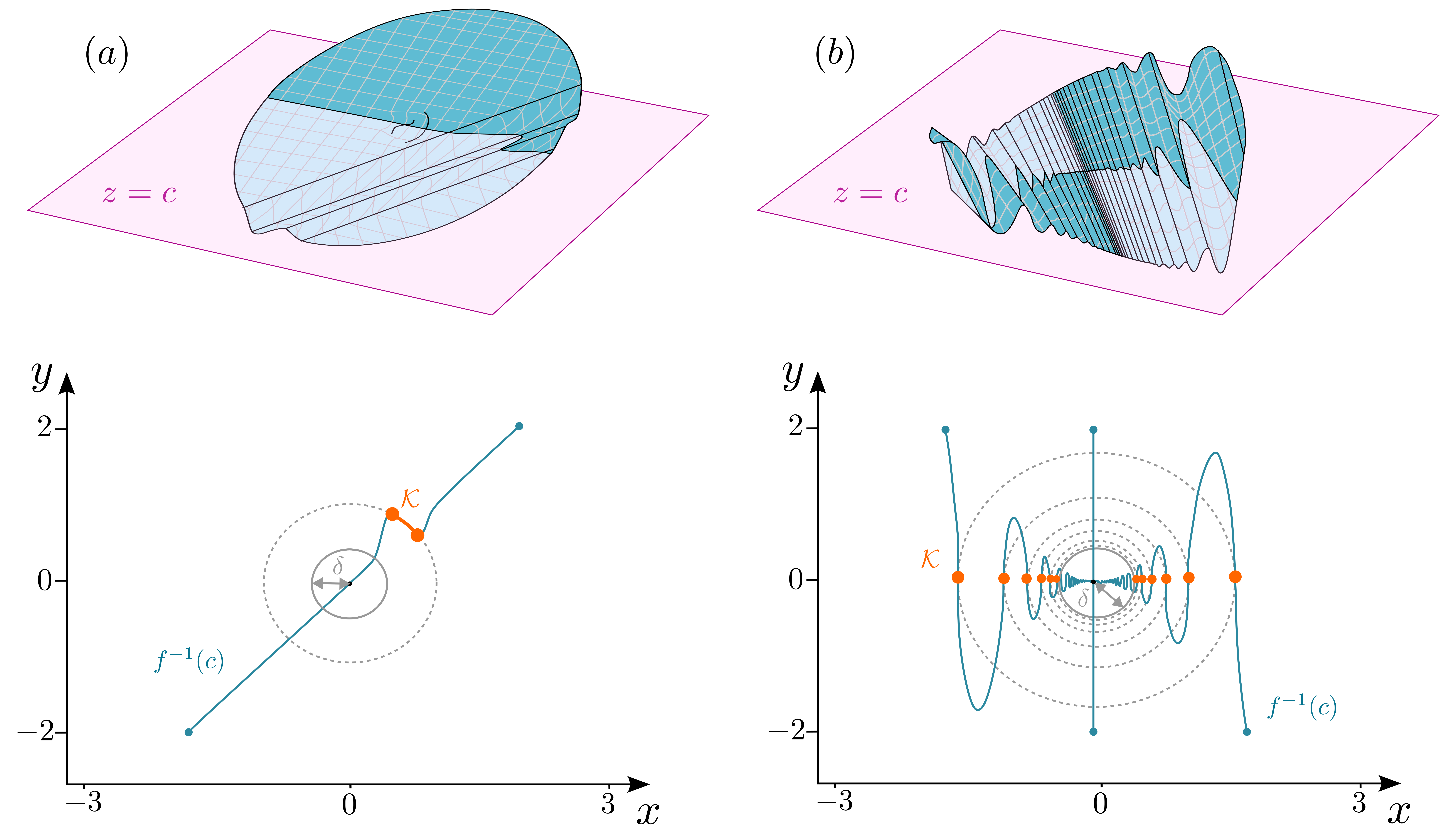}
    \captionof{figure}{Surface plots for two functions (top), alongside top down views of the contour lines $f^{-1}(c)$ (bottom). In (a), $f$ has a single isolated undulation point at the origin, and $\mathcal{K}$ consists of a single connected component. In (b) there is a single isolated saddle at the origin, surrounded by fractal-like oscillations and $\mathcal{K}$ consists of many isolated points. Equations for these surface plots are given in Supplementary Material Section \ref{supp:crinkled}. }
    \label{fig:crinkled}}
        
    \end{example}
\end{examplebox}
To overcome this issue, we make use of the following lemma. In words, this says that if $f_n\rightarrow f$ and $f$ does not satisfy condition \eqref{eq:condition}, we can construct functions $\tilde{f}_n\rightarrow \tilde{f}$ such that $\tilde{f}$ and $f$ and, for $n$ large enough, $\tilde{f}_n$ and $f_n$ have identical critical points, but $\tilde{f}$ does satisfy condition \eqref{eq:condition}.

\begin{mdframed}
    \begin{lemma}[Transversal Adjustment Lemma]\label{lem:transversal} Suppose $f_n\xrightarrow{C^1}f$ and $f$ has a single isolated critical point, $p$, and let $\epsilon > 0$. Then there exists $\epsilon > \delta > \gamma > 0$ and $C^1$ functions $\tilde{f}_n,\tilde{f}:B_\epsilon(p)\rightarrow \mathbb{R}$ such that the following conditions hold:
    \begin{itemize}
        \item[(i)] $\delta$ and $\tilde{f}$ satisfy Condition \eqref{eq:condition}.
        \item[(ii)] The restrictions of $\tilde{f}$ and $\tilde{f}_n$ to $B_{\gamma}(p)$ agree with those of $f$ and $f_n$.
        \item[(iii)] For sufficiently large $n$, $f_n$ and $\tilde{f}_n$ have no critical points in $B_\epsilon(p)\setminus \text{Int}(B_\gamma(p))$.
    \end{itemize}  
    In such settings, we shall call $\tilde{f}$ and $\tilde{f}_n$ transversal adjustments of $f$ and $f_n$, respectively.\end{lemma}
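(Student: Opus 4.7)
The plan is to regularize $f$ on an outer annulus so that Sard's theorem becomes applicable to the perturbed level set, then locate the transversal radius $\delta$ there. Fix any $\gamma_0\in(0,\epsilon/4)$; since $p$ is the sole critical point of $f$ in $B_\epsilon(p)$, the quantity $\mu:=\inf_{B_\epsilon(p)\setminus B_{\gamma_0}(p)}|\nabla f|$ is strictly positive. Using a Whitney extension followed by mollification, pick a $C^\infty$ function $g:B_\epsilon(p)\to\mathbb{R}$ with $\|f-g\|_{C^1}<\kappa$ for some small $\kappa$ to be chosen, and a smooth radial cutoff $\chi$ with $\chi\equiv 1$ on $B_{\gamma_0}(p)$ and $\chi\equiv 0$ outside $B_{2\gamma_0}(p)$. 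Setting $\gamma:=\gamma_0$, $\tilde{f}:=\chi f+(1-\chi)g$, and $\tilde{f}_n:=\chi f_n+(1-\chi)g$, condition (ii) is immediate from $\chi\equiv 1$ on $B_\gamma(p)$.

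For (iii), expand $\nabla\tilde f=\nabla f+\nabla\chi(f-g)+(1-\chi)(\nabla g-\nabla f)$ and apply the reverse triangle inequality to obtain $|\nabla\tilde f|\ge \mu-(1+\sup|\nabla\chi|)\kappa$ on the blend annulus $B_{2\gamma}(p)\setminus B_\gamma(p)$, and $|\nabla \tilde f|\ge \mu-\kappa$ on the outer region $B_\epsilon(p)\setminus B_{2\gamma}(p)$. Choosing $\kappa$ small enough relative to $\mu$ removes all critical points of $\tilde f$ on the annulus; the same bounds with an additional $\|f_n-f\|_{C^1}$ error term handle $\tilde f_n$ once $n$ is large, and the analogous statement for $f_n$ itself is immediate from the $C^1$ convergence together with $\mu>0$.

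For (i), set $c:=f(p)=\tilde f(p)$. On $B_\epsilon(p)\setminus B_{2\gamma}(p)$ we have $\tilde f\equiv g$ with $|\nabla g|>0$, so $N:=g^{-1}(c)\cap(B_\epsilon(p)\setminus B_{2\gamma}(p))$ is a smooth $(D-1)$-dimensional submanifold of $\mathbb{R}^D$. Apply Sard's theorem to the $C^\infty$ map $\psi:N\to\mathbb{R}$, $\psi(s):=|s-p|^2$: its set of critical values has Lebesgue measure zero, so one may select $\delta\in(2\gamma,\epsilon)$ with $\delta^2$ a regular value (note that if $N\cap\partial B_\delta(p)$ is empty the condition is vacuous, otherwise regularity applies). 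At every $s\in N\cap\partial B_\delta(p)$ regularity then says that the radial vector $s-p$ is not normal to $T_sN=\ker\nabla g(s)$, i.e. $\nabla\tilde f(s)=\nabla g(s)$ is not collinear with $s-p$, which is exactly Condition \eqref{eq:condition} for $\delta$ and $\tilde f$.

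The main obstacle, and the reason for routing through $g$, is that Sard's theorem is unavailable for a merely $C^1$ function once $D\ge 3$, so the transversal radius cannot be extracted from $f$ directly. Mollification to a $C^\infty$ function $g$ on the outer annulus restores the regularity Sard requires, while the cutoff splices $f$ and $f_n$ back in on $B_\gamma(p)$, preserving their local critical-point data verbatim for use in the forthcoming proof of Theorem \ref{thm:not_quite_morse}; the quantitative choice of $\kappa$ guarantees that this splicing does not spawn extraneous critical points in the blend.
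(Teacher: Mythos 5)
Your proof is correct and follows essentially the same strategy as the paper's: smooth the function away from $p$, splice $f$ and $f_n$ back in near $p$ with a cutoff so that (ii) and (iii) follow from quantitative gradient lower bounds on the annulus, and then select the radius $\delta$ by a genericity argument. The only (harmless) variations are that you blend $f_n$ with the single smoothing $g$ of $f$ rather than with a mollification of $f_n$ itself, and that you extract $\delta$ by applying Sard's theorem directly to $s\mapsto|s-p|^2$ on the level set $N$ instead of invoking Thom's transversality theorem (Theorem \ref{thm:thoms_transversal}) with the radius as parameter --- which is the same genericity mechanism the paper's Appendix \ref{app:transversal} reduces to, so both yield Condition \eqref{eq:condition} for the chosen $\delta$.
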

    
    \vspace{0.3cm}
\end{mdframed}

\begin{proof}
    Without loss of generality assume $p$ is the origin. Choose $\epsilon>0$ and let $\gamma=\epsilon/4$ and, making $\epsilon$ smaller if necessary, assume that $B_{\epsilon}$ is connected. As $f$ contains no critical points in $B_{\epsilon}\setminus B_{\gamma}$ by construction, we have $M:=\inf_{s\in B_{\epsilon}\setminus B_{\gamma}} |\nabla f(s)|>0$. Let $\varphi_\eta:B_\eta\rightarrow \mathbb{R}$ be a smooth bump function with non-zero support on $\text{Int}(B_\eta)$ and define $f^\eta:=\varphi_\eta *f$, where $*$ represents convolution over $B_\eta(s)$. Note that for arbitrary $s\in S$:
    \begin{equation}\nonumber 
        \begin{split}
            |\nabla f^\eta(s)-\nabla f(s)| & = \bigg|\int_{B_\eta(s)} \varphi_\eta(x-s)\bigg(\nabla f(x)-\nabla f(s)\bigg)dx\bigg| \\
             & \leq \int_{B_\eta(s)} \varphi_\eta(x-s)\big|\nabla f(x)-\nabla f(s)\big| dx\\
              & \leq \omega_\eta(\nabla f)\int_{B_\eta(s)} \varphi_\eta(x-s) dx=\omega_\eta(\nabla f)\\
        \end{split}
    \end{equation}
    where $\omega_\eta(\nabla f):=\sup_{|x-y|<\eta}|\nabla f(x)-\nabla f(y)|$ is the modulus of continuity of $\nabla f$. Similar logic shows that $|f^\eta(s)-f(s)|\leq \omega_\eta(f)$. Thus, choosing $\eta$ small enough that $2\omega_\eta(f)+\omega_\eta(\nabla f)<M/2$ we obtain for $s \in B_{\epsilon}\setminus B_{\gamma}$:
    \begin{equation}\nonumber
    \begin{split}
        |\nabla f^\eta(s)| & \geq \bigg| |\nabla f(s)| - |\nabla f(s) - \nabla f^\eta(s)|\bigg| \\
        & \geq M-M/2 = M/2 > 0.
    \end{split}
    \end{equation}
    It follows that $f^\eta$ has non-zero gradient on $B_{\epsilon}\setminus B_{\gamma}$. Now define:
    \begin{equation}\nonumber
        \tilde{f}(s):=\begin{cases}
            f(s) & \text{if }|s| \leq\gamma, \\
            \phi(t)f^\eta(s)+\phi(1-t)f(s) & \text{if }|s| = \gamma(1 + t)\text{ for some } t\in[0,1], \\
            f_{\eta}(s) & \text{if }|s|\geq 2\gamma,
        \end{cases}        
    \end{equation}
    where $\phi(t):=t^2/(t^2+(1-t)^2)$ is a $C^1$ transition function ranging from $0$ to $1$. It is easily seen that $\tilde{f}$ is a $C^1$ function. In addition, for $s\in B_{2\gamma}\setminus B_{\gamma}$, we have:
    \begin{equation}\nonumber
        \tilde{f}(s) = f(s) + \phi\bigg(\frac{|s|-\gamma}{\gamma}\bigg)(f^\eta(s)-f(s)) 
    \end{equation}
    Thus;
    \begin{equation}\nonumber
    \begin{split}
        |\nabla \tilde{f}(s) - \nabla f(s)| & \leq ||\nabla \phi||_\infty|f^\eta(s)-f(s)|+||\phi||_\infty|\nabla f^\eta(s)-\nabla f(s)| \\ & \leq 2\omega_\eta(f) + \omega_\eta (\nabla f)<M/2.
    \end{split}
    \end{equation}
    This in turn implies that, for $s\in B_{2\gamma}\setminus B_{\gamma}$:
    \begin{equation}\nonumber
        |\nabla \tilde{f}(s)| \geq \bigg| |\nabla f(s)| - |\nabla \tilde{f}(s)-\nabla f(s)|\bigg| > M/2 > 0.
    \end{equation}
    Thus $\tilde{f}$ has non-zero gradient outside $B_\gamma$.
    
    Now, note that $\partial B_{3\gamma}$ is a $C^\infty$ manifold that is properly contained within $B_{\epsilon}\setminus B_{2\gamma}$ and $\tilde{f}:=f^\eta$ on $B_{\epsilon}\setminus B_{2\gamma}$ and thus $\tilde{f}^{-1}(c)\cap \text{Int}(B_{\epsilon}\setminus B_{2\gamma})$ is a $C^\infty$ manifold. If $\partial B_{3\gamma}$ intersects $\tilde{f}^{-1}(c)$ transversally, then setting $\delta=3\gamma$ yields $(i)$. Otherwise, applying tranversality theorem (c.f. Theorem \ref{thm:thoms_transversal} and the discussion in Appendix \ref{app:transversal}) we can find $\delta$ arbitrarily close to $3\gamma$ such that $B_{\delta}$ transversally intersects $\tilde{f}^{-1}(c)$, thus giving $(i)$.
    
    Finally, define $\tilde{f}_n$ analogously to $\tilde{f}$. It is immediate from the construction that $(ii)$ is satisfied. By Lemma \ref{lemma:mfld_to_R}, we can assume that $n$ is large enough that $f_n$ has non-zero gradient outside of $B_\gamma$. Further we can assume $n$ is large enough that $||\nabla f- \nabla f_n||_\infty < M/4$. Through similar reasoning to the above, it can be shown that $||\nabla f^\eta- \nabla f^\eta_{n}||_\infty < M/4$. It now follows that for $s\in B_{\epsilon}\setminus B_{2\gamma}$, we have:
    \begin{equation}\nonumber
    \begin{split}
            |\nabla \tilde{f}_n(s)| & \geq \bigg| |\nabla \tilde{f}(s)| -  |\nabla \tilde{f}_n(s)-\nabla \tilde{f}(s)|\bigg| \\
            & \geq M/2 - M/4 = M/4 > 0.
    \end{split}
    \end{equation}
    It now follows that condition $(iii)$ holds, completing the proof.
 \end{proof}

Before reading the proof of the following theorem, we recommend the reader be familiar with the statement of the mountain pass theorem given in Appendix \ref{app:mpt} Theorem \ref{thm:mpt} alongside its illustration, Fig. \ref{fig:MPT}.

\begin{mdframed}
    
    \begin{thm}\label{thm:not_quite_morse}
    Suppose that $S$ is a compact $C^1$ manifold with boundary, $f_n\xrightarrow{C^1}f$ where $f$ has isolated critical points and Assumptions \ref{assump:no_bdry_crits} and \ref{assumption:crit} hold. Then, as $n\rightarrow\infty$:
    \begin{equation}\nonumber
        N_m(f_n)\rightarrow N_m(f), \quad N_M(f_n)\rightarrow N_M(f), \quad \text{and}\quad  N_s(f_n)\rightarrow N_s(f).
    \end{equation}
    If $f$ has no points of undulation, then we additionally have that $N_C(f_n)\rightarrow N_C(f)$. Otherwise, $\limsup_{n\rightarrow\infty}N_C(f_n)\leq N_C(f)$.
    \end{thm}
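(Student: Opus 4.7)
The plan is to promote the bijection of critical points by homological index established in Theorem \ref{thm:homological_index} into a bijection respecting classification into local maxima, local minima, and saddles; a per-point value-comparison argument provides this upgrade. Using Lemma \ref{lemma:mfld_to_R}, I would reduce to the local setting in $\mathbb{R}^D$: for each critical point $p$ of $f$ with $\text{Ind}^H(\nabla f, p) = \lambda \neq 0$, Theorem \ref{thm:homological_index} together with Assumption \ref{assumption:crit} give, for $n$ large enough, a unique critical point $p_n$ of $f_n$ in a small ball around $p$, with $p_n \to p$ and $\text{Ind}^H(\nabla f_n, p_n) = \lambda$.

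\textbf{Matching maxima and minima.} Suppose $p$ is a local maximum of $f$. Shrink $\epsilon$ so that $f < f(p)$ on $B_\epsilon(p) \setminus \{p\}$ and so that $B_\epsilon(p)$ contains no other critical points of $f$. Compactness gives $c := f(p) - \sup_{\partial B_\epsilon(p)} f > 0$, and uniform convergence together with $f_n(p_n) \to f(p)$ then forces $f_n(p_n) > \sup_{\partial B_\epsilon(p)} f_n$ for all large $n$. Hence $f_n$ attains its maximum over $B_\epsilon(p)$ at an interior critical point, which by uniqueness must be $p_n$; so $p_n$ is a local maximum of $f_n$. The local minimum case is symmetric.

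\textbf{Matching saddles.} If $p$ is a saddle, it is an isolated critical point that is neither a local max nor a local min. The isolation lets me upgrade ``not a local maximum'' to ``every neighborhood of $p$ contains a point with $f$ strictly exceeding $f(p)$'': if instead $f \leq f(p)$ on the interior of some $B_{\epsilon_0}(p)$, then any $s_0 \neq p$ in this ball with $f(s_0) \geq f(p)$ (which must exist since $p$ is not a local maximum) would in fact satisfy $f(s_0) = f(p)$, so $s_0$ would be an interior improper local maximum of $f$, forcing $\nabla f(s_0) = 0$ and contradicting isolation of $p$ for small $\epsilon_0$. The symmetric statement yields witnesses strictly below $f(p)$. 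Pick $a, b \in B_\epsilon(p)$ with $f(a) > f(p) > f(b)$; uniform convergence and $f_n(p_n) \to f(p)$ give $f_n(a) > f_n(p_n) > f_n(b)$ for $n$ large, so $p_n$ is neither a max nor a min, while $\text{Ind}^H(\nabla f_n, p_n) = \lambda \neq 0$ rules out undulation. Thus $p_n$ is a saddle.

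\textbf{Assembling counts and main obstacle.} Summing the local correspondences over the finitely many non-undulation critical points of $f$ yields $N_M(f_n) \to N_M(f)$, $N_m(f_n) \to N_m(f)$, and $N_S(f_n) \to N_S(f)$. For the final claim, decompose $N_C = N_M + N_m + N_S + N^H_0$; if $f$ has no undulation points then Theorem \ref{thm:homological_index} gives $\limsup N^H_0(f_n) \leq N^H_0(f) = 0$, whence $N_C(f_n) \to N_C(f)$, while the general limsup inequality follows from Theorem \ref{lem:upper_bdd}. The most delicate step is the saddle matching; although the isolation argument above suffices under $C^1$ regularity, if the level set $f^{-1}(f(p))$ behaves pathologically near $p$ (as foreshadowed by Example \ref{example:crinkled}) one can instead appeal to the transversal adjustment of Lemma \ref{lem:transversal} and the variant of the mountain pass theorem in Appendix \ref{app:mpt} to produce the required path-level witnesses, which is presumably why the paper builds up that machinery.
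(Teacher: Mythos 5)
Your reduction, your maxima/minima matching, and your final counting decomposition all track the paper's argument, but the saddle step contains a genuine gap, and it is precisely the step the theorem's machinery exists for. From $a,b\in B_\epsilon(p)$ with $f(a)>f(p)>f(b)$ and uniform convergence you conclude ``$f_n(a)>f_n(p_n)>f_n(b)$ for $n$ large, so $p_n$ is neither a max nor a min.'' This is a non sequitur: local maximality of $p_n$ is a statement about arbitrarily small neighborhoods of $p_n$, while $a$ sits at a fixed distance of order $\epsilon$; nothing prevents $p_n$ from being a strict local maximum with a basin that shrinks as $n\to\infty$, with $f_n$ exceeding $f_n(p_n)$ farther out in the same ball. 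The paper's Example \ref{example:isolated_crit} exhibits exactly this phenomenon: each $f_n$ has a single isolated critical point which is a local maximum but not a maximum over any fixed ball, and these maxima converge to a saddle of the limit (that example has non-isolated limiting critical points, so it does not contradict the theorem, but it does falsify your inference and shows that isolatedness of $p$ must enter through a different mechanism than pointwise value comparison). Note also that homological index cannot rescue you here, since a maximum and a saddle can share the same index once $D\geq 3$, which is exactly why Theorem \ref{thm:homological_index} alone does not settle $N_M$, $N_m$, $N_S$.

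Relatedly, your closing remark inverts the logic of the paper: the mountain pass theorem is not an optional fallback for pathological level sets, it is the core of the saddle argument, and the transversal adjustment of Lemma \ref{lem:transversal} is the device that handles the pathological level sets. The paper argues the contrapositive: if $p_n$ is a local maximum of $f_n$ but $p$ is a saddle of $f$, pick $p'$ near $p$ with $f(p')>f(p)$ and apply the convex-domain mountain pass theorem (Theorem \ref{thm:mpt}) on $B_\delta(p)$, with $\delta$ chosen (after a transversal adjustment if necessary) so that $f^{-1}(f(p))$ meets $\partial B_\delta(p)$ transversally. This produces either a second critical point of $f_n$ in $B_\delta(p)$, contradicting the uniqueness furnished by Assumption \ref{assumption:crit}, or a tangential intersection of $f_n^{-1}(c)$ with $\partial B_\delta(p)$, which passes to the limit to give a tangential intersection of $f^{-1}(c)$ with $\partial B_\delta(p)$, contradicting the choice of $\delta$. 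Some argument of this path-based, global-on-the-ball type is needed; your per-point comparison cannot be repaired by choosing $a$ closer to $p$, because the required witness must lie inside the (possibly shrinking) basin of $p_n$ for each fixed $n$, and uniform convergence gives no control at that scale.
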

    \vspace{0.3cm}
\end{mdframed}
\begin{proof}
    To begin, note that by definition:
    \begin{equation}\nonumber
        N_C(f_n) - N_0^H(f_n) = N_M(f_n) + N_m(f_n) + N_S(f_n) = \sum_{\lambda \neq 0} N_\lambda^H(f_n),
    \end{equation}
    and similarly for $f$. Thus, if $f$ has no undulation points (i.e. $N_0^H(f)=0$), then Theorem \ref{thm:homological_index} trivially implies that $N_C(f_n)\rightarrow N_C(f)$. Furthermore, we also have that:
    \begin{equation}\nonumber
        N_M(f_n) + N_m(f_n) + N_S(f_n)\rightarrow N_M(f) + N_m(f) + N_S(f).
    \end{equation}
    Suppose we were to show that $N_M(f_n)\rightarrow N_M(f)$. Then, by symmetry it easily follows that $N_m(f_n)\rightarrow N_m(f)$. Moreover, the above implies that $N_S(f_n)\rightarrow N_S(f)$ also. Thus it suffices to show that $N_M(f_n)\rightarrow N_M(f)$. To do so, noting Lemma \ref{lemma:mfld_to_R}, without loss of generality we now assume that $f$ has a single isolated critical point, $p$. We shall begin by considering the case that $p$ has non-zero homological index. Now, let $\epsilon$ be as in the proof of Theorem \ref{thm:homological_index}, so that eventually, $B:=B_{4\epsilon}(p)$ contains a single critical point of $f_n$, which we denote as $p_n$, and there are no critical points of $f_n$ outside $B$. We must show that $p$ is a local maxima if and only if $p_n$ is a local maxima for all but finitely many $n$.
    
    \underline{Step 1: if $p$ is a local maxima then, eventually, so is $p_n$.}  Suppose $p$ is a local maxima in $B$, but $p_n$ isn't a local maxima for infinitely many $n$. Without loss of generality, assume $p_n$ is not a local maxima for all $n$. For any $\delta >0$, we can apply Theorem \ref{thm:homological_index} to the manifold with boundary $B_{\delta}(p)$, to see that eventually $p_n\in B_{\delta}(p)$. It therefore follows that $p_n\rightarrow p$. By the uniform convergence $||f_n-f||_\infty\rightarrow 0$, it thus follows that $f_n(p_n)\rightarrow f(p)$.
    
    For each $n$, let $p_n^*:=\arg\max_{s\in B}f_n(s)$. Note that it is easily seen that $f_n(p_n^*)\rightarrow f(p)$. Now, assume, for contradiction, that $p_n^*\in \partial B$ for infinitely many $n$. Without loss of generality, assume that this holds for all $n$. By compactness, $\{p_n^*\}$ has a convergent subsequence which, for ease, we also denote as $\{p_n^*\}$, with limit $p^*$. Noting $p$ is a local maxima on $B$, choose $\delta>0$ to satisfy; 
    \begin{equation}\nonumber
        \delta < \frac{1}{2}\bigg(f(p) - \sup_{s\in \partial B} f(s)\bigg).
    \end{equation}
    Then, if we take $n$ large enough that $||f-f_n||_\infty<\delta$, we see that:
    \begin{equation}\nonumber
        f_n(p_n^*)\leq \sup_{s \in \partial B} f_n(s) \leq \sup_{s \in \partial B} f(s) + \delta < f(p) - \delta.
    \end{equation}
    This contradicts the fact that $f_n(p_n^*)\rightarrow f(p)$. Thus, it cannot be the case that $p_n^*\in \partial B$ infinitely often. It follows that $p_n^*\in\text{Int}(B)$ eventually and therefore $p_n^*$ must eventually be a local maxima of $f_n$. This means that $B$ contains two critical points of $f_n$, namely $p_n$ and $p_n^*$, infinitely often, contradicting the fact that $B$ eventually contains only a single critical point of $f_n$ by construction. Step 1 now follows. 

    \underline{Step 2: if $p_n$ is eventually a local maxima then so is $p$.} Suppose $p_n$ is eventually a local maxima of $f_n$ but $p$ is not a local maxima of $f$. Assume, without loss of generality, $p_n$ is a local maxima for all $n$. Note that, by assumption $p$ has non-zero homological index and thus cannot be a point of undulation. Suppose instead that $p$ is a local minima. By Step $1$, we have that if $p$ is a local minima then $p_n$ would also have to eventually be minima (this is seen by noting that local maxima of $-f$ and $-f_n$ are local minima of $f$ and $f_n$, and vice versa). This is a contradiction, as $p_n$ are eventually maxima, we know that $p$ cannot be a local minima. Thus, $p$ must be either a saddle point or local maxima.

   We now show that if $p_n$ is a sequence of local maxima, then $p$ cannot be a saddle point of $f$. It may seem intuitive that sequence of isolated local maxima cannot tend to a saddle point. However, care must be taken here. As Example \ref{example:isolated_crit} shows, without the condition that $p$ is an isolated critical point, counterexamples may easily be constructed. To prove the result, let $c=f(p)$ and $\epsilon > 0$ be as in the proof of Theorem \ref{thm:homological_index}, and suppose there exists $\delta \in (0,\epsilon)$ such that $f^{-1}(c)$ and $\partial B_\delta(p)$ intersect transversally (i.e. Condition \eqref{eq:condition} holds).

    Now, assume $n$ is large enough that $p_n\in B_\delta(p)$ is the only critical point of $f_n$ in $B_\delta(p)$. As $p$ is a saddle point, we can find $p'\in B_\delta(p)$ such that $f(p')>c$. Choose positive $\kappa < (f(p')-f(p))/2$. By uniform convergence, we have that $f_n(p_n)\rightarrow f(p)$. Thus, we can assume $n$ is large enough that $|f-f_n|<\kappa$ and $|f_n(p_n)-f(p)|<\kappa$. Therefore:
    \begin{equation}\nonumber
        f_n(p')\geq f(p')-\kappa > f(p)+\kappa \geq f_n(p_n).
    \end{equation}
    However, $p_n$ is a local maxima of $f_n$. Therefore, we can apply the mountain pass theorem for convex subsets, Theorem \ref{thm:mpt}, to see that there now exists a point $p_n' \in B_\delta(p)$ which is either a critical point or a point at which $f_n^{-1}(c)$ intersects $\partial B_\delta(p)$ tangentially. As $p_n$ is the only critical point of $f_n$ in $B_\delta(p)$ by construction, $f_n^{-1}(c)$ intersects $\partial B_\delta(p)$ tangentially at $p_n'$. 
    
    As $\partial B_\delta(p)$ is compact the sequence $\{p_n'\}$ must have a convergent subsequence tending to limit $p^*$. However, $c=f_n(p_n')\rightarrow f(p^*)$ and $0=\nabla f_n(p_n')\cdot (p_n'-p)\rightarrow \nabla f(p^*)\cdot (p^*-p)$. Thus, $f^{-1}(c)$ intersects $\partial B_\delta(p)$ tangentially at $p^*$. This is a contradiction as we assumed that $\delta>0$ was such that $f^{-1}(c)$ and $\partial B_\delta(p)$ intersect transversally. Thus, when such a $\delta >0$ exists, step 2 follows.

    Finally, suppose that for all $\delta>0$, $f^{-1}(c)$ and $\partial B_\delta(p)$ do not intersect transversally. If this is the case, then by Lemma \ref{lem:transversal}, we can construct transversal adjustments of $f$ and $f_n$, denoted $\tilde{f}$ and $\tilde{f}_n$ respectively, and choose $\delta>0$, such that $\tilde{f}^{-1}(c)$ and $\partial B_\delta(p)$ intersect transversally. Repeating the above argument for $\tilde{f}$ and $\tilde{f}_n$, we see that the claim of step 2 holds for $\tilde{f}$ and $\tilde{f}_n$. Noting that the critical points of $f$ and $\tilde{f}$ are identical, and that, for $n$ large enough, the critical points of $f_n$ and $\tilde{f}_n$ are identical, we see that step 2 now follows for $f$ and $f_n$.

    \underline{Step 3: Local maxima cannot converge to a point of undulation.} We now have that, under the assumption that $p$ has non-zero homological index, $p$ is a local maxima of $f$, if and only if, for $n$ large enough, $p_n$ is also a local maxima of $f_n$. We now must consider the case that the homological index of $p$ is zero, i.e. $p$ is a point of undulation. In this case, we must show that $f_n$ eventually possesses no local maxima.
    
    By Assumption \ref{assumption:crit}, we can define $B$ to be a neighbourhood of $p$ such that for $n$ large enough, $B$ contains at most one critical point of $f_n$. Using the arguments of Theorem \ref{thm:homological_index}, we may also assume $B^c$ contains no critical points of $f_n$. By choosing a subsequence of $\{f_n\}$ if needed, we now assume, without loss of generality, that $B$ contains exactly one critical point for all $n$, denoted $p_n$. We must show that it cannot be the case that $p_n$ is a local maxima infinitely often. Suppose for contradiction that $p_n$ is a local maxima infinitely often. Then $\text{Ind}^H_{\circ}(\nabla f)=0$ but, for infinitely many $n$, $\text{Ind}^H_{\circ}(\nabla f_n)\neq 0$. However, we can employ the same arguments used in the proof of Theorem \ref{thm:homological_index} (that is, we can construct a $C^1$ function which equals $f$ on a neighbourhood of $\partial B$ and $f_n$ in $B_{2\epsilon}(p)$ and then apply Poincar\'{e}-Hopf theorem) to obtain $\text{Ind}^H_{\circ}(\nabla f)=\text{Ind}^H_{\circ}(\nabla f_n)$ for sufficiently large $n$ (c.f. Equation \eqref{eq:hom_index_equality}). This is a contradiction. Thus, $B$ cannot contain a local maxima of $f_n$ infinitely often. It follows that for $n$ large enough, $f_n$ possesses no local maxima.
    
    Combining the above, we see that if $f$ has a single isolated critical point, then that critical point is a local maxima of $f$, if and only if, $f_n$ eventually has exactly one local maxima. Noting Lemma \ref{lemma:mfld_to_R}, we obtain $N_M(f_n)\rightarrow N_M(f)$, as desired. The final statement in the theorem then follows directly from Theorem \ref{thm:homological_index}.
    \end{proof}

\subsection{Hessian Convergence and Morse Theory}\label{sec:morse}

We now turn attention to the setting where $f_n\xrightarrow{C^2}f$ and consider what Morse theory can say about the relationship between the critical points of $f_n$ and $f$.
In this section, we make heavy reference to the proof of the Morse lemma given by \cite{ioffe1997}. We utilize this specific proof over the more standardly cited approaches, such as those of \cite{milnor1963morse} and \cite{nirenberg1974topics}, predominantly for two reasons. First, the standard proofs of the Morse lemma typically involve an application of the inverse and implicit function theorems, respectively, which both implicitly require $C^3$ differentiability or above. Secondly, unlike many of these proofs, the version of the Morse lemma given by \cite{ioffe1997} can be adapted to give explicit bounds on the size of the ``Morse neighborhood'' over which the polynomial representation holds (see Corollary \ref{corr:morse_constants} in Appendix \ref{app:morse}). Conventional approaches only guarantee that such a neighborhood exists, but do not quantify its size. Such quantification is crucial to our purposes as we need to describe the resemblance between the Morse neighborhoods of $f_n$ and $f$.

To begin, we show that, when $n$ is large enough, for every critical point of $f_n$, there exists at least one corresponding critical point of $f$. This statement can be shown using the methods of the previous section, and is formalized below.

\begin{mdframed}
    \begin{lemma}\label{lemma:seq}
        Suppose that $S$ is a compact $C^2$ manifold with boundary. Let $f_n:S\rightarrow \mathbb{R}$ be $C^1$, $f:S\rightarrow \mathbb{R}$ be $C^2$ and $p$ be a Morse point of $f$. If $f_n\xrightarrow{C^1}f$  and Assumption \ref{assump:no_bdry_crits} holds, then there exists a convergent sequence $p_n\rightarrow p$ such that each $p_n$ is a critical point of $f_n$, for all but finitely many $n$. If each $f_n$ is $C^2$ and $f_n\xrightarrow{C^2}f$, then for sufficiently large $n$, $\{p_n\}$ are Morse points with $\text{Ind}^M(\nabla f_n, p_n)= \text{Ind}^M(\nabla f, p)$. 
    \end{lemma}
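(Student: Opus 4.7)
The plan is to construct $p_n$ via a topological degree argument, then transfer Morse-index information through eigenvalue perturbation. Working in a chart around $p$, which lies in $\text{Int}(S)$ by Assumption \ref{assump:no_bdry_crits}, we may treat $S$ locally as a compact subset of $\mathbb{R}^D$. Since $p$ is a Morse point, $H_f(p)$ is non-singular, so by the inverse function theorem applied to the $C^1$ map $\nabla f$ there exists $\epsilon_0 > 0$ with $B_{\epsilon_0}(p) \subset \text{Int}(S)$ on which $\nabla f$ is a diffeomorphism onto its image; in particular $p$ is the unique critical point of $f$ in $B_{\epsilon_0}(p)$. For $\epsilon \in (0,\epsilon_0]$, set $\delta_\epsilon := \inf_{s \in \partial B_\epsilon(p)} |\nabla f(s)|$, which is strictly positive by compactness. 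By $C^1$ convergence, $\|\nabla f_n - \nabla f\|_\infty < \delta_\epsilon / 2$ for all sufficiently large $n$, so the linear homotopy $h_t := (1-t)\nabla f + t\nabla f_n$ is non-vanishing on $\partial B_\epsilon(p)$ for every $t \in [0,1]$.

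Homotopy invariance of the topological degree then gives $\deg(\nabla f_n / |\nabla f_n|, \partial B_\epsilon(p)) = \deg(\nabla f / |\nabla f|, \partial B_\epsilon(p)) = \text{Ind}^H(\nabla f, p)$, which is nonzero because $p$ is a Morse point (by the computation recorded at the end of Section \ref{sec:morse_notation}). If $\nabla f_n$ had no zero in $B_\epsilon(p)$, then $\nabla f_n / |\nabla f_n|$ would extend continuously to the whole closed ball, forcing the degree to vanish, a contradiction. So for each $\epsilon \in (0,\epsilon_0]$ and every sufficiently large $n$, there exists a critical point of $f_n$ inside $B_\epsilon(p)$. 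Taking a sequence $\epsilon_k \downarrow 0$ and diagonalizing yields a sequence of critical points $p_n$ of $f_n$ with $p_n \to p$.

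For the Morse-index claim, assume additionally $f_n \xrightarrow{C^2} f$ and fix such a sequence $p_n \to p$. Decomposing $H_{f_n}(p_n) - H_f(p) = \bigl(H_{f_n}(p_n) - H_f(p_n)\bigr) + \bigl(H_f(p_n) - H_f(p)\bigr)$, the first summand tends to zero by uniform convergence of Hessians and the second by continuity of $H_f$ (since $f \in C^2$), giving $H_{f_n}(p_n) \to H_f(p)$. Because $H_f(p)$ is non-singular its eigenvalues are bounded away from zero, and eigenvalues of symmetric matrices depend continuously on their entries (e.g.\ by the Courant--Fischer min-max characterization), so for all sufficiently large $n$ each eigenvalue of $H_{f_n}(p_n)$ is close to a corresponding eigenvalue of $H_f(p)$. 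All eigenvalues of $H_{f_n}(p_n)$ are therefore nonzero with matching signs, which makes $p_n$ a Morse point with $\text{Ind}^M(\nabla f_n, p_n) = \text{Ind}^M(\nabla f, p)$.

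The main obstacle is setting up the degree argument so that the homotopy $h_t$ remains non-vanishing on the whole of $\partial B_\epsilon(p)$ uniformly in $t \in [0,1]$; this requires the simultaneous use of the lower bound $\delta_\epsilon$ on $|\nabla f|$ and the $C^1$ convergence $\|\nabla f_n - \nabla f\|_\infty \to 0$, combined with the nonvanishing of $\text{Ind}^H(\nabla f, p)$ at a Morse point. None of these steps is conceptually deep, and the second half of the lemma reduces to a standard eigenvalue-continuity argument, so no further machinery should be needed.
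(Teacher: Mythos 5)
Your proof is correct, but the existence half takes a more elementary route than the paper's. The paper obtains the critical point of $f_n$ near $p$ by recycling the machinery of Theorem \ref{thm:homological_index}: after reducing to a ball via Lemma \ref{lemma:mfld_to_R}, it glues $f_n$ to $f$ with a partition of unity, matches boundary indices through a collar, and invokes the generalized Poincar\'{e}-Hopf theorem (Theorem \ref{thm:poincarehopf}) to conclude $0 \neq \text{Ind}^H(\nabla f, p) = \text{Ind}^H_\circ(\nabla f_n)$ on a small ball, which forces a zero of $\nabla f_n$ there. You instead isolate $p$ directly via the inverse function theorem (using non-singularity of $H_f(p)$), and run the classical Brouwer-degree argument: the linear homotopy $(1-t)\nabla f + t\nabla f_n$ is non-vanishing on $\partial B_\epsilon(p)$ once $\|\nabla f_n - \nabla f\|_\infty < \delta_\epsilon/2$, so homotopy invariance transfers the nonzero degree $(-1)^j$ (the computation at the end of Section \ref{sec:morse_notation}) to $\nabla f_n$, and a non-vanishing field on the closed ball would have boundary degree zero. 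This avoids the collar/boundary-index apparatus and the generalized Poincar\'{e}-Hopf theorem entirely, at the cost of redoing the gradient lower-bound estimate that the paper gets by citation; both arguments ultimately rest on the same fact that a Morse point has nonzero homological index. The diagonalization over $\epsilon_k \downarrow 0$ and the second half of your argument (uniform Hessian convergence plus continuity of eigenvalues of symmetric matrices, giving eventually matching signs) coincide with the paper's proof.
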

    \vspace{0.2cm}
\end{mdframed}
\begin{proof}
    Noting Lemma \ref{lemma:mfld_to_R} assume $S$ is a compact subset of $\mathbb{R}^D$ containing a single critical point of $f$, $p\in \text{Int}(S)$. As $f$ is Morse, $p$ cannot be an undulation point and thus has non-zero homological index. Repeating the argument in the proof of Theorem  \ref{thm:homological_index}, we have that for any sufficiently small $\eta >0$, if $n$ is large enough then:
    \begin{equation}\nonumber
        0 \neq \text{Ind}^H(\nabla f, p) = \text{Ind}_\circ^H(\nabla f_n),
    \end{equation}
    where the interior homological index is defined on $B_\eta(p)$. Thus, for $n$ large enough, the interior homological index of $f_n$ on $B_\eta(p)$ is non-zero. Consequently, $f_n$ has a critical point $p_n$ in $B_\eta(p)$. As this holds for arbitrarily small $\eta>0$, we can now construct a sequence of points $p_n\rightarrow p$ such that $\nabla f_n(p_n)=0$ for all but finitely many $n$, as desired. \\
    \\
    To prove the second part of the statement, let each $f_n$ be $C^2$, with $\lambda_j(x)$ and $\lambda_{j,n}(x)$ denoting the $j^{th}$ eigenvalues\nomenclature{$\lambda_j,\lambda_{j,n}$}{Eigenvalues of $H_f$ and $H_{f_n}$, viewed as functions of space} of $H_f(x)$ and $H_{f_n}(x)$, respectively. By uniform convergence of the Hessians, we have that $|\lambda_{j,n}-\lambda_j|\rightarrow 0$ uniformly and thus $\lambda_{j,n}(p_n)\rightarrow \lambda_j(p)$. Therefore, for $n$ large enough $\text{sgn}(\lambda_{j}(p))=\text{sgn}(\lambda_{j,n}(p_n))$ for all $j$. The result now follows.
\end{proof}

It is worth noting that, even if we assume that $f_n$ is Morse and it's Hessian converges to that of $f$ pointwise, we still do not have convergence of the $N_\lambda(f_n)$ to $N_\lambda(f)$ for $\lambda\in\{1,...,D\}$. This is illustrated by the following example.

\begin{examplebox}
\begin{example}
    Suppose $f_n:S\rightarrow \mathbb{R}$ and $f:S\rightarrow \mathbb{R}$ are Morse, and $f_n\xrightarrow{C^1}f$ and $H_{f_n}(x) \rightarrow H_f(x)$ for all $x$, pointwise. Then, for $\lambda\in\{0,...,D\}$, it is still not necessarily true that $N^M_\lambda(f_n)\rightarrow N^M_\lambda(f)$.\\
    
{\centering
    \includegraphics[width=\textwidth]{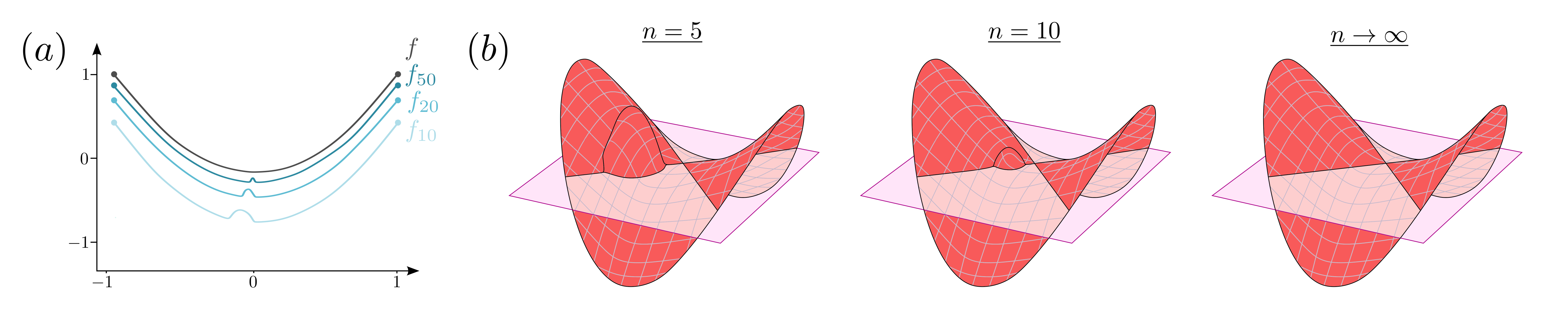}
    \captionof{figure}{Sequences of one-dimensional (a) and two-dimensional (b) functions $f_n\xrightarrow{C^1}f$ whose Hessians converge pointwise, but $N^M_\lambda(f_n)\not\rightarrow N^M_\lambda(f)$ as each $f_n$ has possesses a local maxima, but $f$ does not.}
    \label{fig:saddlewithbump}}
\raggedright
\vspace{0.2cm}
    The functions $f_n,f:\mathbb{R}\rightarrow \mathbb{R}$ in Panel $(a)$ of Fig \ref{fig:saddlewithbump} are given by:
    \begin{equation}\nonumber
        f_n(x)=x^2+b(nx+1)/\sqrt{n}-\frac{5}{n}\quad \text{ and }\quad f(x) = x^2,
    \end{equation}
    while, in a slight abuse of notation, the higher dimensional analogues $f_n,f:\mathbb{R}^2\rightarrow \mathbb{R}$ shown in Panel $(b)$ are given by: 
    \begin{equation}\nonumber
        f_n(x,y)=x^2-y^2+20b(nx+1,ny+1)/n^2\quad \text{ and }\quad f(x,y) = x^2 - y^2,
    \end{equation}
    where the smooth bump function $b:\mathbb{R}^m\rightarrow \mathbb{R}$ is defined as in Example \ref{example:singlemax}.
    
\end{example}
\vspace{0.2cm}
\end{examplebox}
The examples given in Fig. \ref{fig:saddlewithbump} demonstrate that the combination of $C^1$ convergence of $f_n$, Morseness of $f_n$ and $f$, and pointwise convergence of the Hessians is not sufficient to guarantee that the number of critical points converge. Thus, the stronger condition of $f_n\xrightarrow{C^2} f$ is indeed required. Before we turn our attention to Morse theory, it is worth building some intuition on how $C^2$ convergence may rule out situations such as those of Fig. \ref{fig:saddlewithbump} (a) and (b). 

In the one-dimensional case, it is not to difficult to show that $f_n\xrightarrow{C^2}f$, alongside Morseness of $f$, rules out situations such as (a). A sketch proof would be as follows; we know the local minima of $f_n$ in $(a)$ have positive second derivative and the local maxima have negative second derivative. As $n\rightarrow \infty$ these maxima and minima move closer together, and the uniform convergence of the Hessian forces the limiting critical point to thus have second derivative zero. This means that the minima of $f$ is degenerate, which cannot happen by the Morseness of $f$.

However, the same strategy cannot be applied to higher dimensional examples, such as $(b)$. To see why, first note that in $(b)$ the local maxima lies on the line $y=-x$, and moves increasingly closer to the saddle at the origin as $n\rightarrow \infty$. Applying the same argument as above, we see that along $y=-x$, $f$ must have a zero-valued directional second derivative. However, this is not a contradiction for higher dimensional critical points, as the saddle $f(x,y)=x^2-y^2$ is constant along the diagonal. Thus, to prove the general, multidimensional case, we require the machinery of Morse theory.

Specifically, we shall use the homotopic proof of the Morse lemma provided by \cite{ioffe1997}, a version of which is included in Appendix \ref{app:morse} for reference. As we draw heavily from this proof, it is worth briefly highlighting the key concepts underlying it. Informally, the Morse lemma states that for a Morse function $f:\mathbb{R}^D\rightarrow \mathbb{R}$ with a single isolated critical point at the origin, there exists a change of coordinates $\Gamma:\mathbb{R}^D\rightarrow \mathbb{R}^D$ such that locally $f(\Gamma(x))=x'Hx$\nomenclature{$\Gamma, \Gamma_n$}{Homeomorphisms guaranteed by the Morse lemma, locally mapping $f$ and $f_n$ to quadratic functions}. In other words, critical points appear approximately `polynomial' under a suitable choice of coordinate basis.

To prove that such a change of coordinates exist, \cite{ioffe1997} define $\phi(x):=f(x)-x'Hx$ and consider the homotopy $S_t:(f-\phi)\simeq f$ defined by $S_t(x)=x'Hx+t\phi(x)$. In words, $S_t$ may be thought of as a continuous deformation which morphs the surface $S_0(x)=x'Hx$ into $S_1(x)=f(x)$ (Fig. \ref{fig:morse_homotopy})\nomenclature{$S_t$}{Homotopy from $x'Hx$ to $f(x)$}. To find a diffeomorphism $\Gamma$ such that $S_0(x)=S_1(\Gamma(x))$, the key insight is to consider a second homotopy $\Gamma_t:\text{Id}\simeq \Gamma$. Suppose $\Gamma_t$ were constructed to ensure that $S_t \circ \Gamma_t$ were constant over all $t\in [0,1]$. Then, it would trivially follow that $x'Hx=(S_0 \circ \Gamma_0)(x)=(S_1 \circ \Gamma_1)(x)=f(\Gamma(x))$ as desired.\nomenclature{$\Gamma_t$}{Homotopy defined such that $S_t \circ \Gamma_t$ is constant for all $t$, not to be confused with $\Gamma_n$. $\Gamma_{t,n}$ is defined analogously}

{\centering
    \includegraphics[width=\textwidth]{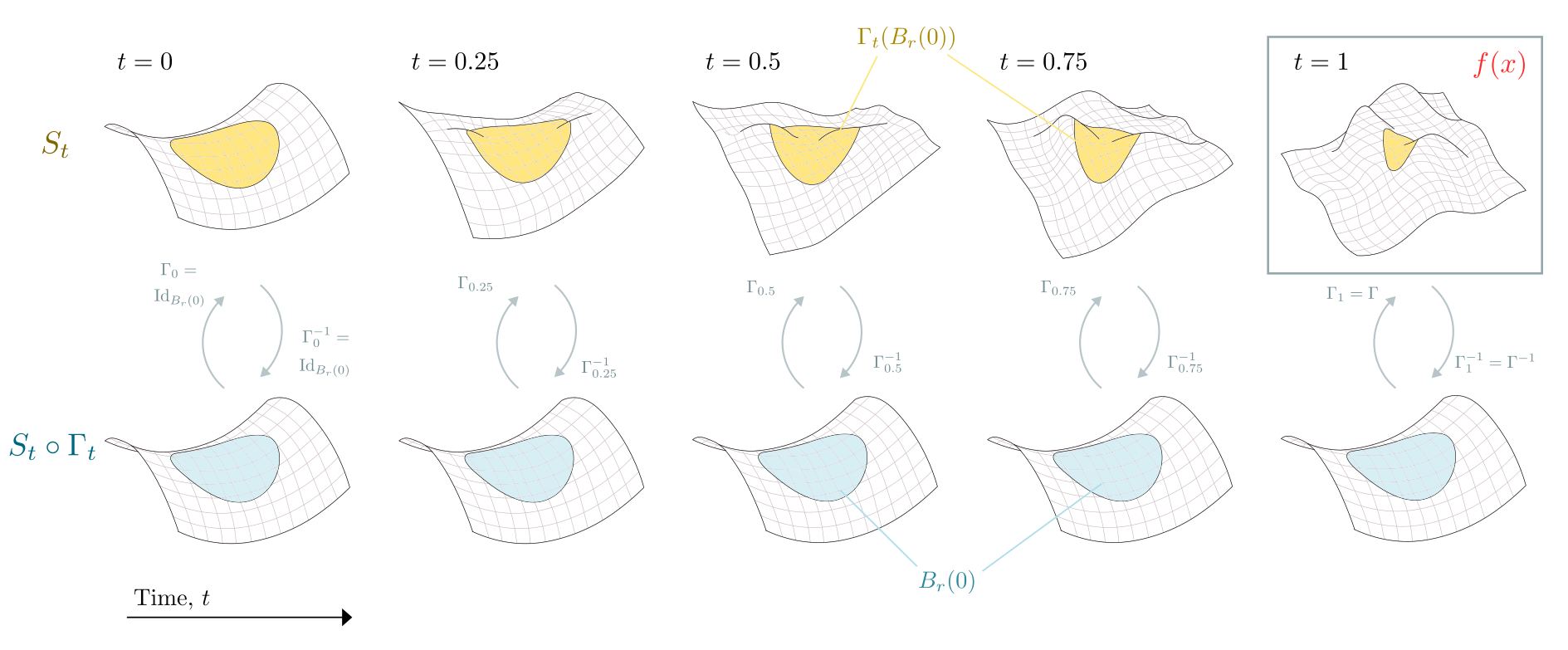}
    \captionof{figure}{Illustration of homotopies $S_t:(f-\phi)\simeq f$ and $\Gamma_t: \text{Id}\simeq \Gamma$ used by \cite{ioffe1997} to prove the Morse lemma. Here, $S_t$ is a continuous deformation from a `standard' saddle, $x^2-y^2$, into a saddle of $f$ (top row). The homotopy $\Gamma_t$ is constructed to ensure that $S_t \circ \Gamma_t$ is constant over time (bottom row). As $\Gamma_0$ is the identity map, it follows that $S_0 \circ \Gamma_0$ is the standard saddle (far left), and thus, as $\Gamma_t$ is constant over time, the map $\Gamma_1$ is a change of coordinates which transforms the saddle of $f$ to the standard saddle (far right). The region over which $\Gamma_t$ restricts to a diffeomorphism is highlighted in yellow for $S_t$ (top) and blue for $S_t\circ \Gamma_t$ (bottom).}
    \label{fig:morse_homotopy}}
    
    \vspace{0.5cm}
To construct $\Gamma_t$ in a way that guarantees that $S_t\circ \Gamma_t$ is constant, \cite{ioffe1997} consider the differential equation $\frac{\partial}{\partial t}(S_t \circ \Gamma_t)(x)=0$. By rearranging, they obtain $\Gamma_t$ as a non-autonomous flow of the form $\frac{\partial}{\partial t} \Gamma_t(x)=v_t(x)$ for some bi-Lipschitz function $v_t$\nomenclature{$v_t$}{Nonautonomous flow used to define $\Gamma_t$}. Along with the initial condition that $\Gamma_0(x)=\text{Id}(x)=x$, this flow expression can be used to implicitly define $\Gamma_t$, thus showing that such a homotopy does indeed exist. In the following proof, the given definitions of $\Gamma_t$ and $\Gamma_{t,n}$, which may appear to arise without clear motivation, were obtained using exactly this process.

\begin{mdframed}
    \begin{thm}
        \label{thm:constants_lemma}
         Suppose that $S$ is a compact $C^2$ manifold with boundary. Let $f_n:S\rightarrow \mathbb{R}$ be $C^2$ and $f:S \rightarrow \mathbb{R}$ be Morse. Assume that $\{p_n\}$ is a sequence of Morse points of $f_n$ tending to a Morse point of $f$, $p$. If $f_n \xrightarrow{C^2}f$ then there exists a convergent sequence of real positive values $r_n$ with limit $r>0$, a constant $\tilde{r}>0$, and bi-Lipschitz diffeomorphisms $\Gamma_n:B_{r_n}(p_n)\rightarrow \mathbb{R}^D$ and $\Gamma:B_{r}(p)\rightarrow \mathbb{R}^D$ satisfying:
        \begin{equation}\nonumber
            f_n(\Gamma_n(x)) = \frac{1}{2}(x-p_n)'H_{f_n}(p_n)(x-p_n), \quad f(\Gamma(x)) = \frac{1}{2}(x-p)'H_{f}(p)(x-p), \quad 
        \end{equation}
        and $|\Gamma_n-\Gamma|$ is well-defined on $B_{\tilde{r}}(p)$ for all but finitely many $n$, with $|\Gamma_n-\Gamma|\rightarrow 0$ uniformly on $B_{\tilde{r}}(p)$.
    \end{thm}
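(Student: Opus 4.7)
The plan is to apply the quantitative Morse lemma (Corollary~\ref{corr:morse_constants}) separately to $f$ and each $f_n$, and then to show that Ioffe's homotopy construction is stable under $C^2$ perturbation. First I would translate coordinates to place $p$ at the origin and, for each $n$, consider the shift $\tilde{f}_n(x) := f_n(x + p_n)$, so that the shifted critical points all lie at $0$; the hypothesis $f_n \xrightarrow{C^2} f$ together with $p_n \to p$ and uniform continuity of $H_f$ on a compact neighborhood of $p$ yields $\tilde{f}_n \xrightarrow{C^2} f$ near the origin, and in particular $H_{f_n}(p_n) \to H_f(p)$.

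Next I would invoke Corollary~\ref{corr:morse_constants} to produce $r > 0$ and $\Gamma$ for $f$, and $r_n > 0$, $\Gamma_n$ for each $\tilde{f}_n$. The explicit radii provided by that corollary depend only on $\|H_f(p)^{-1}\|$, on a modulus of continuity for the Hessian, and on a pointwise bound for $\|H_f\|$, all of which are continuous functionals in the $C^2$ topology. Since $H_{f_n}(p_n)$ is invertible by the Morse hypothesis and $H_{f_n}(p_n) \to H_f(p)$, the same corollary applied to $\tilde{f}_n$ yields a sequence $r_n \to r > 0$ together with the required bi-Lipschitz diffeomorphisms $\Gamma_n : B_{r_n}(p_n) \to \mathbb{R}^D$ satisfying the quadratic normal form.

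The uniform convergence $|\Gamma_n - \Gamma| \to 0$ will come from the flow representation of $\Gamma$ and $\Gamma_n$ described above the theorem. In Ioffe's construction, $\Gamma = \Gamma_1$ is the time-one evaluation of a non-autonomous flow generated by a vector field $v_t$ assembled from $H := H_f(p)$ and $\phi(x) := f(x) - \tfrac{1}{2} x' H x$ through an integral (Hadamard-type) representation $\phi(x) = \langle A(x) x, x \rangle$; analogously $\Gamma_n = \Gamma_{1,n}$ is the time-one map of the flow built from $H_n := H_{f_n}(p_n)$ and $\phi_n$. Choose $\tilde{r} \in (0, r)$ small enough that, for all large $n$, $B_{\tilde{r}}(p) \subset B_{r_n}(p_n) \cap B_r(p)$. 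The $C^2$ convergence $f_n \to f$ transfers through the Hadamard integral to uniform convergence $A_n \to A$, hence to $v_{t,n} \to v_t$ uniformly on $[0,1] \times B_{\tilde{r}}(p)$, while the spatial Lipschitz constants of $v_{t,n}$ admit a uniform-in-$n$ bound. A standard Gronwall argument for ODEs with parameter then delivers $\sup_{t \in [0,1],\, x \in B_{\tilde{r}}(p)} |\Gamma_{t,n}(x) - \Gamma_t(x)| \to 0$, and evaluating at $t = 1$ gives the claim.

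The principal technical obstacle is the non-escape condition required for Gronwall's inequality: each flow $\Gamma_{t,n}$ issued from $B_{\tilde{r}}(p)$ must remain inside the region on which $v_{t,n}$ is defined and bi-Lipschitz for every $t \in [0,1]$ and every large $n$, even though this region itself depends on $n$. I would handle this by shrinking $\tilde{r}$ relative to the quantitative Morse radii of the second paragraph and bootstrapping: the uniform spatial Lipschitz bound together with the uniform-in-$n$ convergence of the vector fields means orbits of $\Gamma_{t,n}$ cannot stray much further from $p$ than those of $\Gamma_t$, so a single choice of $\tilde{r}$, independent of $n$, confines all orbits to a fixed compact subset of $B_r(p) \cap B_{r_n}(p_n)$ over the whole time interval.
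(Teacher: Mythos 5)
Your overall strategy coincides with the paper's: shift $p_n$ and $p$ to the origin, use the explicit constants of Corollary~\ref{corr:morse_constants} to produce radii $r_n\to r>0$ (the paper does this by taking the largest admissible radius $\tilde r_n$, proving $\liminf_n \tilde r_n>0$ by contradiction using uniform convergence of the Hessians, and then setting $r_n=\min(r,\tilde r_n)$ --- a step you need in some form because the corollary admits a whole range of radii rather than a canonical one), and then compare $\Gamma_n=\Gamma_{1,n}$ with $\Gamma=\Gamma_1$ through their generating fields and a Gr\"{o}nwall estimate (Lemma~\ref{lem:flow_conv}); the non-escape issue you single out is handled there exactly as you suggest, by shrinking the radius so orbits stay in the region where the fields are defined and uniformly Lipschitz.

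The genuine gap is the inference ``uniform convergence $A_n\to A$, hence $v_{t,n}\to v_t$ uniformly.'' In Ioffe's construction, and in the paper, the field is $v_t(x)=-\phi(x)\,y_t(x)/|y_t(x)|^2$ with $y_t(x)=Hx+t\nabla\phi(x)$ (no Hadamard matrix $A(x)$ appears in the construction), and the denominator $|y_{t,n}(x)|^2$ vanishes at the shifted critical point. Uniform convergence of the ingredients therefore does not simply transfer to the quotient: near $x=0$ one must combine the quadratic vanishing $|\phi_n(x)|\le L|x|^2$ (your Hadamard representation would also deliver this bound) with a lower bound $|y_{t,n}(x)|\ge c_n|x|$ that is uniform in $n$, i.e.\ $c_n=\|H_n^{-1}\|^{-1}-L\to c>0$, and then carry out an explicit estimate --- the paper bounds the three terms of $\phi\,|y_{t,n}|^2 y_t-\phi_n|y_t|^2 y_{t,n}$ and divides by $|y_{t,n}|^2|y_t|^2$; alternatively an $\epsilon/3$ argument splitting $B_{\tilde r}$ into $B_\rho$ and its complement works --- to conclude $\sup_{x\in B_{\tilde r}}|v_{t,n}(x)-v_t(x)|\to 0$. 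This estimate is the bulk of the paper's proof and is exactly what your ``hence'' elides; the uniform-in-$n$ Lipschitz constant you invoke is used in your write-up only for the Gr\"{o}nwall step, not to justify the field convergence (though equi-Lipschitzness plus pointwise convergence could be made into an alternative justification if you argued it explicitly). Once this piece is supplied, the remainder of your plan goes through as in the paper.
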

    \vspace*{0.2cm}
\end{mdframed}

\begin{proof}
    For ease, identify $p$ with the origin and, noting Lemma \ref{lemma:mfld_to_R}, assume $S=B_\eta(p)$. By translating each $f_n$ so that $f_n(x)\mapsto f_n(x-p_n)$, we can assume that each $p_n$ is also identified with $0$. It is easily verified that the resultant $f_n$ still satisfy $f_n\xrightarrow{C^2}f$.\\
    \\
    We now define $\Gamma_n$ and $\Gamma$ as in the proof of the Morse lemma employed by \cite{ioffe1997}. Specifically, we proceed by first defining:
    \begin{equation}\nonumber
        \phi(x)=f(x)-\frac{1}{2}x'Hx \quad \text{ and } \quad \phi_n(x)=f_n(x)-\frac{1}{2}x'H_nx.
    \end{equation}
    Next, we define $y_t$ and $y_{t,n}$\nomenclature{$y_t,y_{t,n}$}{Shorthand functions used in defining $v_t$ and $v_{t,n}$} as follows:
    \begin{equation}\nonumber
        y_t(x) = Hx + t\nabla \phi(x) \quad \text{ and } \quad y_{t,n}(x) = H_nx + t\nabla \phi_n(x),
    \end{equation}
    and $v_t$ and $v_{t,n}$ as follows:
\begin{equation}\nonumber
    v_t(x) = \begin{cases}
        -\phi(x) \frac{y_t(x)}{|y_t(x)|^2}, & x \neq 0, \\
        0, & x =0,\\
    \end{cases}\quad \text{ and } \quad 
    v_{t,n}(x) = \begin{cases}
        -\phi_n(x) \frac{y_{t,n}(x)}{|y_{t,n}(x)|^2}, & x \neq 0, \\
        0, & x =0.\\
    \end{cases}
\end{equation}
We then define $\Gamma_{t,n}$ and $\Gamma_t$ as the flows generated by the non-autonomous systems:
\begin{equation}\nonumber
    \frac{\partial }{\partial t}\Gamma_t(x) = v_t(\Gamma_t(x)), \quad \Gamma_0(x)=x, \quad \text{ and } \quad \frac{\partial }{\partial t}\Gamma_{t,n}(x) = v_{t,n}(\Gamma_{t,n}(x)), \quad \Gamma_{0,n}(x)=x,
\end{equation}
and denote $\Gamma:=\Gamma_1$ and, in a slight abuse of notation, $\Gamma_{n}:=\Gamma_{1,n}$.\\
\\
\noindent
Now that we have defined $\Gamma$ and $\Gamma_n$, we shall define the constants $r$ and $r_n$\nomenclature{$r,r_n$}{Radius of Morse neighborhoods for $\Gamma$ and $\Gamma_n$, respectively}. To do so, we let $K^1_n$ and $K^2_n$ be defined by:
\begin{equation}\nonumber
    K^1_n = \min\bigg(\frac{1}{2||H_n^{-1}||},  \frac{\ln(2)}{24||H_n||||H_n^{-1}||^2}\bigg) \quad \text{ and }\quad K^2_n = \frac{1}{8||H_n||||H_n^{-1}||}.
\end{equation}
Note that as $H_n$ is the Hessian of a Morse point, the above quantities are well-defined and positive for all $n$. Defining $K^1$ and $K^2$ analogously to the above, with all subscript $n$'s removed, we see that $K^1$ and $K^2$ are also well-defined and positive, with $K^1_n\rightarrow K^1$ and $K^2_n\rightarrow K^2$ as $n\rightarrow \infty$. For each $n$ let $\tilde{r}_n \in (0,\eta/2]$ be the largest possible value satisfying:
\begin{equation}\label{eq:ineq_rn}
    \sup_{x \in B_{\tilde{r}_n}}||H_{f_n}(x)-H_n|| \leq \frac{K_n^1}{2} \quad \quad\text{ and }\quad \tilde{r}_n \leq \frac{K_n^2}{2}.
\end{equation}
Now, let $r=\liminf_{n\rightarrow\infty}\tilde{r}_n$ and suppose that $r=0$. If this were the case, then we could find a convergent subsequence $\{\tilde{r}_{n_m}\}$ tending to zero. Noting that $K_{n_m}^2\rightarrow K^2 > 0$ and $\tilde{r}_{n_m} \rightarrow 0$, assume, for ease that $m$ is sufficiently large that $2\tilde{r}_{n_m}<\frac{K_{n_m}^2}{2}$.\\
\\
Now, by the supremum condition, it follows that, for each $m$, we can find $x_{n_m}\in B_{2\tilde{r}_{n_m}}$ such that $||H_{f_{n_m}}(x_{n_m})-H_{n_m}||\geq \frac{K_{n_m}^1}{2}$. Thus we have that:
\begin{equation}\nonumber
    \frac{K^1}{2} = \lim_{m\rightarrow \infty} \frac{K_{n_m}^1}{2} \leq \lim_{m\rightarrow \infty}\big|\big|H_{f_{n_m}}(x_{n_m})-H_{n_m}\big|\big| \leq
\end{equation}
\begin{equation}\nonumber
     \lim_{m\rightarrow \infty}\bigg[\big|\big|H_{f_{n_m}}(x_{n_m})-H_{f}(x_{n_m})\big|\big|+\big|\big|H_{f}(x_{n_m})-H\big|\big|+\big|\big|H-H_{f_{n_m}}(0)\big|\big|\bigg]=0
\end{equation}

\noindent
This is a contradiction, as $K^1>0$. Thus, noting that $r\geq 0$ by construction, it follows that $r>0$. Finally, for each $n$, define $r_n=\min(r,\tilde{r}_n)$. It follows that $r_n\rightarrow r$ and satisfies:

\begin{equation}\nonumber
    \sup_{x \in B_{r_n}}||H_{f_n}(x)-H_n|| < K_n^1  \quad\text{ and }\quad r_n < K_n^2.
\end{equation}

\noindent
Further, applying limits to both sides of \eqref{eq:ineq_rn}, it can be seen that an identical statement to the above holds with all subscript $n$'s removed. It now follows from Corollary \ref{corr:morse_constants}, that each $\Gamma_n$ and $\Gamma$ are well-defined bi-Lipschitz diffeomorphism on $B_{r_n}$ and $B_{r}$, respectively. We now define $\tilde{r}=\min(\lim\inf_{n}r_n, r)$. Using an identical argument to the above, it can be shown that for sufficiently large $n$, $\tilde{r}>0$. It follows that $\Gamma_n$ and $\Gamma$ are well-defined on $B_{\tilde{r}}$ for sufficiently large $n$.\\
\\
    All that remains to be shown now is that $|\Gamma_n-\Gamma|$ converges to zero uniformly on $B_{\tilde{r}}$. It is immediate from the above definitions that $|\phi_n-\phi|, |\nabla \phi_n - \nabla \phi|$ and, for any $t\in[0,1]$, $|y_{t,n}-y_t|$ all converge to zero uniformly. However, showing that  $|v_{t,n}-v_t|\rightarrow 0$ uniformly requires more care, due to the division by $y_{t,n}$ and the fact that $y_{t,n}(0)=0$. \\
    \\
    To show that $|v_{t,n}-v_t|\rightarrow 0$ uniformly on $B_{\tilde{r}}$, we first note that by Lemma \ref{lem:param} we have: 
    \begin{equation}\nonumber
        |x|\leq c^{-1}|y_t(x)| \quad \text{ and } \quad |x|\leq c_n^{-1}|y_{t,n}(x)|. 
    \end{equation}
    As in the proof of Lemma \ref{lem:morse} assume, without loss of generality that $\phi,\phi_n$ satisfy a Lipschitz condition on $B_{\tilde{r}}$ with constant $L<||H^{-1}||^{-1}$ and define $c:=(||H^{-1}||^{-1}-L)$ and $c_n:=(||H_n^{-1}||^{-1}-L)$. Following identical logic to that of the proof of Lemma \ref{lem:morse} (see Equation \eqref{eq:phi_bdd}), we obtain that $|\phi_n(x)|\leq L|x|^2$ and thus:
    \begin{equation}\label{eq:phi_bdds}
        \phi_n(x) \leq \frac{L}{c^2}|y_t(x)|^2, \quad \phi_n(x) \leq \frac{L}{c_n^2}|y_{t,n}(x)|^2 \quad \text{ and }\quad \phi_n(x) \leq \frac{L}{cc_n}|y_{t}(x)|\cdot|y_{t,n}(x)|,
    \end{equation}
    and, in addition, we also have that:
    \begin{equation}\label{eq:phi_diff_bdd}
    \begin{split}
        |\phi(x)-\phi_n(x)| & \leq \int_{0}^1 |\nabla \phi(tx) - \nabla \phi_n(tx)|\cdot |x| dt \leq \epsilon_{n}|x|. \\
    \end{split}
    \end{equation}
    where the first inequality follows by expressing $\phi$ and $\phi_n$ as path-integrals, and the second follows by defining $\epsilon_{n}:=\sup_{x\in B_{\tilde{r}}}|\nabla\phi(x)-\nabla \phi_n(x)|$. Now, consider the following inequality, which holds by basic manipulations:
    \begin{equation}\nonumber
            \bigg|\phi(x)|y_{t,n}(x)|^2 y_t(x)-\phi_n(x)|y_t(x)|^2 y_{t,n}(x)\bigg| \leq 
    \end{equation}
    \begin{equation}\nonumber
            |y_{t,n}(x)|^2 \cdot|y_t(x)| \cdot|\phi(x)-\phi_n(x)| + |\phi_n(x)|\cdot |y_t(x)|^2 \cdot |y_t(x)-y_{t,n}(x)| +
    \end{equation}
    \begin{equation}\label{eq:big_ineq2}
            |\phi_n(x)|\cdot |y_t(x)| \cdot \bigg| |y_t(x)|^2-|y_{t,n}(x)|^2\bigg| 
    \end{equation}
    We shall now place bounds on each of the three terms following the inequality. Starting with the first, we have:
    \begin{equation}\nonumber
        |y_{t,n}(x)|^2 \cdot|y_t(x)| \cdot|\phi(x)-\phi_n(x)| \leq \epsilon_{n}|y_{t,n}(x)|^2 \cdot|y_t(x)| \cdot |x| \leq c^{-1}\epsilon_{n}|y_{t,n}(x)|^2 \cdot|y_t(x)|^2,
    \end{equation}
    where the first inequality follows from Equation \eqref{eq:phi_diff_bdd} and the second follows from that fact $|x|\leq c^{-1}|y_t(x)|$. The second term of \eqref{eq:big_ineq2} can be bounded as follows:
    \begin{equation}\nonumber
        |\phi_n(x)|\cdot |y_t(x)|^2 \cdot |y_t(x)-y_{t,n}(x)| \leq \delta_n\frac{L}{c_n^2}|y_{t,n}(x)|^2\cdot |y_t(x)|^2,
    \end{equation}
    which follows by Equation \eqref{eq:phi_bdds} and setting $\delta_n:= \sup_{x\in B_{\tilde{r}}}|y_t(x)-y_{t,n}(x)|$.\\
    \\
    By expanding the square in the third term of \eqref{eq:big_ineq2} we obtain:
    \begin{equation}\nonumber
        |\phi_n(x)|\cdot |y_t(x)| \cdot \bigg| |y_t(x)|^2-|y_{t,n}(x)|^2\bigg| \leq |\phi_n(x)|\cdot |y_t(x)| \cdot | y_t(x)-y_{t,n}(x)|\cdot (|y_t(x)|+|y_{t,n}(x)|) 
    \end{equation}
    Bounding $|y_t-y_{t,n}|$ as before and rearranging yields:
    \begin{equation}\nonumber
        \leq \delta_n |\phi_n(x)|\cdot |y_t(x)|^2 +\delta_n |\phi_n(x)|\cdot|y_t(x)|\cdot|y_{t,n}(x)|.
    \end{equation}
    And employing Equation \eqref{eq:phi_bdds} gives:
    \begin{equation}\nonumber
        \leq \delta_n \bigg(\frac{L}{c_n^2}+\frac{L}{cc_n}\bigg)|y_{t,n}(x)|^2\cdot |y_t(x)|^2.
    \end{equation}
    Combining the three previous bounds, we obtain that:
    \begin{equation}\nonumber
        \bigg|\phi(x)|y_{t,n}(x)|^2 y_t(x)-\phi_n(x)|y_t(x)|^2 y_{t,n}(x)\bigg| \leq \bigg[\frac{\epsilon_n}{c}+\delta_n\bigg(\frac{2L}{c_n^2}+\frac{L}{cc_n}\bigg)\bigg]|y_{t,n}(x)|^2\cdot |y_t(x)|^2
    \end{equation}
    By dividing through both sides by $|y_{t,n}(x)|^2\cdot |y_t(x)|^2$ and noting the definition of $v_t$, we obtain that:
    \begin{equation}\nonumber
        |v_t(x)-v_{t,n}(x)| \leq \frac{\epsilon_n}{c}+\delta_n\bigg(\frac{2L}{c_n^2}+\frac{L}{cc_n}\bigg)
    \end{equation}
    The right hand side of the above tends to zero, and does not depend upon $x$. It therefore follows that $|v_t-v_{t,n}|$ converges to $0$ uniformly on $B_{\tilde{r}}$. Making $\tilde{r}$ smaller if necessary, it now follows from Lemma \ref{lem:flow_conv} that $|\Gamma_n-\Gamma|\rightarrow 0$ uniformly on $B_{\tilde{r}}$ as desired.\\
    \\
    To obtain the result in the form given, we need to translate back $f_n(x)\mapsto f_n(x+p_n)$ and $f(x)\mapsto f(x+p)$. Applying the corresponding translations to $\Gamma_n$ and $\Gamma$, we see that $|\Gamma_n-\Gamma|\rightarrow 0$ uniformly on $B_{\tilde{r}}(p_n)\cap B_{\tilde{r}}(p)$. To complete the proof we need only note that $p_n\rightarrow p$ and thus we can choose $N\in\mathbb{N}$ and $\tilde{r}'>0$ such that $B_{\tilde{r}'}(p)\subseteq B_{\tilde{r}}(p_n)\cap B_{\tilde{r}}(p)$ for all $n\geq N$. Thus, $|\Gamma_n-\Gamma|$ is well defined on $B_{\tilde{r}'}(p)$ for all but finitely many $n$ and $|\Gamma_n-\Gamma|\rightarrow 0$ uniformly on $B_{\tilde{r}'}(p)$.
\end{proof}

\begin{mdframed}
    \begin{thm}[Convergence of Morse Points]\label{thm:morse}
         Suppose that $S$ is a compact $C^2$ manifold with boundary. Let $f_n:S\rightarrow \mathbb{R}$ be $C^2$, $f:S\rightarrow \mathbb{R}$ be Morse and suppose that Assumption \ref{assump:no_bdry_crits} holds. If $f_n\xrightarrow{C^2}f$, then, for each $\lambda$: 
        \begin{equation*}
            \lim_{n\rightarrow \infty} N^M_\lambda(f_n) = N^M_\lambda(f)
        \end{equation*}
    \end{thm}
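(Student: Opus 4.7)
The plan is to bound $N^M_\lambda(f_n)$ from above and below by $N^M_\lambda(f)$ and combine. Since $f$ is Morse on the compact manifold $S$, it has only finitely many critical points $p^{(1)},\ldots,p^{(K)}$, with Morse indices $\lambda_1,\ldots,\lambda_K$. The lower bound $\liminf_{n\to\infty}N^M_\lambda(f_n)\geq N^M_\lambda(f)$ is essentially a corollary of Lemma \ref{lemma:seq}: each $p^{(i)}$ yields a sequence $p_n^{(i)}\to p^{(i)}$ of eventually Morse critical points of $f_n$ with the same index $\lambda_i$, and since the $p^{(i)}$ are distinct, the $p_n^{(i)}$ are also distinct for large $n$.

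The main work is in the matching upper bound. First I would apply Lemma \ref{lemma:mfld_to_R} to choose disjoint chart neighborhoods $B_{\eta_i}(p^{(i)})$ such that, for $n$ sufficiently large, all critical points of $f_n$ lie in $\bigcup_i B_{\eta_i}(p^{(i)})$. It then suffices to show that, for large $n$, each $B_{\eta_i}(p^{(i)})$ contains exactly one critical point of $f_n$ and that its Morse index equals $\lambda_i$. For this I would invoke Theorem \ref{thm:constants_lemma} to obtain Morse charts $\Gamma_n$ and $\Gamma$ with $|\Gamma_n-\Gamma|\to 0$ uniformly on a common ball $B_{\tilde r}(p^{(i)})$. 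Since $f_n\circ\Gamma_n$ is a non-degenerate quadratic form with unique critical point at $p_n^{(i)}$, the image $\Gamma_n(B_{\tilde r}(p^{(i)}))$ contains at most one critical point of $f_n$, namely $\Gamma_n(p_n^{(i)})=p_n^{(i)}$.

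The main obstacle I anticipate is ensuring that a fixed neighborhood $B_{\eta_i}(p^{(i)})$ actually sits inside the moving image $\Gamma_n(B_{\tilde r}(p^{(i)}))$ for all large $n$; only then does the ``at most one critical point'' conclusion translate into a statement about $B_{\eta_i}(p^{(i)})$. The limit $\Gamma$ is a bi-Lipschitz diffeomorphism fixing $p^{(i)}$, so its image contains some $B_\rho(p^{(i)})$ with $\overline{B_\rho(p^{(i)})}\subset\Gamma(\mathrm{Int}(B_{\tilde r}(p^{(i)})))$. For any $y\in B_\rho(p^{(i)})$, a degree-theoretic homotopy argument using the uniform convergence $\Gamma_n\to\Gamma$ shows that the degree of $\Gamma_n(\cdot)-y$ on $\partial B_{\tilde r}(p^{(i)})$ equals that of $\Gamma(\cdot)-y$ (which is $\pm 1$, since $\Gamma$ is a homeomorphism), so $y\in\Gamma_n(B_{\tilde r}(p^{(i)}))$. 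Shrinking $\eta_i$ to be at most $\rho$ then gives the desired containment.

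Finally, the Morse index of $p_n^{(i)}$ coincides with $\lambda_i$ for large $n$ by uniform convergence of $H_{f_n}$ and continuity of eigenvalues, exactly as in Lemma \ref{lemma:seq}. Summing over those $i$ with $\lambda_i=\lambda$ yields $N^M_\lambda(f_n)=N^M_\lambda(f)$ for all sufficiently large $n$, which is stronger than the claimed convergence. The delicate step throughout is the covering argument in paragraph three; every other piece is an essentially routine application of a previously established lemma.
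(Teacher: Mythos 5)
Your proposal is correct and follows essentially the same route as the paper: Lemma \ref{lemma:seq} supplies the nearby critical points of $f_n$ with matching Morse index, Theorem \ref{thm:constants_lemma} supplies the Morse charts $\Gamma_n,\Gamma$, and the uniqueness of the critical point of the non-degenerate quadratic normal form rules out a second critical point of $f_n$ near each critical point of $f$. The only deviation is your containment step, where you use a degree-theoretic homotopy argument to place a fixed ball inside the moving image $\Gamma_n(B_{\tilde r}(p^{(i)}))$, while the paper localizes via the region $\Gamma(B_{R}(p))$ (outside of which $\nabla f_n$ is eventually nonvanishing) and obtains the nesting $\Gamma(B_{R}(p))\subseteq\Gamma_n(B_{\tilde r}(p))$ from its Lemma \ref{lem:images_nested}; both devices are valid.
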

    \vspace*{0.3mm}
\end{mdframed}
\begin{proof} Assume, for ease, that $f$ has a single Morse point $p$ with index $\lambda$ and $S=B_\eta(p)$. By Lemma \ref{lemma:seq}, we can find $p_n\rightarrow p$ such that, for $n$ large enough, each $p_n$ is a Morse point of $f_n$ with index $\lambda$. By Theorem \ref{thm:constants_lemma}, for $n$ large enough, we can find $\tilde{r}\in(0,\eta/2]$, and diffeomorphisms $\Gamma_n$ and $\Gamma$ defined on $B_{\tilde{r}}(p)$ such that $|\Gamma_n-\Gamma|\rightarrow 0$ and:
\begin{equation}\nonumber
    f_n(\Gamma_n(x)) = \frac{1}{2}(x-p_n)'H_{f_n}(p_n)(x-p_n).  
\end{equation}
Let $0<R<\tilde{r}$ and $M= \inf_{x \in S \setminus \Gamma(B_{R}(p))}|\nabla f(x)|$. Suppose $M=0$. It follows we can find $x \in \overline{S \setminus \Gamma(B_{R}(p))}$ such that $|\nabla f(x)|=0$. Thus, $x\neq p$ is a critical point of $f$. However, as $f$ is Morse and $p$ is the only Morse point of $f$, it follows that $x$ cannot be a critical point of $f$. This is a contradiction and thus $M>0$.\\
\\
Assume $n$ is large enough that $|\nabla f-\nabla f_n|< M/2$. It follows that for $x \in S \setminus\Gamma(B_{R}(p))$, we have $|\nabla f_n (x)| \geq \frac{M}{2} > 0$. It follows that, for suitably large $n$, $f_n$ has no Morse points outside $\Gamma(B_{R}(p))$, but \textit{at least one} Morse point inside $\Gamma(B_{R}(p))$. All that remains to be shown is that, for sufficiently large $n$, $f_n$ has \textit{exactly one} Morse point inside $\Gamma(B_{R}(p))$. \\
\\
To show this, suppose that $p_n\neq p_n'$ are Morse points of $f_n$ inside $\Gamma(B_{R}(p))$. By Lemma \ref{lem:images_nested}, it follows that, for $n$ large enough, $\Gamma(B_{R}(p))\subseteq \Gamma_n(B_{\tilde{r}}(p))$. Let $s_n = \Gamma_n^{-1}(p_n)$ and $s_n'= \Gamma_n^{-1}(p_n')$. By the bijectivity of $\Gamma_n$, we have that $s_n\neq s_n'$. However, we also have:
\begin{equation}\nonumber
    \nabla(f_n \circ \Gamma_n)(s_n) = (\nabla f_n)(\Gamma_n(s_n)) \cdot \nabla \Gamma_n (s_n)  = \nabla f_n(p_n) \cdot \nabla \Gamma_n (s_n) = 0.
\end{equation}
as $p_n$ is a Morse point of $f_n$ and thus $\nabla f_n(p_n)=0$. Similarly, we have that $\nabla(f_n \circ \Gamma_n)(s_n')=0$. Now, note that as $p_n$ is a Morse point of $f_n$, $H_{f_n}(p_n)$ is full rank. Thus:
\begin{equation}\nonumber
    \nabla(f_n \circ \Gamma_n)(x)=(x-p_n)'H_{f_n}(p_n)=0\quad \text{ if and only if }\quad x=p_n.
\end{equation}
Therefore $s_n=s_n'=p_n$. It now follows that there can only be one morse point of $f_n$ inside $\Gamma(B_{R}(p))$. Thus, for $n$ large enough $f_n$ has exactly one Morse point, with index $\lambda$. Noting Lemma \ref{lemma:mfld_to_R}, we can generalize to the case where $S$ is an arbitrary compact $C^2$ manifold with boundary to see that there is a one-to-one correspondence between the Morse points of index $\lambda$ of $f_n$ and those of $f$. The result now follows.
\end{proof}

\section{Probabilistic Results}\label{sec:prob}

In this section, we use the results of the previous to make statements about the convergence of random processes. Following the empirical processes framework of \cite{van1996weak}, we define a `random process' to be a random variable $G$\nomenclature{$G_n$}{Empirical random process derived from $n$ observations}\nomenclature{$G$}{Limiting random process, $G_n\rightsquigarrow G$} whose values are continuous scalar functions. Formally, given a probability space $(\Omega, \mathcal{F}, \mathbb{P})$\nomenclature{$(\Omega, \mathcal{F}, \mathbb{P})$}{Probability space}, a random process maps events $\omega \in \Omega \mapsto f \in \mathbf{F}$ where $\mathbf{F}\subseteq C(S,\mathbb{R})$\nomenclature{$\mathbf{F}$}{Arbitrary function space $\mathbf{F}\subseteq C(S,\mathbb{R})$}. Following standard convention, we will drop all $\omega$ in our notation. As before, $S$ is a compact manifold with boundary and, so that we may use the more convenient form of Assumption \ref{assumption:crit}, we now additionally assume that $S$ is also a metric space.

As the measurability of some of the events we consider is not guaranteed, we shall employ the notion of outer probability, defined for $B\subseteq\Omega$, by $\mathbb{P}^*[B]=\inf \{\mathbb{P}[A]: A \supseteq B, A \in \mathcal{F}\}$\nomenclature{$\mathbb{P},\mathbb{P}^*$}{Probability, outer probability}. As is convention in the setting of empirical processes, we will be interested in a sequence of random processes $\hat{G}_n$ converging weakly to $G$, denoted $\hat{G}_n\rightsquigarrow G$\nomenclature{$\rightsquigarrow$}{Weak convergence}. Following \cite{van1996weak}, we denote convergence in outer probability as $\xrightarrow{P*}$\nomenclature{$\xrightarrow{P*}$}{Convergence in outer probability} and almost uniform convergence as $\xrightarrow{a.u.}$\nomenclature{$\xrightarrow{a.u.}$}{Almost uniform convergence}. For completeness, a list of standard definitions and lemmas used in this section can be found in Supplementary Material section \ref{app:prob}.

For notational ease, we define the following random variables:
\begin{equation}\nonumber
    L := \sup_{s \in \partial S} |\nabla G(s)|, \quad R := \inf_{s_1,s_2\in Z(\nabla G)} |s_1-s_2| \quad \text{ and } \quad \hat{R}_n := \inf_{s_1,s_2\in Z(\nabla \hat{G}_n)} |s_1-s_2|.
\end{equation}\nomenclature{$L$}{Random variable representing the smallest observed gradient of $G$ on $\partial S$}\nomenclature{$R$}{Random variable representing the smallest distance between critical points of $G$}\nomenclature{$\hat{R}_n$}{Random variable representing the smallest distance between critical points of $\hat{G}_n$}
We are now in a position to state a probabilistic version of Theorems \ref{thm:homological_index} and \ref{thm:not_quite_morse}.
\begin{mdframed}
\begin{thm}[Probabilistic Homological Index Convergence]\label{thm:prob_hom}
     Let $S$ be a compact $C^1$ manifold with boundary and a metric space. Suppose $\{\hat{G}_n\}$ and $G$ are random processes which take their values in $C^1(S,\mathbb{R})$ and satisfy the following:
    \begin{enumerate}[label=(\subscript{H}{{\arabic*}})]
        \item $(\hat{G}_n, \nabla \hat{G}_n)\rightsquigarrow (G, \nabla G)$ where $(G, \nabla G)$ is separable,
        \item $\mathbb{P}^*[L=0]=0$ and $\mathbb{P}^*[R=0]=0$,
        \item There exists $\delta >0$ such that $\sum_{n=1}^\infty \mathbb{P}^{*}[\hat{R}_n < \delta] < \infty$,
        \item $N_M(G), N_m(G), N_S(G)$ and $\{N_\lambda^H(G)\}_{\lambda \neq 0}$ are Borel measurable.
    \end{enumerate}
    then the following convergences hold:
    \begin{equation}\label{eq:conv_res}
        N_M(\hat{G}_n)\rightsquigarrow N_M(G), \quad N_m(\hat{G}_n)\rightsquigarrow N_m(G), \quad N_S(\hat{G}_n)\rightsquigarrow N_S(G), \quad N^H_\lambda(\hat{G}_n)\rightsquigarrow N^H_\lambda(G)
    \end{equation}
    for $\lambda\neq 0$. If, in addition, the below assumption is satisfied:
    \begin{enumerate}[label=(\subscript{H}{{\arabic*}})]
        \item[($H_5$)] $\mathbb{P}^*[N^H_0(G) = 0]=1$,
    \end{enumerate}
    then $N_C(\hat{G}_n)\rightsquigarrow N_C(G)$.
    \vspace{0.2cm}
\end{thm}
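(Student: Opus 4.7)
The plan is to deduce this probabilistic result from the deterministic Theorems \ref{thm:homological_index} and \ref{thm:not_quite_morse} by passing to an almost-sure representation of the weakly converging processes. First, I will invoke $(H_1)$ and the separability of $(G, \nabla G)$ to apply the almost-sure representation theorem of \cite{van1996weak} (Theorem 1.10.4), which yields, on a suitable probability space $(\tilde{\Omega}, \tilde{\mathcal{F}}, \tilde{\mathbb{P}})$, versions $(\tilde{G}_n, \nabla \tilde{G}_n)$ and $(\tilde{G}, \nabla \tilde{G})$ sharing the laws of the originals, with $\tilde{G}_n \xrightarrow{C^1} \tilde{G}$ almost uniformly. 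A small check is needed that the second coordinate is indeed the gradient of the first, but this is automatic since the joint law of $(\hat{G}_n, \nabla \hat{G}_n)$ is supported on the graph of the gradient operator on $C^1(S,\mathbb{R})$.

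Next, I will verify the hypotheses of Theorem \ref{thm:not_quite_morse} hold for the tilded versions on a set of full outer $\tilde{\mathbb{P}}$-measure. Letting $\tilde L, \tilde R, \tilde R_n$ denote the analogues of $L, R, \hat R_n$, assumption $(H_2)$ transfers by equal laws to give $\tilde L > 0$ and $\tilde R > 0$ outside a null set, which respectively enforce Assumption \ref{assump:no_bdry_crits} for $\tilde{G}$ and, by compactness of $S$, isolated (hence finitely many) critical points. For the critical point resolution of $\{\tilde{G}_n\}$, I will run an outer-probability Borel-Cantelli argument: by definition of outer probability one may choose Borel sets $\tilde{B}_n \supseteq \{\tilde{R}_n < \delta\}$ with $\sum_n \tilde{\mathbb{P}}[\tilde{B}_n] < \infty$, so $\tilde{\mathbb{P}}[\limsup_n \tilde{B}_n] = 0$, and thus outside a null set $\tilde{R}_n \geq \delta$ for all sufficiently large $n$, securing Assumption \ref{assumption:crit}. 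On the intersection $A$ of these full-measure events with the almost-uniform $C^1$-convergence set, Theorems \ref{thm:homological_index} and \ref{thm:not_quite_morse} apply pathwise, yielding $N_M(\tilde{G}_n) \to N_M(\tilde{G})$, $N_m(\tilde{G}_n) \to N_m(\tilde{G})$, $N_S(\tilde{G}_n) \to N_S(\tilde{G})$, and $N^H_\lambda(\tilde{G}_n) \to N^H_\lambda(\tilde{G})$ for each $\lambda \neq 0$, pointwise on $A$.

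Since the counts are integer-valued, pointwise convergence on $A$ promotes to convergence in outer probability, and combining this with the Borel measurability from $(H_4)$ and the equality of laws between the tilded and original processes yields the weak convergences in \eqref{eq:conv_res} via the empirical-processes portmanteau theorem. For the $N_C$ statement, $(H_5)$ additionally ensures $\tilde{G}$ has no undulation points on a further full-measure set, so the second half of Theorem \ref{thm:not_quite_morse} gives $N_C(\tilde{G}_n) \to N_C(\tilde{G})$ there, whence $N_C(\hat{G}_n) \rightsquigarrow N_C(G)$. The hard part will be the measurability bookkeeping throughout: the almost-sure representation produces only almost uniform (not genuine almost sure) convergence, and the Borel-Cantelli and portmanteau steps must be phrased carefully in terms of outer probability and the specific measurability granted by $(H_4)$. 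A secondary subtlety, resolved by the graph-support observation above, is ensuring the joint representation preserves the derivative structure rather than merely the marginal laws.
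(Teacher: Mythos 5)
Your proposal is correct and follows essentially the same route as the paper: pass to an almost-uniform (almost sure) representation via $(H_1)$, use Borel--Cantelli in outer probability to turn $(H_3)$ into the critical point resolution assumption, apply the deterministic Theorems \ref{thm:homological_index} and \ref{thm:not_quite_morse} pathwise on the good event where $\tilde L>0$, $\tilde R>0$ and $\liminf_n\tilde R_n\geq\delta$, and then convert back to weak convergence using $(H_4)$ and equality of laws, with $(H_5)$ handling $N_C$. The only cosmetic difference is that the paper handles the almost-uniform-convergence subtlety you flag by an explicit $\epsilon$-argument with a measurable set $A$ of probability at least $1-\epsilon$ and subadditivity of the outer measure, while your graph-support remark on preserving the gradient structure is a small extra check the paper leaves implicit.
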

\end{mdframed}
\begin{remark}
    Conditions like $(H_1)$ typically arise via central limit theorems and are not uncommon in the literature (c.f. \cite{adler1981geometry}, \cite{chung2020introductionrandomfields}). The condition $(H_2)$ essentially says critical points of $G$ are almost surely isolated and inside the interior of $S$; such assumptions can also be found in \cite{davenport2022confidenceregionslocationpeaks}. 
    
    Condition $(H_3)$ is a little subtler, but essentially says that the critical points of $\hat{G}_n$ are not likely to become $\delta$-close as $n\rightarrow \infty$. A common example in which such a condition may be assumed is when $\hat{G}_n$ has been derived by smoothly interpolating data that was recorded on a fixed evenly-spaced lattice. In such situations, the distance between critical points is typically bounded below by the resolution of the lattice, and thus $\mathbb{P}^{*}[\hat{R}_n < \delta] =0$ for all $n$ if $\delta$ is chosen to be sufficiently smaller than the distance between gridpoints. $(H_4)$ is a standard measurability constraint and $(H_5)$ assumes that the probability of seeing an undulation point in the limiting process is zero, which matches practical experience in most settings.
\end{remark}
\begin{proof}
    To begin, let $(\tilde{G}_n, \nabla \tilde{G}_n)\xrightarrow{a.u.} (\tilde{G}, \nabla \tilde{G})$ be almost sure representations of $(\hat{G}_n, \nabla \hat{G}_n)\rightsquigarrow (G, \nabla G)$ (c.f. Lemma \ref{thm:as_reps}), with $\tilde{L}$, $\tilde{R}$ and $\tilde{R}_n$ defined analogously. By $(H_3)$ and the Borel-Cantelli lemma, Lemma \ref{lem:borelcantelli}, we have that:
    \begin{equation}\nonumber
        \mathbb{P}^*\bigg[\limsup_{n\rightarrow\infty}\{\tilde{R}_n<\delta\}\bigg] = \mathbb{P}^*\bigg[\liminf_{n\rightarrow\infty}\tilde{R}_n<\delta \bigg] = 0.
    \end{equation}
    Now, let $N$ be any of the convergent functions in \eqref{eq:conv_res} and $\epsilon > 0$ be arbitrary. As $(\tilde{G}_n, \nabla \tilde{G}_n)\xrightarrow{a.u.} (\tilde{G}, \nabla \tilde{G})$, we can find measurable $A\subseteq \Omega$ such that $\mathbb{P}[A]\geq 1-\epsilon$ and $\tilde{G}_n\xrightarrow{C^1} \tilde{G}$  uniformly over $\omega \in A$. Thus, we have:
    \begin{equation}\nonumber
        \begin{split}
            \mathbb{P}[A]=\mathbb{P}^{*}[A] & \leq \mathbb{P}^{*}\bigg[A \land \bigg\{\big\{\tilde{L}=0\big\} \lor \big\{\tilde{R}=0\big\} \lor \big\{\liminf_{n\rightarrow\infty}\tilde{R}_n < \delta \big\}\bigg\}\bigg] \\
            &  + \mathbb{P}^{*}\bigg[A \land \big\{\tilde{L}>0\big\} \land \big\{\tilde{R}>0\big\} \land \big\{\liminf_{n\rightarrow\infty}\tilde{R}_n \geq \delta \big\}\bigg]
        \end{split}
    \end{equation}
    by subadditivity of the outer measure. However, again by subadditivity, we now have that the above is less than or equal to:
    \begin{equation}\nonumber
        \begin{split}
            & \leq \mathbb{P}^{*}[\tilde{L}=0] +\mathbb{P}^{*}[\tilde{R}=0] + \mathbb{P}^{*}\bigg[\liminf_{n\rightarrow\infty}\tilde{R}_n < \delta \bigg] \\
            &  + \mathbb{P}^{*}\bigg[A \land \big\{\tilde{L}>0\big\} \land \big\{\tilde{R}>0\big\} \land \big\{\liminf_{n\rightarrow\infty}\tilde{R}_n \geq \delta \big\}\bigg].
        \end{split}
    \end{equation}
    But $\{\tilde{L}=0\}$ and $\{\tilde{R}=0\}$ both have zero outer probability by $(H_2)$ and $\{\liminf_{n\rightarrow\infty}\tilde{R}_n \leq \delta \}$ has zero outer probability by the above, thus first three terms above are zero. Now consider $\omega$ in the intersection event of the fourth term. By the definition of $A$, we have that realisations of $\tilde{G}_n$ and $\tilde{G}$ converge in the $C^1$ metric. As they satisfy $\tilde{L}>0$, the boundary condition, Assumption  \ref{assump:no_bdry_crits}, is also satisfied, and considering the definitions of $\tilde{R}_n$ and $R$ we see that Assumption \ref{assumption:crit} holds and $G$ has isolated critical points. Thus, we see that for any $\omega$ in this event, the assumptions of Theorems \ref{thm:homological_index} and \ref{thm:not_quite_morse} are satisfied. Therefore:
    \begin{equation}\nonumber
        1-\epsilon \leq \mathbb{P}^{*}\bigg[A \land \big\{\tilde{L}>0\big\} \land \big\{\liminf_{n\rightarrow\infty}\tilde{R}_n \geq \delta \big\}\bigg] \leq \mathbb{P}^*\bigg[\lim_{n\rightarrow \infty}N(\tilde{G}_n)= N(\tilde{G})\bigg]
    \end{equation}
    Letting $\epsilon\rightarrow 0$, it follows that $N(\tilde{G}_n)\xrightarrow{P*}N(\tilde{G})$ and thus, noting that $\tilde{G}_n$ and $\tilde{G}$ are equal in law to $\hat{G}_n$ and $G$, $N(\hat{G}_n)\rightsquigarrow N(G)$, as desired (c.f. Lemma \ref{lem:out_implies_weak}). Noting the final line of Theorem \ref{thm:homological_index}, we see that when $(H_4)$ holds, the above also implies that $\mathbb{P}^*[\lim_{n\rightarrow\infty}N_0^H(\tilde{G}_n)=N_0^H(\tilde{G})]=1$, from which the convergence $N_C(\hat{G}_n)\rightsquigarrow N_C(G)$ can now be derived.
\end{proof}

A probabilistic variant of Theorem \ref{thm:morse} may also be given. To do so, however, we must first define the variable $M$ as follows:
\begin{equation}\nonumber
    M=\inf_{s \in S}\bigg(\max\big(|\nabla G(s)|, ||H_G(s)||\big)\bigg).
\end{equation}
Note that stating $M=0$ is equivalent to saying that $G$ has a degenerate critical point. \nomenclature{$M$}{Random variable defined such that $M=0$ if and only if $G$ is not Morse} Thus, if $M>0$ then $G$ is Morse.

\begin{mdframed}
\begin{thm}[Probabilistic Morse Index Convergence]\label{thm:prob_morse}
     Let $S$ be a compact $C^2$ manifold with boundary and a metric space. Suppose $\{\hat{G}_n\}$ and $G$ are random processes which take their values in $C^2(S,\mathbb{R})$ and satisfy the following:
    \begin{enumerate}[label=(\subscript{M}{{\arabic*}})]
        \item $(\hat{G}_n, \nabla \hat{G}_n, H_{\hat{G}_n})\rightsquigarrow (G, \nabla G, H_{G})$ where $(G, \nabla G, H_{G})$ is separable.
        \item $\mathbb{P}^*[L=0]$,
        \item $\mathbb{P}^*[M=0]$,
        \item $\{N_\lambda^M(G)\}_{\lambda \in \mathbb{N}}$ are Borel measurable.
    \end{enumerate}
    then the following convergences hold:
    \begin{equation}\label{eq:conv_res_morse}
        N_M(\hat{G}_n)\rightsquigarrow N_M(G), \quad N_m(\hat{G}_n)\rightsquigarrow N_m(G), \quad N_S(\hat{G}_n)\rightsquigarrow N_S(G) 
        \end{equation}
        \begin{equation}\nonumber
        N_C(\hat{G}_n) \rightsquigarrow N_C(G), \quad \text{ and } \quad N^M_\lambda(\hat{G}_n)\rightsquigarrow N^M_\lambda(G),
    \end{equation}
    for all $\lambda\in \{0,...,D\}$. 
    \vspace{0.2cm}
\end{thm}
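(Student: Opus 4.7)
The plan is to follow the template established by the proof of Theorem~\ref{thm:prob_hom}, substituting Theorem~\ref{thm:morse} in place of Theorems~\ref{thm:homological_index} and~\ref{thm:not_quite_morse}. The key deterministic observation is that if $f_n\xrightarrow{C^2}f$ on a compact $C^2$ manifold with boundary, $f$ is Morse, and Assumption~\ref{assump:no_bdry_crits} holds, then for all but finitely many $n$ the function $f_n$ is itself Morse with $N^M_\lambda(f_n)=N^M_\lambda(f)$ for every $\lambda$. This is precisely what the proof of Theorem~\ref{thm:morse} delivers: each Morse point of $f$ is matched by a unique Morse point of $f_n$ of the same index inside a small neighborhood, and Lemma~\ref{lemma:mfld_to_R} rules out critical points of $f_n$ outside the union of these neighborhoods. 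The absence of any critical-point-resolution hypothesis in the present theorem is compensated for by $(M_3)$, which forces $G$ to be Morse and so excludes the degenerate configurations that Assumption~\ref{assumption:crit} was designed to rule out in the $C^1$ setting.

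First, I would invoke the almost sure representation lemma (Lemma~\ref{thm:as_reps}) to obtain representatives $(\tilde{G}_n,\nabla\tilde{G}_n,H_{\tilde{G}_n})\xrightarrow{a.u.}(\tilde{G},\nabla\tilde{G},H_{\tilde{G}})$ of the weak convergence $(M_1)$, and define $\tilde{L}$ and $\tilde{M}$ in the obvious analogous way. For an arbitrary $\epsilon>0$, almost uniform convergence yields a measurable set $A\subseteq\Omega$ with $\mathbb{P}[A]\geq 1-\epsilon$ on which $\tilde{G}_n\xrightarrow{C^2}\tilde{G}$ uniformly. By the subadditivity of the outer measure, exactly as in the proof of Theorem~\ref{thm:prob_hom},
\begin{equation}\nonumber
\mathbb{P}[A]\;\leq\;\mathbb{P}^{*}[\tilde{L}=0]+\mathbb{P}^{*}[\tilde{M}=0]+\mathbb{P}^{*}\bigg[A\cap\{\tilde{L}>0\}\cap\{\tilde{M}>0\}\bigg],
\end{equation}
and $(M_2)$ together with $(M_3)$ ensure that the first two terms vanish.

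On the event $B:=A\cap\{\tilde{L}>0\}\cap\{\tilde{M}>0\}$, the realisation of $\tilde{G}$ has no degenerate critical points (since $\tilde{M}>0$) and no critical points on $\partial S$ (since $\tilde{L}>0$), while $\tilde{G}_n\xrightarrow{C^2}\tilde{G}$ uniformly. The hypotheses of Theorem~\ref{thm:morse} are therefore met pointwise on $B$, giving $N^M_\lambda(\tilde{G}_n)\to N^M_\lambda(\tilde{G})$ for every $\lambda$. Moreover, the proof of Theorem~\ref{thm:morse}, together with Lemma~\ref{lemma:mfld_to_R}, shows that on $B$ the function $\tilde{G}_n$ is itself eventually Morse, so for all sufficiently large $n$ the identities $N_m=N^M_0$, $N_M=N^M_D$, $N_S=\sum_{\lambda=1}^{D-1}N^M_\lambda$ and $N_C=\sum_{\lambda=0}^{D}N^M_\lambda$ hold for both $\tilde{G}_n$ and $\tilde{G}$. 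Consequently, each of the four aggregate counts also converges on $B$, so that $\mathbb{P}^{*}[\lim_n N(\tilde{G}_n)=N(\tilde{G})]\geq 1-\epsilon$ for every count $N$ appearing in the theorem statement.

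Letting $\epsilon\to 0$ yields convergence in outer probability $N(\tilde{G}_n)\xrightarrow{P*}N(\tilde{G})$ for each such $N$, and Lemma~\ref{lem:out_implies_weak}, combined with the measurability hypothesis $(M_4)$, translates this into the asserted weak convergences $N(\hat{G}_n)\rightsquigarrow N(G)$. The only real subtlety is the one already encountered in the proof of Theorem~\ref{thm:prob_hom}: a careful accounting of outer-probability manipulations, since the events in question need not be Borel measurable a priori. All of the Morse-theoretic content is otherwise packaged inside Theorem~\ref{thm:morse}.
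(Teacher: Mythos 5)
Your proposal is correct and follows essentially the same route as the paper: obtain almost sure representations of the weak convergence in $(M_1)$, use subadditivity of outer probability together with $(M_2)$ and $(M_3)$ to reduce to an event on which the realisations satisfy the hypotheses of Theorem~\ref{thm:morse} (with $\tilde{M}>0$ guaranteeing Morseness), deduce $N^M_\lambda(\tilde{G}_n)\rightarrow N^M_\lambda(\tilde{G})$ there, let $\epsilon\rightarrow 0$, and pass back to weak convergence via Lemma~\ref{lem:out_implies_weak}, with the aggregate counts $N_M,N_m,N_S,N_C$ recovered from the $N^M_\lambda$. Your explicit remark that no critical-point-resolution event is needed (since $(M_3)$ already forces the limit to be Morse) is a slightly cleaner accounting than the paper's own write-up, which carries over the $\tilde{R}_n$ event from Theorem~\ref{thm:prob_hom} despite no corresponding hypothesis in this theorem.
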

\end{mdframed}
\begin{proof}
    Using identical logic to the proof of Theorem \ref{thm:prob_hom} and defining $\tilde{M}$ analogously, we see that $\hat{G}_n$ and $G$ have almost sure representations given by $\tilde{G}_n$ and $\tilde{G}$ such that:
        \begin{equation}\nonumber
        1-\epsilon \leq \mathbb{P}^{*}\bigg[A \land \big\{\tilde{L}>0\big\} \land \big\{\liminf_{n\rightarrow\infty}\tilde{R}_n \geq \delta \big\} \land \{\tilde{M}>0\}\bigg].
    \end{equation}
    Applying Theorem \ref{thm:morse}, noting that $M>0$ implies that $G$ is Morse, and taking the limit as $\epsilon \rightarrow 0$, we obtain:
    \begin{equation}\nonumber
        \mathbb{P}^*\bigg[\lim_{n\rightarrow\infty}N_\lambda^M(\tilde{G}_n)=N_\lambda^M(\tilde{G})\bigg]=1
    \end{equation}
    for any $\lambda \in \mathbb{N}$. Thus $N_\lambda^M(\tilde{G}_n)\rightsquigarrow N_\lambda^M(\tilde{G})$ and therefore $N_\lambda^M(\hat{G}_n)\rightsquigarrow N_\lambda^M(G)$. The remaining results follow by noting the relations between $N_M, N_m, N_S, N_C$ and $\{N_\lambda^M\}_{\lambda \in \mathbb{N}}$.
    
\end{proof}

\section{Discussion}\label{sec:discussion}

In this work, we have provided theorems on the convergence of the number critical points under various regularity assumptions. To aid practical use, our theorems consider a range of theoretical assumptions, and have also been expressed in the language of empirical processes in Section \ref{sec:prob}.

A natural question is whether this work could be extended to describe the convergence properties of other topological invariants, such as Betti numbers or the Euler characteristic. For instance, for many applications, it is desirable to estimate the Euler characteristic of an excursion set of the form $\mathcal{A}_c:=\{s\in X: f(s)\geq c\}$, for some known predefined threshold $c\in\mathbb{R}$. Defining $\hat{\mathcal{A}}_c:=\{s\in X: f_n(s)\geq c\}$, it might be asked if our proofs can be used to show that $\chi_n(\hat{\mathcal{A}}_c)\rightarrow\chi(\mathcal{A}_c)$ as $n\rightarrow \infty$. Such a question is well-founded as, under mild assumptions, the Euler characteristic of a $C^2$ manifold with boundary $S$ can be expressed as $\chi(S):=\sum_\lambda (-1)^\lambda N_C(f)$ for any Morse $f:S\rightarrow \mathbb{R}$. However, showing the desired convergence is not straightforward as, in this instance, not only is $f_n$ varying with $n$, but also the spatial domain $\hat{\mathcal{A}}_c$ itself. As we have more to say on this topic, we intend to address this question further in future work.

Another question of potential interest is whether the theorems hold under different assumptions placed on the domain $S$. To address this question, we claim, but do not expound specifics, that our proofs easily extend to cases in which $S$ is either a manifold with corners or a Whitney stratified manifold. We highlight this, as such settings are common in the literature, and may be found in texts such as \cite{nicolaescu2007invitation} and \cite{Adler2009} respectively.

\section*{Acknowledgements}

The authors would like to thank Prof.~Thomas E.~Nichols and Prof.~Armin Schwartzman for their thoughtful comments regarding early iterations of this work. TM would also like to express strong gratitude to Dr.~Sam Power for his insights on optimisation methods for bang-bang controllers, which informed the development of a preliminary version of the counterexample in Supplementary Section \ref{supp:cantor}, albeit one that was not included in the final work.
 
\newpage
\printnomenclature

\newpage

\bibliographystyle{unsrt}
\bibliography{References.bib}

\newpage
\appendix
\appendixpage
\addappheadtotoc
\section{Supporting Lemmas}\label{app:lemmas}

\begin{mdframed}
    \begin{lemma}\label{lem:lipschitz}
        Suppose $\phi:S\rightarrow \mathbb{R}$ is $C^2$ with $H_\phi(0)=0$. Then, for all $L>0$, there exists an $R_0>0$ satisfying $L\geq \sup_{x\in B_{R_0}}||H_\phi(x)||$, such that $\nabla \phi$ is Lipschitz on $B_{R_0}$ with constant $L$.
    \end{lemma}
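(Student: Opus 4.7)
The plan is to exploit two facts: the continuity of the Hessian at $0$ (which gives the supremum bound on a small ball), and the fundamental theorem of calculus applied to $\nabla\phi$ along line segments inside that ball (which upgrades the Hessian bound to a Lipschitz estimate on $\nabla\phi$). Both steps are essentially routine once we note that a ball is convex.

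First I would fix $L>0$ and use the assumption $H_\phi(0)=0$ together with the continuity of $H_\phi$ (guaranteed by $\phi\in C^2$) to choose $R_0>0$ small enough that
\begin{equation}\nonumber
\sup_{x\in B_{R_0}}\|H_\phi(x)\| \;=\; \sup_{x\in B_{R_0}}\|H_\phi(x)-H_\phi(0)\| \;\leq\; L.
\end{equation}
This produces the first half of the conclusion immediately, and shrinking $R_0$ further if necessary we may also assume $B_{R_0}\subseteq S$ so that the Hessian is defined throughout.

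Next I would upgrade this pointwise bound to a Lipschitz estimate on $\nabla\phi$ over $B_{R_0}$. Since $B_{R_0}$ is convex, for any $x,y\in B_{R_0}$ the segment $\gamma(t)=y+t(x-y)$, $t\in[0,1]$, lies in $B_{R_0}$, and the fundamental theorem of calculus gives
\begin{equation}\nonumber
\nabla\phi(x)-\nabla\phi(y) \;=\; \int_0^1 H_\phi\bigl(y+t(x-y)\bigr)\,(x-y)\,dt.
\end{equation}
Taking norms, applying the induced matrix norm inequality $|Mv|\leq\|M\||v|$, and invoking the supremum bound from the previous step yields
\begin{equation}\nonumber
|\nabla\phi(x)-\nabla\phi(y)| \;\leq\; \Bigl(\sup_{z\in B_{R_0}}\|H_\phi(z)\|\Bigr)|x-y| \;\leq\; L|x-y|,
\end{equation}
which is exactly the desired Lipschitz bound.

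There is no real obstacle here; the only minor subtlety is ensuring that $B_{R_0}\subseteq S$ so that $H_\phi$ is actually defined on the segment between any two points of the ball, which is handled by shrinking $R_0$ if needed, and ensuring that the vector norm $|\cdot|$ and the matrix norm $\|\cdot\|$ used above are a compatible pair (which is assumed in the notation section of the paper). Continuity of $H_\phi$ at the single point $0$ is the only analytic input, and convexity of $B_{R_0}$ is what allows the mean-value argument to produce a global Lipschitz constant rather than a merely local one.
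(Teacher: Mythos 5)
Your argument is correct and is essentially identical to the paper's proof: both choose $R_0$ via continuity of $H_\phi$ at $0$ (where it vanishes) and then integrate the Hessian along the segment joining two points of the ball to obtain the Lipschitz bound with constant $L$. The minor points you flag (convexity of the ball, $B_{R_0}\subseteq S$, compatibility of the norms) are consistent with the paper's setup and do not change the argument.
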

\vspace{0.25cm}
\end{mdframed}
\begin{proof}
Fix $L>0$ and assume $S:=B_\eta$ (c.f Lemma \ref{lemma:mfld_to_R}). As $H_\phi(0)=0$ and $H_\phi$ is continuous, we can choose $R_0\in(0,\eta]$ such that $||H_\phi(x)||\leq L$ for $x\in B_{R_0}$. By mean value theorem, for $x,y\in B_{R_0}$:
\begin{equation}\nonumber
    |\nabla\phi(x)-\nabla\phi(y)| \leq \bigg|\int_0^1 H_{\phi}\big(x+t(x-y)\big)dt \cdot |x-y|\bigg| \leq L|x-y|
\end{equation}
as desired.
\end{proof}

\begin{mdframed}
    \begin{lemma}\label{lem:flow_conv}
        Let $R>0$ and suppose $\{v_{t,n}\}_{n\in\mathbb{N}}$ is a sequence of functions defined on $B_R\subseteq S$, uniformly converging to $v_t$. Assume further that $v_t$ is Lipschitz in $B_R$ with constant $L$, not dependent on $t$. If $\Gamma_t(x)$ and $\Gamma_{t,n}(x)$ are flows generated by the non-autonomous systems:
\begin{equation}\nonumber
    \frac{\partial }{\partial t}\Gamma_t(x) = v_t(\Gamma_t(x)), \quad \Gamma_0(x)=x, \quad \text{ and } \quad \frac{\partial }{\partial t}\Gamma_{t,n}(x) = v_{t,n}(\Gamma_{t,n}(x)), \quad \Gamma_{0,n}(x)=x,
\end{equation}
that are all well-defined on some $B_r$ for some $r>0$, then we have that $|\Gamma_{t,n}-\Gamma_t|\rightarrow 0$ uniformly on $B_{r'}$ for all $t\in [0,1]$ and some $0<r'<r$.
    \end{lemma}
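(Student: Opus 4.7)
\smallskip

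The plan is a classical Gr\"onwall argument. First, set $\epsilon_n := \sup_{t\in[0,1],\,x\in B_R}|v_{t,n}(x)-v_t(x)|$, which tends to $0$ by the uniform convergence hypothesis. Next, choose a radius $r'\in(0,r)$ small enough that for sufficiently large $n$, every flow line $t\mapsto \Gamma_t(x)$ and $t\mapsto\Gamma_{t,n}(x)$ started at $x\in B_{r'}$ stays inside $B_R$ for all $t\in[0,1]$. This is achievable by continuity of the (well-defined) flow $\Gamma_t$ on $B_r$ at the origin together with a continuity/compactness argument on $[0,1]\times\overline{B_{r'}}$, combined with the fact that $|\Gamma_{t,n}-\Gamma_t|$ can be made small uniformly on $[0,1]\times\overline{B_{r'}}$ (we will actually obtain a self-consistent bound as part of the Gr\"onwall step).

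The core computation is then to write the two flows in integral form and subtract:
\begin{equation}\nonumber
    \Gamma_t(x)-\Gamma_{t,n}(x) = \int_0^t \bigl[v_s(\Gamma_s(x))-v_{s,n}(\Gamma_{s,n}(x))\bigr]\,ds.
\end{equation}
Adding and subtracting $v_s(\Gamma_{s,n}(x))$ inside the integrand, taking norms, and using the Lipschitz bound on $v_t$ (with constant $L$ independent of $t$) together with the definition of $\epsilon_n$ yields
\begin{equation}\nonumber
    |\Gamma_t(x)-\Gamma_{t,n}(x)| \leq \epsilon_n\cdot t + L\int_0^t |\Gamma_s(x)-\Gamma_{s,n}(x)|\,ds.
\end{equation}
Applying the standard Gr\"onwall inequality gives
\begin{equation}\nonumber
    |\Gamma_t(x)-\Gamma_{t,n}(x)| \leq \epsilon_n\, t\, e^{Lt} \leq \epsilon_n\, e^{L},
\end{equation}
uniformly in $t\in[0,1]$ and $x\in B_{r'}$, which tends to $0$ as desired.

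The main obstacle is the confinement issue: to invoke the Lipschitz estimate for $v_s$ and the bound $\epsilon_n$ on the flow trajectories, both $\Gamma_s(x)$ and $\Gamma_{s,n}(x)$ must lie in $B_R$ throughout $[0,1]$. The clean way to resolve this is a bootstrap: fix a compact neighborhood $K\subset B_R$ of the image $\{\Gamma_s(x):s\in[0,1],\,x\in\overline{B_{r'}}\}$ (compact by continuity of $\Gamma$ and of the starting set), shrinking $r'$ if necessary so this image is strictly inside $B_R$. For each $n$, let $T_n\in[0,1]$ be the largest time up to which $\Gamma_{s,n}(x)$ stays in $B_R$ for all $x\in\overline{B_{r'}}$; the Gr\"onwall bound above, valid on $[0,T_n]$, shows $|\Gamma_{s,n}(x)-\Gamma_s(x)|\leq \epsilon_n e^L$ there. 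For $n$ large enough that $\epsilon_n e^L$ is smaller than the distance from $K$ to $\partial B_R$, a continuity argument then forces $T_n=1$, closing the bootstrap. All other steps are routine.
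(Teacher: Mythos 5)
Your proposal is correct and follows essentially the same route as the paper: write both flows in integral form, add and subtract $v_s(\Gamma_{s,n}(x))$, bound one term by the Lipschitz constant $L$ and the other by the uniform error $\epsilon_n$, and apply Gr\"onwall to obtain $|\Gamma_t(x)-\Gamma_{t,n}(x)|\leq \epsilon_n e^{L}$ on a slightly smaller ball $B_{r'}$. The only difference is that you handle the confinement of the perturbed trajectories in $B_R$ with an explicit bootstrap, whereas the paper simply shrinks $r'$ so that $\Gamma_t(B_{r'})\subseteq B_R$ and leaves the corresponding fact for $\Gamma_{t,n}$ implicit --- a welcome extra care, but not a different argument.
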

    \vspace{0.2cm}
\end{mdframed}
\begin{proof}
    Using the FTC we have that, for all $t\in[0,1]$:
    \begin{equation}\nonumber
        \Gamma_t(x) = x + \int_{s=0}^{t} v_t(\Gamma_t(x)) ds, \quad \text{ and }\quad\Gamma_{t,n}(x) = x + \int_{s=0}^{t} v_{t,n}(\Gamma_{t,n}(x)) ds.
    \end{equation}
    Therefore;
    \begin{equation}\nonumber
        |\Gamma_t(x) -\Gamma_{t,n}(x)| = \bigg|\int_{s=0}^{t} v_{t}(\Gamma_{t}(x)) - v_{t,n}(\Gamma_{t,n}(x)) ds\bigg| \leq
    \end{equation}
    \begin{equation}\nonumber
    \int_{s=0}^{t} |v_{t}(\Gamma_{t}(x)) - v_{t}(\Gamma_{t,n}(x))| ds+\int_{s=0}^{t} |v_{t}(\Gamma_{t,n}(x)) - v_{t,n}(\Gamma_{t,n}(x))| ds.
    \end{equation}
    By the uniform convergence of $v_{t,n}$ to $v_t$, the second term can be bounded by $t\epsilon_n<\epsilon_n$ where $\epsilon_n:=||v_t-v_{t,n}||\rightarrow 0$. Assume that $0<r'<r$ is small enough that $\Gamma_t(B_{r'})\subseteq B_{R}(0)$ for all $t\in [0,1]$, and assume $x \in B_{r'}$. Noting the Lipschitz condition on $v_t$, we now have:
    \begin{equation}\nonumber
        |\Gamma_t(x) -\Gamma_{t,n}(x)| \leq \epsilon_n +
    L \int_{s=0}^{t} |\Gamma_{t}(x) - \Gamma_{t,n}(x)| ds
    \end{equation}
    Applying Gr\"{o}wall's inequality, we now get that:
    \begin{equation}\nonumber
        |\Gamma_t(x) -\Gamma_{t,n}(x)| \leq \epsilon_n e^{Lt} \leq \epsilon_n e^{L}
    \end{equation}
    Noting that the right-hand side of the above does not depend on $x$ and tends to zero as $n\rightarrow \infty$ completes the proof.
\end{proof}

\begin{mdframed}
    \begin{lemma}\label{lem:images_nested}
        Let $0<r_1<r_2$, $p\in V$ and $S$ and $V$ be compact metric spaces with $B_{r_2}(p)\subset V$. Suppose, $f_n,f:V\rightarrow S$ are homeomorphisms with $f_n\rightarrow f$ uniformly on $V$. Then, for $n$ large enough, $f(B_{r_1}(p)) \subseteq f_n(B_{r_2}(p))$. 
    \end{lemma}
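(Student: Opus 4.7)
The plan is to reduce the nested-image statement to a uniform convergence statement about the inverse maps, and then finish by a one-line triangle-inequality argument. Specifically, I will first establish the auxiliary claim that $f_n^{-1} \to f^{-1}$ uniformly on $S$, and then use this to bound $d(f_n^{-1}(y),p)$ for $y \in f(B_{r_1}(p))$.

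Given the auxiliary claim, the conclusion is immediate. Let $\epsilon := r_2 - r_1 > 0$. By uniform convergence of the inverses, there exists $N$ such that $d(f_n^{-1}(y), f^{-1}(y)) < \epsilon$ for all $y \in S$ and all $n \geq N$. Pick any $y \in f(B_{r_1}(p))$ and set $x := f^{-1}(y) \in B_{r_1}(p)$, so $d(x,p)\leq r_1$. Then for $n \geq N$,
\begin{equation}\nonumber
    d(f_n^{-1}(y), p) \leq d(f_n^{-1}(y), f^{-1}(y)) + d(x, p) < \epsilon + r_1 = r_2,
\end{equation}
so $f_n^{-1}(y) \in B_{r_2}(p)$ and hence $y = f_n(f_n^{-1}(y)) \in f_n(B_{r_2}(p))$, as desired.

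The main work is in the auxiliary claim. My plan here is a standard compactness-plus-contradiction argument. Suppose, for contradiction, that there exist $\epsilon_0 > 0$, a subsequence $\{n_k\}$, and points $y_k \in S$ with $d(f_{n_k}^{-1}(y_k), f^{-1}(y_k)) \geq \epsilon_0$. Since $S$ is compact, pass to a further subsequence so that $y_k \to y_\infty$, and since $V$ is compact, pass again so that $f_{n_k}^{-1}(y_k) \to x_\infty$ in $V$. By continuity of $f^{-1}$, $f^{-1}(y_k) \to f^{-1}(y_\infty)$. The uniform convergence $f_{n_k} \to f$ together with the continuity of $f$ gives $f_{n_k}(f_{n_k}^{-1}(y_k)) \to f(x_\infty)$, but the left-hand side is just $y_k \to y_\infty$. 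Hence $f(x_\infty) = y_\infty = f(f^{-1}(y_\infty))$, and injectivity of $f$ forces $x_\infty = f^{-1}(y_\infty)$, contradicting the lower bound $\epsilon_0$ on the distances.

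I do not anticipate any serious obstacle; the only subtlety is the swap of limits in $f_{n_k}(f_{n_k}^{-1}(y_k)) \to f(x_\infty)$, which is justified by the standard fact that if $g_k \to g$ uniformly on a compact metric space and $z_k \to z$, then $g_k(z_k) \to g(z)$ (apply the triangle inequality to $|g_k(z_k)-g(z)| \leq |g_k(z_k)-g(z_k)| + |g(z_k)-g(z)|$ and use uniform convergence on the first term and continuity of $g$ on the second). Compactness of both $V$ and $S$ is essential for extracting convergent subsequences, but these are hypotheses of the lemma.
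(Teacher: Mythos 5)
Your proof is correct, and it takes a genuinely different route from the paper's. The paper works on the image side: it defines $\kappa_n:=\inf\{|s_1-s_2|: s_i\in\partial f_n(B_{r_i}(p))\}$, shows $\liminf_n\kappa_n=:\kappa>0$ by a compactness--contradiction argument (using injectivity of $f$ and uniform convergence, in the same spirit as your argument), and then concludes from the two inclusions $f(B_{r_1}(p))\subset (f_n(B_{r_1}(p)))_{\kappa/2}$ and $(f_n(B_{r_1}(p)))_{\kappa/2}\subset f_n(B_{r_2}(p))$ once $||f_n-f||_\infty<\kappa/2$. You instead prove uniform convergence of the inverses, $f_n^{-1}\rightarrow f^{-1}$ on $S$ (the same compactness--contradiction device, transplanted to the inverse maps), and finish on the domain side using the explicit slack $r_2-r_1$ as the threshold. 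What your route buys: you never need the identification $\partial f_n(B_{r_i}(p))=f_n(\partial B_{r_i}(p))$, nor the neighborhood inclusion $(f_n(B_{r_1}(p)))_{\kappa/2}\subset f_n(B_{r_2}(p))$ --- a step that in a general compact metric space requires more care than the triangle-inequality estimates suggest --- and your quantitative threshold is explicit, depending only on $r_2-r_1$; moreover, the auxiliary fact (uniform convergence of inverse homeomorphisms between compact metric spaces) is a clean, reusable statement. What the paper's route buys: it only ever estimates distances between forward images, whereas your argument leans on the surjectivity implicit in reading ``homeomorphism $f_n:V\rightarrow S$'' as onto $S$, so that $f_n^{-1}(y)$ is defined for $y\in f(B_{r_1}(p))$. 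That reliance is consistent with the lemma as stated, but it is worth keeping in mind when the lemma is invoked in Theorem \ref{thm:morse}, where $\Gamma_n$ and $\Gamma$ are homeomorphisms onto possibly different images; there your argument should be read with $S$ taken to be (a compact set containing) the relevant images, or with $f_n^{-1}$ applied only after checking $f(B_{r_1}(p))\subseteq f_n(V)$.
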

    \vspace{0.2cm}
\end{mdframed}
\begin{proof}
    For brevity denote $B_i:=B_{r_i}(p)$ for $i=1,2$. To begin, define $\kappa_n$ as:
    \begin{equation}\nonumber
        \kappa_n:=\inf\big\{|s_1-s_2|: s_i \in \partial f_n(B_i) \text{ for } i \in \{1,2\}\big\}
    \end{equation}
    Note that as $\partial f_n(B_1)\times \partial f_n(B_2)$ is closed, the above infinimum is attained at some $(s_1^n, s_2^n)$. For each $n$, choose such an $s_1^n\in\partial f_n(B_1)$ and $s_2^n\in\partial f_n(B_2)$ satisfying $\kappa_n=|s_1^n-s_2^n|$. Noting that each $f_n$ is a homeomorphism and $\overline{B_2}\subseteq V$ as $V$ is closed, we can also find $t_1^n\in \partial B_1$ and $t_2^n\in \partial B_2$ such that $s_1^n=f_n(t_1^n)$ and $s_2^n=f_n(t_2^n)$. \\
    \\
    Now suppose it were the case that $\liminf_{n\rightarrow\infty}\kappa_n = 0$. Then we would be able to find a subsequence $\{n_m\}$ such that:
    \begin{equation}\nonumber
        \lim_{m\rightarrow \infty}|s_1^{n_m}-s_2^{n_m}| = \lim_{m\rightarrow \infty}|f_{n_m}(t_1^{n_m})-f_{n_m}(t_2^{n_m})| = 0
    \end{equation}
    By compactness $\{t_1^{n_m}\}$ and $\{t_2^{n_m}\}$ have convergent subsequences. Without loss of generality, denote these subsequences again as $\{t_1^{n_m}\}$ and $\{t_2^{n_m}\}$ with limits given by $t_1^*$ and $t_2^*$, respectively. Note that by construction $t_1^*\in \partial B_1$ and $t_2^*\in \partial B_2$, so $t_1^*\neq t_2^*$. By the uniform convergence of $f_n$ to $f$, we see that for each $i\in\{1,2\}$:
    \begin{equation}\nonumber
        |f_{n_m}(t_i^{n_m})-f(t_i^*)| \leq ||f_{n_m}-f||_\infty + |f(t_i^{n_m})-f(t_i^*)|\xrightarrow{m\rightarrow\infty} 0
    \end{equation}
    Combining this with the above we see that:
    \begin{equation}\nonumber
        f(t_2^*)-f(t_1^*)  \leq \lim_{m\rightarrow \infty}|f(t_2^*)-f_{n_m}(t_2^{n_m})|+|f_{n_m}(t_1^{n_m})-f(t_1^*)|=0
    \end{equation}
    Thus, $f(t_2^*)=f(t_1^*)$. As $f$ is a homeomorphism we have that $t_2^*=t_1^*$. However, this is a contradiction, as $t_2^*\neq t_1^*$. Thus, $\liminf_{n\rightarrow\infty}\kappa_n = \kappa$ for some $\kappa>0$. It now follows from the definition of $\kappa$ that, for $n$ large enough, $(f_n(B_1))_{\frac{\kappa}{2}}\subset f_n(B_2)$.\\
    \\
    Finally assume that $n$ is large enough that $||f_n-f||_\infty<\frac{\kappa}{2}$. If $y\in f(B_1)$, it follows that there exists an $x\in B_1$ such that $f(x)=y$ and thus $|f_n(x)-f(x)|<\frac{\kappa}{2}$. Therefore, there exists $y'=f_n(x)\in f_n(B_1)$ such that $|y-y'|<\frac{\kappa}{2}$. As $y$ was arbitrary it follows that $f(B_1) \subset (f_n(B_1))_{\frac{\kappa}{2}}$. Combining with the above, yields  $f(B_1) \subset f_n(B_2)$, as desired.
\end{proof}

\section{The Poincar\'{e}-Hopf Theorem}\label{app:ph}

In this appendix, we state the generalized Poincar\'{e}-Hopf theorem, as given by \cite{jubin2009generalized}. For completeness, a definition of the notion of `orientability' employed in the statement of the theorem is given in Supplemental Material Section \ref{supp:orient}. For the purposes of the main text, it is sufficient to note that the closed $\eta$-ball in $\mathbb{R}^D$ is always trivially oriented and thus the below theorem can be applied in the settings of the main text.

\begin{mdframed}
    \begin{thm}[The Generalized Poincar\'{e}-Hopf Theorem]\label{thm:poincarehopf} Let $v$ be a vector field on a compact oriented manifold with boundary $S$. Then:
    \begin{equation}\nonumber
        \text{Ind}^H(v) := \begin{cases}
            \chi(S) & \text{if }D \text{ is even},\\
            0 & \text{if }D \text{ is odd,}\\
        \end{cases}
    \end{equation}
    where $\chi(S)$ is the Euler Characteristic of $S$ and $\text{Ind}^H$ is the homological index given by Definition \ref{def:homological_index}.
    \end{thm}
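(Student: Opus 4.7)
The plan is to reduce to the classical Poincar\'{e}--Hopf theorem on a closed manifold by doubling $S$ across its boundary, then carefully accounting for the ``half-contributions'' captured by $w(z)=\pm\tfrac{1}{2}$ in the definition of $\text{Ind}^H_\partial$. First, invoking the homotopy-invariance results of Jubin (Propositions~8--10) referenced after Definition~\ref{def:homological_index}, I may perturb $v$ without altering either $\text{Ind}^H_\circ(v)$ or $\text{Ind}^H_\partial(v)$, and so assume that both $v$ on $\text{Int}(S)$ and the tangential vector field $v_{\|}|_{\partial S}$ on $\partial S$ have only isolated (non-degenerate) zeros, and that the transversal component $v_{\perp}$ is nonzero at every zero of $v_{\|}|_{\partial S}$ so that $w(z)=\pm\tfrac{1}{2}$ is well-defined.

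Second, form the double $DS:=S\sqcup_{\partial S}S$, a closed oriented $D$-manifold satisfying $\chi(DS)=2\chi(S)-\chi(\partial S)$. To extend $v$ to a vector field on $DS$, modify $v$ inside a thin collar of $\partial S$ (guaranteed by the collar $\phi$) to produce $v'$ which is tangent to $\partial S$ on the boundary while agreeing with $v$ outside the collar. The modification is arranged so that (i) any new interior zeros created in the collar carry indices that track the signs $w(z)=\pm\tfrac{1}{2}$ of the normal component of $v$ at each boundary zero, and (ii) the zeros of $v'|_{\partial S}$ coincide with the zeros of $v_{\|}|_{\partial S}$ with identical tangential indices. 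Since $v'$ is tangent to $\partial S$, its mirror reflection glues smoothly to produce a continuous vector field $V$ on $DS$ whose zeros are: paired interior zeros of $v'$ (one in each copy of $S$) and single zeros on $\partial S$ coming from zeros of $v_{\|}|_{\partial S}$.

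Third, apply the classical Poincar\'{e}--Hopf theorem to $V$ on the closed manifold $DS$, giving $\text{Ind}(V)=\chi(DS)$ for even $D$ and $\text{Ind}(V)=0$ for odd $D$ (the latter because closed odd-dimensional manifolds have vanishing Euler characteristic). The paired interior contributions sum to $2\,\text{Ind}^H_\circ(v)$ by symmetry, and the boundary contributions can be related to the classical Poincar\'{e}--Hopf identity $\sum_z\text{Ind}^H(v_{\|}|_{\partial S},z)=\chi(\partial S)$ applied to the closed $(D-1)$-manifold $\partial S$. Combining these with the index adjustments recorded in step~(i), together with the parity identity $\chi(S)=\tfrac{1}{2}\chi(\partial S)$ for odd $D$ (itself a consequence of $\chi(DS)=0$), yields
\begin{equation}\nonumber
    \text{Ind}^H(v)=\text{Ind}^H_\circ(v)+\text{Ind}^H_\partial(v)=\begin{cases}\chi(S)&D\text{ even},\\ 0&D\text{ odd}.\end{cases}
\end{equation}

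The main obstacle is the bookkeeping at the boundary: one must show that the local index of the doubled vector field $V$ at a zero $z\in\partial S$, combined with the auxiliary interior zeros introduced by tangentification in the collar, aggregates precisely into the weighted sum $w(z)\cdot\text{Ind}^H(v_{\|}|_{\partial S},z)$ appearing in $\text{Ind}^H_\partial$. The discontinuity issue in the naive mirror extension (when $v$ is not already tangent to $\partial S$) is what forces the collar modification, and the key technical lemma is to verify that this modification preserves the total homological index as defined by Jubin---a claim which, although intuitively clear, requires a careful local-to-global argument near each boundary zero.
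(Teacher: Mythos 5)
The paper itself offers no argument here: its ``proof'' is a citation to Theorem~12 of Jubin (2009), so your doubling argument is necessarily an independent route. The skeleton you propose is a viable one, and most of the surrounding steps are fine: genericity reductions via Jubin's Propositions~8--10 are legitimate to invoke (the paper cites them for exactly this purpose), the paired interior zeros do contribute $2\,\text{Ind}^H_\circ(v)$ (the index is invariant under the orientation-reversing reflection since the conjugation acts on both the domain sphere and the target sphere), and the parity bookkeeping closes using $\chi(DS)=2\chi(S)-\chi(\partial S)$, $\chi(\partial S)=0$ for even $D$, and $\chi(S)=\tfrac{1}{2}\chi(\partial S)$ for odd $D$.

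The genuine gap is precisely the step you defer: you never construct the collar modification nor compute what the boundary zeros of the doubled field contribute, and that computation is where the entire content of the $\pm\tfrac{1}{2}$ weights lives. Concretely, the clean way to do this is to damp only the transversal component, $v'=v_{\|}+\rho\, v_{\perp}$ with $\rho=0$ on $\partial S$ and $\rho>0$ just inside; then in a thin collar no auxiliary interior zeros appear at all (your scheme of ``new interior zeros tracking $w(z)$'' is both unspecified and unnecessary), the zeros of the doubled field $V$ on $\partial S\subset DS$ are exactly $Z(v_{\|}|_{\partial S})$, and the lemma you must prove is the local identity
\begin{equation}\nonumber
\text{Ind}^H(V,z)\;=\;2\,w(z)\cdot \text{Ind}^H\big(v_{\|}|_{\partial S},z\big),
\end{equation}
i.e.\ $+\text{Ind}^H(v_{\|}|_{\partial S},z)$ when $v(z)$ points inward and $-\text{Ind}^H(v_{\|}|_{\partial S},z)$ when it points outward. (In the local model $V(x,t)=(u(x,|t|),\,t\,a)$ with $a=v_{\perp}(z)\neq 0$ this follows from a homotopy to the product field $(u(x,0),ta)$ and multiplicativity of the index, but that is an argument you must actually supply, including the check that the homotopy is nonvanishing on a small sphere.) With this lemma, classical Poincar\'{e}--Hopf on $DS$ gives $2\,\text{Ind}^H_\circ(v)+\sum_z 2w(z)\,\text{Ind}^H(v_{\|}|_{\partial S},z)=2\chi(S)-\chi(\partial S)$, and dividing by two yields the theorem --- note the half-integer weights only appear at this final division, which is why your step~(3), which instead tries to route the boundary terms through $\sum_z\text{Ind}^H(v_{\|}|_{\partial S},z)=\chi(\partial S)$, cannot work as stated: that identity ignores the signs $w(z)$ and is in fact not needed. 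As written, the proposal is a plan whose decisive lemma is acknowledged but not proved, so it does not yet establish the theorem.
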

    \vspace{0.2cm}
\end{mdframed}
\begin{proof}
    See Theorem 12 of \cite{jubin2009generalized}.
\end{proof}

\section{Thom's Transversality Theorem}\label{app:transversal}

In this appendix, we state the transversality theorem cited in the proof of Lemma \ref{lem:transversal}, alongside necessary definitions. To do so, we first define the concept of `transversal intersection of smooth manifolds'.

\begin{mdframed}
\begin{defn}[Transversal Intersection of Smooth Manifolds]\label{def:tang_travs}
    Suppose $M$ and $N$ are  $C^\infty$ sub-manifolds of a $C^\infty$ manifold $Y$. We say that $M$ and $N$ intersect transversally if, for all $x \in M \cap N$,
\begin{equation}\nonumber
T_x M+T_x N=T_x Y.
\end{equation}
If $M$ and $N$ intersect transversally, we write $M \pitchfork N$. If $M$ and $N$ do not intersect transversally, we say they intersect \textit{tangentially}. Alternatively, we may say $M$ and $N$ are \textit{tangential} at the point $x$ for which the above condition is violated.
\end{defn} 
\end{mdframed}
This definition can be extended to speak of the transversal intersection of a smooth function with a smooth manifold. To state the definition, we first denote by $D_a F$ the differential of a function $F:X\rightarrow Y$ at a point $x \in X$, which is a linear map $D_x F : T_x X \to T_{F(x)} Y$.

\begin{mdframed}
\begin{defn}[Transversal Intersection of a Smooth Function and Smooth Manifold]\label{def:transversal2}
    Let $X,Y$ and $Z$ be $C^\infty$ manifolds, with $Z\subseteq Y$ and $f: X \rightarrow Y$ be a smooth map. We say that $f$ is transverse to $Z$ if, for every $x \in f^{-1}(Z)$,
\begin{equation}\nonumber 
D_x f\left(T_x X\right)+T_{f(x)} Z=T_{f(x)} Y,
\end{equation}
where $+$ here represents summation in the sense of vector spaces. If $f$ is transversal to $Z$, we write $f \pitchfork Z$.
\end{defn} 
\end{mdframed}
The below statement of the transversality theorem is taken from \cite{guillemin2010differential}. A gentle introduction to the topic can be also be found in \cite{Greenblatt2015AnIT}.
\begin{mdframed}
    \begin{thm}[Thom's Transversality Theorem]\label{thm:thoms_transversal} Let $X, Y, Z$ and $R$ be $C^\infty$ manifolds with $Z\subseteq Y$. Suppose that $G: X \times R \rightarrow Y$ is a smooth map and define $g_r:X\rightarrow Y$ by $g_r(x):=G(x,r)$. If $G\pitchfork Z$, then for almost all $r \in R$, $g_r \pitchfork Z$.
    \end{thm}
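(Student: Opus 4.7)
The plan is to follow the classical strategy of reducing the statement to Sard's theorem. First, using the hypothesis $G \pitchfork Z$, I would show that the preimage $W := G^{-1}(Z)$ is a smooth submanifold of $X \times R$, with codimension equal to that of $Z$ in $Y$. This is the standard ``transverse preimage'' construction: at each $(x,r) \in W$, the differential $D_{(x,r)} G$ restricted to a complement of $(D_{(x,r)} G)^{-1}(T_{G(x,r)} Z)$ is a linear isomorphism onto a complement of $T_{G(x,r)} Z$, which lets one apply the implicit function theorem to realize $W$ locally as a graph. One identification to record for later use is
\begin{equation}\nonumber
T_{(x,r)} W = \bigl\{(\alpha,\beta) \in T_x X \times T_r R : D_{(x,r)} G(\alpha,\beta) \in T_{G(x,r)} Z\bigr\}.
\end{equation}

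Next, I would consider the smooth projection $\pi : W \to R$, $\pi(x, r) = r$, and invoke Sard's theorem: the set of critical values of $\pi$ has measure zero in $R$, so almost every $r \in R$ is a regular value of $\pi$. The heart of the proof is then verifying that any such regular $r$ makes $g_r$ transverse to $Z$. Fix such an $r$ and let $x \in g_r^{-1}(Z)$, so $(x,r) \in W$. Take any $v \in T_{g_r(x)} Y$; by $G \pitchfork Z$ at $(x,r)$, we can write $v = D_{(x,r)} G(a, b) + w$ with $(a,b) \in T_x X \times T_r R$ and $w \in T_{g_r(x)} Z$.

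To finish, I would use the regularity of $r$ for $\pi$ to obtain some $(a', b) \in T_{(x,r)} W$ whose $R$-component is the same $b$ produced above. By the identification from the first paragraph, $D_{(x,r)} G(a', b) \in T_{g_r(x)} Z$, so
\begin{equation}\nonumber
v \;=\; D_{(x,r)} G(a - a',\, 0) \;+\; \Bigl(D_{(x,r)} G(a', b) + w\Bigr) \;=\; D_x g_r(a - a') + w',
\end{equation}
where $w' \in T_{g_r(x)} Z$. This is exactly the decomposition required by Definition \ref{def:transversal2}, so $g_r \pitchfork Z$.

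The main obstacle, or rather the subtle step, will be the tangent-space bookkeeping in the last display: one has to correctly identify $T_{(x,r)} W$ as the preimage of $T_{G(x,r)} Z$ under $D_{(x,r)} G$, and recognize that vectors of the form $(\alpha, 0)$ satisfy $D_{(x,r)} G(\alpha, 0) = D_x g_r(\alpha)$ because fixing the second coordinate reduces $G$ to $g_r$. Once these identifications are in place, the argument is essentially the preimage theorem followed by Sard's theorem and linear algebra; no further analytic input is required.
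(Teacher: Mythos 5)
Your argument is correct: it is the standard parametric-transversality proof (transverse-preimage theorem to make $W=G^{-1}(Z)$ a submanifold with $T_{(x,r)}W=(D_{(x,r)}G)^{-1}(T_{G(x,r)}Z)$, Sard's theorem applied to the projection $\pi:W\rightarrow R$, and the linear-algebra step replacing the $R$-component), and the tangent-space bookkeeping, including $D_{(x,r)}G(\alpha,0)=D_xg_r(\alpha)$, is handled correctly. The paper itself gives no proof of this theorem but simply cites \cite{guillemin2010differential}, and your proof is precisely the argument found in that reference, so this is essentially the same approach.
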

    \vspace{0.2cm}
\end{mdframed}
In the proof of Lemma \ref{lem:transversal}, we are working on the $C^\infty$ manifold $Y:=\text{Int}(B_\epsilon\setminus B_{\epsilon/2})$ and wish to show that the $C^\infty$ manifolds $Z:=\tilde{f}^{-1}(c)\cap Y$ and $\partial B_{\delta}$ intersect transversally for some $\delta$ arbitrarily close to $3\epsilon/4$. To show this formally, noting that $Y\approx \mathbb{S}^{n-1}\times (\epsilon/2,\epsilon)$, we first apply a change of coordinates to represent each $y\in Y$ as $y=(x_1,...,x_{n-1},r)$ where $(x_1,...,x_{n-1})\in X:=\mathbb{S}^{n-1}$ and $r\in R:=(\epsilon/2,3\epsilon/2)$ is the distance from $y$ to the origin. 

Define $G:X\times R \rightarrow Y$ by $G((x_1,...,x_{n-1}),r):=(x_1,...,x_{n-1},r)$. As $G$ is in effect the identity map, we can see trivially that, for all $w \in X\times R$, and thus for all $w \in G^{-1}(Z)$, we have $D_w G\left(T_w (X \times R)\right)=\mathbb{R}^n=T_{G(w)} Y$. By definition \ref{def:transversal2}, we now have $G\pitchfork Z$. Let $g_r:X\rightarrow Y$ be defined as $g_r(x):=G(x,r)$. Applying Thom's Transversality Theorem, Theorem \ref{thm:thoms_transversal}, we can choose $\delta\in R$ be such that $g_\delta \pitchfork Z$ and $\delta$ is arbitrarily close to $3\epsilon/4$. Unwinding the definitions, we see that:
\begin{equation}\nonumber
\begin{aligned}
g_\delta \pitchfork Z 
&\iff D_x g_\delta(T_x \mathbb{S}^{n-1}) + T_{g_\delta(x)} Z = T_{g_\delta(x)} Y \quad \text{for all } x \in g_\delta^{-1}(Z)
\quad && \\
&\iff T_y(\partial B_\delta) + T_y(\tilde{f}^{-1}(c) \cap Y) = T_y Y 
\quad  \text{for all } y \in \partial B_\delta \cap \tilde{f}^{-1}(c)  && \\
&\iff \partial B_\delta \pitchfork \left( \tilde{f}^{-1}(c) \cap Y \right),
\quad && 
\end{aligned}
\end{equation}
and thus the claim holds.

\section{The Mountain Pass Theorem}\label{app:mpt}

The following variant of the mountain pass theorem is adapted from Theorem 17.6 of \cite{Jabri_2003}, which is in turn attributed to \cite{convex_mpt}. It is employed in the proof of Theorem \ref{thm:not_quite_morse} in the main text. An illustration of the theory, alongside a description to aid intuition is provided by Fig. \ref{fig:MPT}. 

\begin{mdframed}
    \begin{thm}[Mountain Pass Theorem on Convex Domains]\label{thm:mpt} Let $S\subseteq\mathbb{R}^D$ be compact and convex and $f:S\rightarrow \mathbb{R}$ be $C^1$. Suppose further that $p_1, p_2 \in S$ satisfy $f(p_1) \leq f(p_2)$ and, for every path $\gamma:[0,1]\rightarrow S$ from $p_1$ to $p_2$, $\min_{t \in [0,1]} f(\gamma(t)) < f(p_1)$. Then, there exists a $p_3 \in S$ such that
\begin{equation}\nonumber
    f(p_3):= c < f(p_1)\quad  \text{ and }\quad g(p_3) := \sup_{\substack{q \in S\\ |p_3-q|<1}} \nabla f(p_3)\cdot (q-p_3) = 0 .
\end{equation}
In particular, if $\partial S$ is a $C^1$ manifold then either $p_3$ is a critical point of $f$, or $f^{-1}(c)$ is tangential to $\partial S$ at $p_3$.
    \end{thm}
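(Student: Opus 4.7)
The plan is a minimax / deformation argument, essentially the classical mountain pass theorem applied to $-f$, adapted for a compact convex domain with boundary. Define
\begin{equation*}
c := \sup_{\gamma \in \Gamma} \min_{t \in [0,1]} f(\gamma(t)),
\end{equation*}
where $\Gamma$ is the set of continuous paths $\gamma:[0,1]\to S$ joining $p_1$ to $p_2$. The hypothesis gives $c \leq f(p_1)$. To upgrade this to the strict inequality $c < f(p_1)$ I would argue by contradiction: a sequence $\gamma_n \in \Gamma$ with $\min f\circ \gamma_n \nearrow f(p_1)$ yields, by Blaschke-selection compactness of the image continua, a compact connected $F \subseteq S$ containing $p_1, p_2$ with $f \geq f(p_1)$ throughout; convexity of $S$ then allows one to thicken $F$ into an actual path lying in $\{f \geq f(p_1)\}$ from $p_1$ to $p_2$, contradicting the hypothesis.

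The bulk of the argument is a deformation lemma adapted to the convex domain $S$. Suppose, for contradiction, that no point in the level set $\{f = c\}$ satisfies $g = 0$. The map $g$ is upper semi-continuous (as a sup over the compact set $\{q \in S : |q-p|\leq 1\}$ of the jointly continuous function $\nabla f(p)\cdot(q-p)$), so compactness of $\{f = c\}$ and continuity of $f$ supply constants $\epsilon_0, \alpha > 0$ with $g(p) \geq \alpha$ throughout the band $\{|f - c| \leq \epsilon_0\}$. At each such $p$ there is a witness $q(p) \in S$ with $|q(p)-p|\leq 1$ and $\nabla f(p)\cdot(q(p)-p) \geq \alpha/2$; a partition of unity patches these witnesses into a continuous vector field $V: S \to \mathbb{R}^D$ with the feasibility property $p + V(p) \in S$ (this is automatic because $p + V(p) = \sum_i \phi_i(p)\, q_i$ is a convex combination of $q_i \in S$) and with $\nabla f(p)\cdot V(p) \geq \alpha/3$ on the band. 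The flow $\dot\eta_s = \psi(p) V(\eta_s)$, with $\psi$ a cutoff supported on the band, then stays inside $S$ (by sub-tangentiality, using convexity) and, integrated up to time $s_0 = 4\epsilon_0/\alpha$, yields a deformation $\eta: S \to S$ that is the identity outside the band and raises $f$ by at least $\epsilon_0$ on $\{|f-c|\leq \epsilon_0/2\}$.

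Applying $\eta$ to any $\gamma \in \Gamma$ with $\min f\circ \gamma \geq c - \epsilon_0/4$ -- and noting that $\epsilon_0$ can be chosen smaller than $f(p_1) - c$ so the endpoints lie above the band and are fixed by $\eta$ -- produces a new path $\tilde\gamma := \eta\circ\gamma \in \Gamma$ with $\min f\circ\tilde\gamma \geq c + \epsilon_0/4$, contradicting the definition of $c$. Hence some $p_3 \in \{f=c\}$ must have $g(p_3) = 0$. The ``in particular'' statement is then pure geometry: if $p_3 \in \mathrm{Int}(S)$, every $v \in \mathbb{R}^D$ is admissible via $q = p_3 + \delta v$ for small $\delta$, so $g(p_3) = 0$ forces $\nabla f(p_3)\cdot v \leq 0$ for all $v$ and thus $\nabla f(p_3) = 0$; if $p_3 \in \partial S$ and $\partial S$ is $C^1$, the admissible directions span the closed tangent half-space to $S$ at $p_3$, and $g(p_3) = 0$ forces $\nabla f(p_3)$ to vanish on $T_{p_3}\partial S$, which is precisely the statement that $f^{-1}(c)$ is tangent to $\partial S$ at $p_3$.

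The main obstacle is the construction of the feasible pseudo-gradient $V$. On an unbounded domain or a closed manifold the usual (normalised) gradient flow of $f$ suffices, but here the gradient flow can exit $S$ through $\partial S$, so one must replace $\nabla f$ by directions pointing into $S$. Convexity of $S$ is what rescues the argument: chords from $p$ to points $q \in S$ remain in $S$, and convex combinations of chord directions inherit the same feasibility, letting one patch local witnesses into a continuous $V$ whose flow respects $\partial S$ while preserving a uniform positive inner product with $\nabla f$. The preliminary step upgrading $c \leq f(p_1)$ to strict inequality is a secondary technical point, handled via the continuum-thickening argument sketched above.
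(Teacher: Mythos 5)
Your overall strategy differs from the paper's, which does not reprove the minimax machinery at all: it invokes Theorem 17.6 of Jabri (2003) directly (with the dictionary $M:=S$, $\Phi:=-f$, etc., and compactness supplying the Palais--Smale condition) and only argues the geometric ``in particular'' clause by hand, exactly as you do at the end. Your deformation skeleton is mostly sound: the convex-combination pseudo-gradient $V(p)=\sum_i\phi_i(p)(q_i-p)$ with $p+V(p)\in S$, the invariance of $S$ under the resulting flow, and the standard band-and-cutoff bookkeeping are all workable (minor slips: what you need, and what actually holds because the constraint ball $|q-p|<1$ is open, is \emph{lower} semicontinuity of $g$, not upper; the flow should read $\dot\eta_s=\psi(\eta_s)V(\eta_s)$; and the deformation cannot literally ``raise $f$ by $\epsilon_0$'' on the band since the field vanishes above $c+\epsilon_0$ --- the usual statement, that $\{f\ge c-\epsilon\}$ is pushed into $\{f\ge c+\epsilon\}$ for a suitable smaller $\epsilon$, is what you should use).

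The genuine gap is the preliminary step upgrading $c:=\sup_\gamma\min_t f(\gamma(t))\le f(p_1)$ to $c<f(p_1)$. Your Hausdorff-limit continuum $F$ is indeed compact, connected, contains $p_1,p_2$, and satisfies $f\ge f(p_1)$ on $F$, but it need not be \emph{path}-connected, and thickening it by $\epsilon$ only produces paths along which $f\ge f(p_1)-\omega(\epsilon)$ (with $\omega$ the modulus of continuity of $f$), which does not contradict the per-path hypothesis. In fact the implication you are trying to prove is false: let $K\subset\mathrm{Int}(S)$ be a closed topologist's sine curve, take (via Whitney) a smooth $f\le 0$ with $f^{-1}(0)=K$, and set $p_1=(0,0)$, $p_2=(1,\sin 1)\in K$. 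Every path from $p_1$ to $p_2$ must leave $K$ (the two points are not joined by a path inside $K$), so $\min_t f(\gamma(t))<0=f(p_1)$ for every $\gamma$, yet paths hugging $K$ show $c=0=f(p_1)$. Since your construction needs $\epsilon_0<f(p_1)-c$ both to keep the endpoints above the band (so $\eta\circ\gamma$ is still admissible) and to deliver the conclusion $f(p_3)=c<f(p_1)$, the argument breaks down precisely in this degenerate case; it cannot be repaired by refining the thickening argument, because the strict inequality simply does not follow from the stated per-path hypothesis. You would either have to take the minimax-level inequality $c<f(p_1)$ as the working hypothesis (which is essentially the form the result cited by the paper is proved in) or supply a genuinely new argument producing a constrained critical point strictly below level $f(p_1)$ in the limiting case $c=f(p_1)$ --- your current proof does neither.
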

\end{mdframed}

{\centering
    \includegraphics[width=\textwidth]{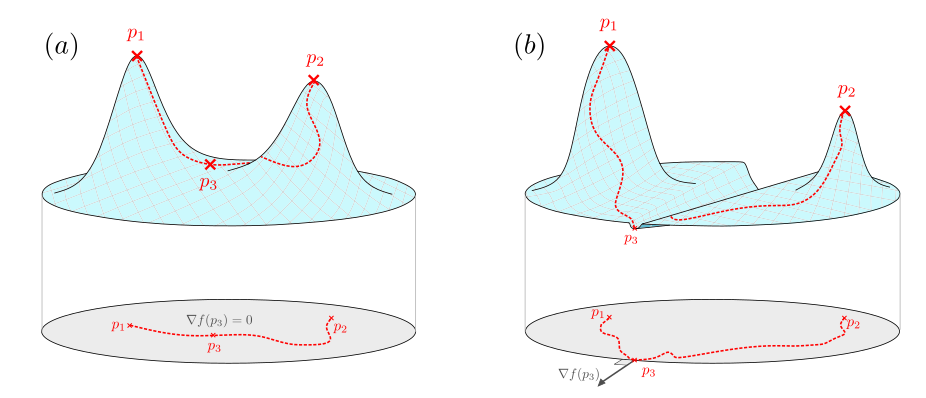}
    \captionof{figure}{Illustration of the mountain pass theorem for convex domains. The theorem states that, given two local maxima, $p_1$ and $p_1$, on a convex domain with smooth boundary, it must the case that either there exists a third critical point (Panel (a)), or there exists a point on the boundary of the domain at which the gradient is normal to the boundary (Panel (b)). In either case, the third point, $p_3$, is typically found by identifying a path from $p_1$ to $p_2$ whose lowest point of elevation is as high as possible.}
    \label{fig:MPT}}
    \begin{proof} See Section 17.2 of \cite{Jabri_2003}. In the notation of \cite{Jabri_2003}, we have that $M:=S, K:=[0,1], K^{*}:=\{0,1\}, \Phi:=-f, \Gamma$ is the set of paths from $p_1$ to $p_2$ and $\gamma^*$ is an arbitrary fixed choice of path from $p_1$ to $p_2$. The fact that the Palais-Smale condition on $M$ is satisfied, and that the limit of the sequence $\{u_n\}$ in \cite{Jabri_2003} exists, both follow from the compactness of $S$. \\
    \\
    The final statement in the theorem follows as if $p_3\in \text{Int}(S)$, then we can choose $\lambda>0$ sufficiently small that $q':=p_3 + \lambda\nabla f(p_3)\in \text{Int}(S)$. Noting the definition of $g$, we see that:
    \begin{equation}\nonumber
        g(p_3) \geq\nabla f(p_3)\cdot(p_3 + \lambda\nabla f(p_3)-p_3)=\lambda \nabla f(p_3) \cdot \nabla f(p_3) = \lambda|\nabla f(p_3)|^2.
    \end{equation}
    Thus, $g(p_3)=0$ implies $\nabla f(p_3)=0$. If instead $p_3\in \partial S$, for any unit tangent vector $w^*$ of $\partial S$ at $p_3$, we can choose a sequence of points $q_n\in S$ such that $q_n-p_3/|q_n-p_3|$ tends to $w^*$. As $g(p_3)=0$, we have that, for all $n$, $0 \geq \nabla f(p_3)\cdot (q_n-p_3)/|q_n-p_3|\rightarrow \nabla f(p_3)\cdot w^*$. As this holds for all such tangent vectors, including $-w^*$, we see that $\nabla f(p_3)\cdot w^*\leq 0$ and $-\nabla f(p_3)\cdot w^*\leq 0$, and thus $\nabla f(p_3)\cdot w^*= 0$. It therefore follows that $\nabla f$ is normal to $\partial S$ at $p_3$. The result now follows.
    \end{proof}

\section{The Morse Lemma}\label{app:morse}

In this appendix, we recap the proof of the Morse Lemma given by \cite{ioffe1997} for $C^{1,1}$ functions ($C^1$-functions with locally Lipschitz gradients). As our interest lies only in $C^{2}$ functions, for concision the below proofs have been restricted to the $C^2$ setting. 

\begin{mdframed}
\begin{lemma}[The Morse Lemma]\label{lem:morse} Suppose that $S$ is a $C^{2}$ manifold and let $f:S\rightarrow \mathbb{R}$ be Morse. Assume $f$ has a Morse point at the origin and, as in the main text, denote $H:=H_f(0)$. Then, there exists $r >0$ depending only upon $||H||$ and $||H^{-1}||$ and a zero-preserving $C^1$ bi-Lipschitz homeomorphism $\Gamma(x)$ defined on the ball $B_r$ such that:
\begin{equation}\nonumber
   f(\Gamma(x)) = \frac{1}{2}x'Hx.
\end{equation}
\end{lemma}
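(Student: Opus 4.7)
The plan is to follow the homotopy strategy already sketched in the passage preceding Theorem \ref{thm:constants_lemma}; indeed, the proof of that theorem contains the Morse Lemma as the special case in which the sequence is constant ($f_n \equiv f$). I would define $\phi(x) := f(x) - \tfrac{1}{2} x'Hx$, the linear homotopy $S_t(x) := \tfrac{1}{2} x'Hx + t\phi(x)$, and the vector field
\begin{equation*}
v_t(x) := \begin{cases} -\phi(x)\, y_t(x)/|y_t(x)|^2, & x \neq 0, \\ 0, & x = 0, \end{cases}
\end{equation*}
where $y_t(x) := Hx + t\nabla\phi(x) = \nabla S_t(x)$. Let $\Gamma_t$ be the nonautonomous flow $\dot{\Gamma}_t(x) = v_t(\Gamma_t(x))$ with $\Gamma_0 = \mathrm{Id}$, and set $\Gamma := \Gamma_1$. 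A direct computation shows $\tfrac{d}{dt}(S_t\circ\Gamma_t) = \phi(\Gamma_t) + \nabla S_t(\Gamma_t)\cdot v_t(\Gamma_t) = 0$ by the very definition of $v_t$. Hence $S_1\circ\Gamma_1 \equiv S_0\circ\Gamma_0 = \tfrac{1}{2} x'Hx$, which is exactly the required identity $f(\Gamma(x)) = \tfrac{1}{2} x'Hx$ once existence and regularity of $\Gamma$ are established.

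The first concrete step is to choose the radius $r$. Since $H_\phi(0) = H_f(0) - H = 0$, Lemma \ref{lem:lipschitz} provides an $r > 0$ on which $\nabla\phi$ is Lipschitz with any prescribed constant $L$; I would fix $L := \tfrac{1}{2} ||H^{-1}||^{-1}$, so that $r$ depends only on $||H||$ and $||H^{-1}||$. Integrating $\nabla\phi$ from the origin and using $\phi(0) = 0$, $\nabla\phi(0) = 0$ gives $|\phi(x)| \leq \tfrac{L}{2}|x|^2$ on $B_r$. Combining $|Hx| \geq |x|/||H^{-1}||$ (which follows from $x = H^{-1}(Hx)$) with $|\nabla\phi(x)| \leq L|x|$ and the reverse triangle inequality then yields the key lower bound $|y_t(x)| \geq c|x|$ on $B_r$, with $c := ||H^{-1}||^{-1} - L > 0$. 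Together these two estimates deliver the linear growth control $|v_t(x)| \leq (L/2c)|x|$, which in particular shows $v_t$ is continuous at the origin.

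The central technical step is a uniform-in-$t$ Lipschitz estimate for $v_t$ on $B_r$. Away from $0$, $v_t$ is a smooth quotient with nonvanishing denominator, so differentiation is routine; the point is that the factor of $\phi$ in the numerator contributes two powers of $|x|$ that precisely offset the two powers arising from $|y_t|^{-2}$. Tracking constants through the quotient rule using the bounds of the previous paragraph produces a Lipschitz constant depending only on $||H||$, $||H^{-1}||$ and $L$. With this in hand, Picard--Lindel\"{o}f gives short-time existence of $\Gamma_t$; the linear growth bound on $v_t$ combined with Gr\"{o}nwall's inequality forces trajectories starting in a slightly smaller ball to remain in $B_r$ for all $t \in [0,1]$ (shrink $r$ if necessary); and a second Gr\"{o}nwall estimate applied to $|\Gamma_t(x) - \Gamma_t(y)|$ yields bi-Lipschitz constants expressible purely in $||H||$ and $||H^{-1}||$. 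The backward flow generated by $-v_{1-s}$ supplies $\Gamma^{-1}$, $C^1$ regularity of $\Gamma_1$ follows from smooth dependence of ODEs on initial data, and $v_t(0) = 0$ ensures $\Gamma(0) = 0$.

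The main obstacle is precisely the Lipschitz bound on $v_t$ near the origin, since $v_t$ is defined by a quotient whose denominator vanishes at $0$. This is what forces the specific quantitative choice $L < ||H^{-1}||^{-1}$ (rather than $L$ arbitrarily small) and the careful bookkeeping needed to verify that every radius and Lipschitz constant in the argument is expressible in $||H||$ and $||H^{-1}||$ alone, as demanded by the statement and as required by the uniformity arguments invoked in Theorem \ref{thm:constants_lemma}.
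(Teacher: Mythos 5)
Your strategy is the paper's own (the Ioffe homotopy: $\phi$, $y_t$, $v_t$, the flow $\Gamma_t$, and $\tfrac{d}{dt}(S_t\circ\Gamma_t)=0$), with two defensible deviations. First, you obtain the Lipschitz bound on $v_t$ by differentiating the quotient away from the origin; the paper instead bounds the difference $\phi(x)|y_t(x')|^2y_t(x)-\phi(x')|y_t(x)|^2y_t(x')$ algebraically, a choice inherited from Ioffe's $C^{1,1}$ setting where $\nabla y_t$ need not exist. Since here $f$ is $C^2$, your quotient-rule route is legitimate (note only that the punctured ball is not convex, so the gradient bound must be converted to a Lipschitz bound on all of $B_r$ using continuity of $v_t$ at $0$, which is easy). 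Second, producing $\Gamma^{-1}$ from the time-reversed flow is cleaner than the paper's lower bound $(2-e^{a_1})|x_1-x_2|$, and it removes the paper's need for $a_1<\ln 2$; this matters because your fixed choice $L=\tfrac12\|H^{-1}\|^{-1}$ gives $a_1\geq 2$, so the paper's bi-Lipschitz argument would actually fail for that $L$ — your backward-flow argument rescues it.

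The genuine gap is the $C^1$ regularity of $\Gamma$. You invoke smooth dependence of the flow on initial data, but that theorem requires $v_t$ to be $C^1$ in $x$ on the whole ball, and at $x=0$ the field is a $0/0$ quotient: your argument only gives continuity of $v_t$ at $0$ and differentiability away from $0$. Moreover, the bounds you carry ($|\phi(x)|\leq\tfrac{L}{2}|x|^2$, $|\nabla\phi(x)|\leq L|x|$, $|y_t(x)|\geq c|x|$) yield only boundedness of $\nabla v_t$ near the origin (roughly $L/c+3LC/(2c^2)$), which does not show $\nabla v_t(x)\rightarrow\nabla v_t(0)$. To close this you need the sharper consequences of $H_\phi(0)=0$, namely $\nabla\phi(x)=o(|x|)$ and $\phi(x)=o(|x|^2)$, which give $\nabla v_t(0)=0$ and $\|\nabla v_t(x)\|\rightarrow 0$ as $x\rightarrow 0$, hence $v_t\in C^1(B_r)$; this is exactly what the final paragraph of the paper's proof establishes (phrased there as ``$L$ can be made as small as desired by reducing $R_0$''). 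Without this step the asserted $C^1$ (as opposed to merely bi-Lipschitz) property of $\Gamma$ is unproved. A small additional caveat: with $L$ fixed in terms of $\|H^{-1}\|$, the radius supplied by Lemma \ref{lem:lipschitz} depends on how fast $H_f$ varies near the origin, so ``$r$ depends only on $\|H\|$ and $\|H^{-1}\|$'' is as loose in your write-up as it is in the lemma statement itself; Corollary \ref{corr:morse_constants} is the honest quantitative form.
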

\vspace*{0.1cm}
\end{mdframed}
In the main text, the above is used to show Theorem \ref{thm:constants_lemma}. As our interest lies in the specific neighborhood upon which the homeomorphism, $\Gamma$, is defined the following write-up is given with the specific aim of carefully tracking the constants used to define $r$. Beyond aesthetic differences, however, the proof closely follows that of \cite{ioffe1997} and is included purely for reference.\\
\\
Before we prove the lemma, it will be useful to define $\phi:S \rightarrow \mathbb{R}$ as follows:
\begin{equation}\nonumber
    \phi(x)=f(x)-\frac{1}{2}x'Hx.
\end{equation}
Further, we shall let $R_0>0$ be such that $\nabla \phi$ satisfies a Lipschitz condition on $B_{R_0}$ for some constant $L\leq ||H^{-1}||^{-1}$. The existence of such an $R_0>0$ follows from Lemma \ref{lem:lipschitz} and the fact that $\phi(0)=0$ and $\nabla \phi(0)=0$, by the definition of $f$. Note that by Lemma \ref{lem:lipschitz}, we can choose $L$ as small as we like, by adjusting the value of $R_0$.\\
\\
Now to prove Lemma \ref{lem:morse}, we must first show the following.
\begin{mdframed}
\begin{lemma}\label{lem:param} Define $H$, $R_0$ and $L$ as above. For $t \in [0,1]$, define $y_t$ as follows:
    \begin{equation}\nonumber
        y_t(x) = Hx + t\nabla \phi(x),
    \end{equation}
    and let $c=(||H^{-1}||^{-1}-L)$, $C=(||H||+L)$ and $R=cR_0/C$. It follows that $y_t|_{B_R}$ is a homeomorphism and for any $x, x' \in B_R$, we have that:
    \begin{equation}\nonumber
        c |x - x' | \leq | y_t(x) - y_t(x') | \leq C |x-x'|
    \end{equation} 
\end{lemma}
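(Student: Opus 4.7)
The plan is to establish the bi-Lipschitz bounds first and then deduce the homeomorphism statement as an immediate corollary. The very first thing I would check is that the Lipschitz hypothesis on $\nabla \phi$ actually applies throughout $B_R$, i.e.\ that $B_R \subseteq B_{R_0}$. Since $||H|| \geq ||H^{-1}||^{-1}$ (the largest singular value dominates the smallest) and $L \leq ||H^{-1}||^{-1}$ by construction, we have $c \leq C$, and therefore $R = cR_0/C \leq R_0$. This lets us freely invoke $|\nabla \phi(x) - \nabla \phi(x')| \leq L|x-x'|$ for all $x,x'\in B_R$.

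Next, I would write $y_t(x) - y_t(x') = H(x-x') + t\bigl(\nabla \phi(x) - \nabla \phi(x')\bigr)$ and read off the two-sided bound by estimating each piece. The upper bound follows directly from the triangle inequality and the Lipschitz control on $\nabla\phi$:
\begin{equation}\nonumber
    |y_t(x) - y_t(x')| \leq ||H||\cdot |x-x'| + tL|x-x'| \leq (||H||+L)|x-x'| = C|x-x'|,
\end{equation}
uniformly in $t \in [0,1]$. For the lower bound, the key observation is that $|Hv| \geq ||H^{-1}||^{-1}\,|v|$ for every $v$, which follows by applying $H^{-1}$ to $Hv$ and using the definition of the operator norm. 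The reverse triangle inequality then gives
\begin{equation}\nonumber
    |y_t(x) - y_t(x')| \geq |H(x-x')| - t|\nabla \phi(x) - \nabla \phi(x')| \geq (||H^{-1}||^{-1} - tL)|x-x'| \geq c|x-x'|,
\end{equation}
again uniformly in $t$.

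With the two-sided bound in hand, the homeomorphism claim is essentially automatic: $y_t$ is continuous because $\nabla \phi$ is continuous and $H$ is linear, the lower bound gives injectivity, and moreover shows that $y_t^{-1}:y_t(B_R) \to B_R$ is itself Lipschitz with constant $c^{-1}$, hence continuous. Thus $y_t|_{B_R}$ is a homeomorphism onto its image. I do not anticipate any real obstacle here; the argument is a routine manipulation of triangle inequalities. The only subtlety worth flagging is that to obtain a genuinely non-trivial lower bound we need $L<||H^{-1}||^{-1}$ strictly, since otherwise $c = 0$ and the estimate becomes vacuous. This is harmless: by Lemma \ref{lem:lipschitz} we may shrink $R_0$ in order to take $L$ strictly smaller than $||H^{-1}||^{-1}$, which I would do from the outset.
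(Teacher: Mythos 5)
Your estimates are correct and your two-sided bound is obtained exactly as in the paper (triangle inequality for the upper bound; $|H v|\geq ||H^{-1}||^{-1}|v|$ plus the reverse triangle inequality for the lower bound, using $tL\le L$), and your preliminary observations---that $c\le C$ so $B_R\subseteq B_{R_0}$, and that one should take $L$ strictly below $||H^{-1}||^{-1}$ by shrinking $R_0$ via Lemma \ref{lem:lipschitz}---are both legitimate and indeed implicitly needed. Where you genuinely diverge from the paper is in the homeomorphism step: you read the claim as ``homeomorphism onto its image'' and deduce it directly from the bi-Lipschitz bound (injectivity plus a $c^{-1}$-Lipschitz inverse on the image), whereas the paper first rewrites $y_t(x)=v$ as a fixed-point equation $x=H^{-1}v-tH^{-1}\nabla\phi(x)=:z_v(x)$, checks that $z_v$ maps $B_{R_0}$ into itself and is a contraction (this is where $L<||H^{-1}||^{-1}$ is used), and invokes the Banach fixed point theorem; it then shows $y_t(B_R)\subseteq B_{cR_0}$ and concludes. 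The fixed-point route buys slightly more than your argument: it shows every $v\in B_{cR_0}$ has a unique preimage in the larger ball $B_{R_0}$, i.e.\ a quantified inverse-function-type statement (the image of $B_{R_0}$ covers a definite ball, and injectivity holds across $B_{R_0}$, not merely within $B_R$). Your argument is more elementary and entirely sufficient for the lemma as stated and for its later uses in Lemma \ref{lem:morse} and Theorem \ref{thm:constants_lemma}, which only ever invoke the two-sided Lipschitz estimates (in particular $|x|\le c^{-1}|y_t(x)|$), so I see no gap---only a trade-off between brevity and the extra surjectivity information the contraction argument provides.
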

\vspace*{0.05cm}
\end{mdframed}
\begin{proof}
    First consider the equation $v=y_t(x)$. By rearranging we have that:
    \begin{equation}\nonumber
        x = H^{-1}v - tH^{-1}\nabla\phi(x) := z_v(x)
    \end{equation}
    Denote the expression on the right as $z_v(x)$. Fix $v \in B_{cR_0}$. We shall now show that $z_v$ maps $B_{R_0}$ into itself. Let $x \in B_{R_0}$. It now follows that:
    \begin{equation}\nonumber
        \begin{split}
            |z_v(x)| & \leq  ||H^{-1}||\cdot(|v| + |\nabla\phi(x)|) \\
            & \leq  ||H^{-1}||\cdot(cR_0 + L|x|) \\
            & \leq  ||H^{-1}||R_0\cdot(c + L) \\
            & = R_0 \\
        \end{split}
    \end{equation}
    where the first inequality follows from the definition of $z_v$ and the fact $t\in[0,1]$, the second follows from the definition of $v$ and the Lipschitz condition on $\nabla \phi$, the third from the fact $x \in B_{R_0}$. The final inequality is due to the definition of $c$.\\
    \\
    Now let $x,x'\in B_{R_0}$. We have that:
    \begin{equation}\nonumber
        \begin{split}
            |z_v(x)-z_v(x')| & = |H^{-1}v-tH^{-1}\nabla \phi(x)-H^{-1}v+tH^{-1}\nabla \phi(x')| \\
            & \leq t||H^{-1}|| \cdot |\nabla \phi(x)-\nabla\phi(x')| \\
            & \leq tL ||H^{-1}|| \cdot |x-x'|
        \end{split}
    \end{equation}
    where the last inequality follows from the Lipschitz condition on $\nabla\phi$. As $t\in[0,1]$ and $L<||H^{-1}||^{-1}$, $z_v:B_{R_0}\rightarrow B_{R_0}$ is a contraction mapping. Applying Banach fixed point theorem, $z_v$ must have a unique fixed point in $B_{R_0}$ (i.e. $z_v(x)=x$). Returning to the definition of $z_v$, it follows there is a unique $x\in B_{R_0}$ such that $y_t(x)=v$, whenever $v \in B_{cR_0}$.\\
    \\
    We now show that if $x\in B_{R}$ then $y_t(x)\in B_{cR_0}$. This follows as:
    \begin{equation}\nonumber
    \begin{split}
        |y_t(x)| & \leq ||H|| \cdot |x| + |\nabla \phi(x)-\nabla \phi(0)| \\
         & \leq (||H||+L)R \\
         & \leq \frac{CcR_0}{C}= cR_0\\
    \end{split}
    \end{equation}
    where the first inequality follows from the fact $\nabla \phi(0)=0$ and $t\in[0,1]$, the second inequality follows from the Lipschitz condition on $\nabla \phi$ and the fact $x \in B_R$ and the remaining inequalities follow from the definitions of $C$ and $R$. Combining the above, it now follows that $y_t$ restricts to a homeomorphism on $B_R$, as desired.\\
    \\
    We now show that $y_t$ is lower Lipschitz with constant $c$. This follows as, for any $x,x' \in B_R$;
    \begin{equation}
    \begin{split}
        |y_t(x)-y_t(x')| & \geq |H(x-x')| - t|\nabla \phi(x)-\nabla \phi(x')| \\
        & \geq  ||H^{-1}||^{-1}|x-x'| - |\nabla \phi(x)-\nabla \phi(x')| \\
        & \geq  (||H^{-1}||^{-1}-L)|x-x'| \\
        & = c|x-x'|
    \end{split}
    \end{equation}
    where the first inequality follows by the definition of $y_t$, the second by the fact that for any $x$, we have $|x| = |H^{-1}Hx| \leq ||H^{-1}||\cdot |Hx|$, and the third inequality follows from the Lipschitz condition on $\nabla \phi$.\\
    \\
    To show the upper Lipschitz condition, note that:
    \begin{equation}\nonumber
    \begin{split}
        |y_t(x)-y_t(x')| & \leq ||H|| \cdot |x-x'| + |\nabla \phi (x)-\nabla \phi (x')|\\
        & \leq (||H|| + L) \cdot |x-x'| \\
        & = C |x-x'| \\
    \end{split}
    \end{equation}
    where the inequality follows from the Lipschitz condition on $\nabla \phi$. This completes the proof.
\end{proof}
We are now in a position to prove Lemma \ref{lem:morse}.

\begin{proof}[Proof of Lemma \ref{lem:morse}]
    We begin by noting that, by the Lipschitz condition on $\nabla \phi$, for any $x,x'\in B_{R_0}$:
\begin{equation}\nonumber
\begin{split}
    \int_0^1 |\nabla \phi(tx)- \nabla\phi(tx')| dt & \leq \int_0^1 tL|x-x'| dt = L |x-x'|
\end{split}
\end{equation}
We now have that:
\begin{equation}\nonumber
    \begin{split}
        |\phi(x)-\phi(x')| & = \bigg|\int_0^1 \nabla \phi(tx) \cdot x dt- \int_0^1 \nabla \phi(tx') \cdot x' dt\bigg| \\
        & \leq \int_0^1 |\nabla \phi(tx)| \cdot |x-x'| -\big|\nabla \phi(tx')-\nabla \phi(tx)\big| \cdot |x'| dt \\
        & \leq L (|x|+|x'|)\cdot |x-x'|
    \end{split}
\end{equation}
where the first equality follows by considering the integral of $\frac{d}{dt}\phi(P(t))$ along the path $P(t)=tx, t \in [0,1]$ and the final inequality follows from the Lipschitz condition on $\nabla \phi$.\\
\\
Define $y_t$ as in Lemma \ref{lem:param}. It follows from Lemma \ref{lem:param} that for any $x \in B_R$, $|x|\leq c^{-1}|y_t(x)|$. Thus, for $x,x'\in B_R$:
\begin{equation}\label{eq:phi_bdd}
\begin{split}
    |\phi(x)-\phi(x')| & \leq \frac{L}{c}(|y_t(x)|+|y_t(x')|) \cdot|x-x'| \\
   & \leq \frac{L}{c^2}(|y_t(x)|+|y_t(x')|)\cdot|y_t(x)-y_t(x')| \\
\end{split}
\end{equation}
Substituting $x'=0$, it can also be seen that for any $x \in B_R$:
\begin{equation}\label{eq:v_cts}
    |\phi(x)|\leq L|x|^2 \leq \frac{L}{c^2}|y_t(x)|^2
\end{equation}
We now define the function $v_t$ on $B_R$ as follows:
\begin{equation}\nonumber
    v_t(x) = \begin{cases}
        -\phi(x) \frac{y_t(x)}{|y_t(x)|^2}, & x \neq 0 \\
        0 & x =0\\
    \end{cases}
\end{equation}
Noting that $v_t$ is continuous, we shall show that $v_t$ is Lipschitz with constants depending only on $||H||$ and $||H^{-1}||$. Let $x,x'\in B_R$ and assume, for ease, that $|y_t(x)|\geq |y_t(x')|$. By rearranging and applying the triangle inequality, we obtain:
\begin{equation}\nonumber
        \bigg|\phi(x)|y_t(x')|^2 y_t(x)-\phi(x')|y_t(x)|^2 y_t(x')\bigg| \leq 
\end{equation}
\begin{equation}\nonumber
        |y_t(x')|^2 \cdot|y_t(x)| \cdot|\phi(x)-\phi(x')| + |\phi(x')|\cdot |y_t(x)|^2 \cdot |y_t(x)-y_t(x')| +
\end{equation}
\begin{equation}\label{eq:big_ineq}
        |\phi(x')|\cdot |y_t(x)| \cdot (|y_t(x)|^2-|y_t(x')|^2) 
\end{equation}
Considering the three terms in turn, we see that by applying Equation \eqref{eq:phi_bdd};
\begin{equation}\label{eq:term1}
    |y_t(x')|^2 \cdot|y_t(x)| \cdot|\phi(x)-\phi(x')| \leq \frac{L}{c}|y_t(x')|^2 \cdot|y_t(x)| \cdot\bigg(|y_t(x)|+|y_t(x')|\bigg)\cdot |x-x'|
\end{equation}
For the second term, we have that, by applying Equation \eqref{eq:phi_bdd} and the Lipschitz condition for $y_t$ given in Lemma \ref{lem:param}:
\begin{equation}\label{eq:term2}
    |\phi(x')|\cdot |y_t(x)|^2 \cdot |y_t(x)-y_t(x')| \leq \frac{L}{c}\cdot |y_t(x')|^2\cdot |y_t(x)|^2 \cdot C|x-x'|
\end{equation}
For the third term, we have that:
\begin{equation}\nonumber
        |\phi(x')|\cdot |y_t(x)| \cdot (|y_t(x)|^2-|y_t(x')|^2) \leq 
\end{equation}
\begin{equation}\nonumber
         |\phi(x')|\cdot |y_t(x)| \cdot \bigg(|y_t(x)| + |y_t(x')|\bigg) \cdot |y_t(x)-y_t(x')|  \leq 
\end{equation}
\begin{equation}\label{eq:term3}
         \frac{L}{c^2}|y_t(x')|^2\cdot |y_t(x)| \cdot \bigg(|y_t(x)| + |y_t(x')|\bigg) \cdot C|x-x'|
\end{equation}
where the first inequality follows from expanding the product of squares, noting that $|y_t(x)|\geq |y_t(x')|$ by assumption and applying the reverse triangle inequality, whilst the second inequality follows from again applying Equation \eqref{eq:phi_bdd} and the Lipschitz condition for $y_t$ given in Lemma \ref{lem:param}.\\
\\
Combining the inequalities in \eqref{eq:term1}, \eqref{eq:term2} and \eqref{eq:term3} and returning to \eqref{eq:big_ineq}, we now have that:
\begin{equation}\nonumber
        \bigg|\phi(x)|y_t(x')|^2 y_t(x)-\phi(x')|y_t(x)|^2 y_t(x')\bigg| \leq 
\end{equation}
\begin{equation}\nonumber
        \frac{L}{c}|y_t(x')|^2 \cdot \bigg( |y_t(x)| \cdot\bigg(|y_t(x)|+|y_t(x')|\bigg) + 
        \frac{C}{c}\bigg(2|y_t(x)|^2 + |y_t(x)|\cdot|y_t(x')|\bigg)\bigg)\cdot |x-x'| \leq
\end{equation}
\begin{equation}\nonumber
        \frac{L}{c}\bigg( 2 + 
        \frac{3C}{c}\bigg)|y_t(x')|^2 \cdot |y_t(x)|^2\cdot |x-x'|
\end{equation}
where the second inequality follows from noting that $|y_t(x)|\geq |y_t(x')|$. Dividing through by $|y_t(x')|^2|y_t(x)|^2$, we now see that for $x,x'\neq 0$:
\begin{equation}\nonumber
\begin{split}
    |v_t(x)-v_t(x')| = & \bigg|\phi(x')\frac{y_t(x')}{|y_t(x')|^2}-\phi(x)\frac{y_t(x)}{|y_t(x)|^2}\bigg|\\
    & \leq \frac{L}{c}\bigg( 2 + 
        \frac{3C}{c}\bigg) |x-x'|
\end{split}
\end{equation}
Thus, inside $B_R$, $v_t$ satisfies a Lipschitz condition with constant $a_1=\frac{L}{c}( 2 + \frac{3C}{c})$, as desired.\\
\\
As $v_t$ is Lipschitz continuous, we can now apply Picard–Lindel\"{o}f theorem to see that the below ordinary differential equation uniquely defines a continuous flow, $\Gamma_t$ over $t\in [0,1]$: 
\begin{equation}\nonumber
    \frac{\partial }{\partial t}\Gamma_t(x) = v_t(\Gamma_t(x)), \quad \Gamma_0(x)=x
\end{equation}
Now, let $r = Re^{-a_1}$ and assume $x_1,x_2 \in B_r$. As $B_r \subseteq B_R$, $v_t(x)$ satisfies the Lipschitz condition with constant $a_1$ inside $B_r$ and thus:
\begin{align*}
|\Gamma_t(x_1) - \Gamma_t(x_2)| &= \left|x_1 - x_2 + \int_0^t v_s(\Gamma_s(x_1))-v_s(\Gamma_s(x_2))\ ds\right| \\
&\leq |x_1 - x_2| + \int_0^t a_1|\Gamma_s(x_1)- \Gamma_s(x_2)|\ ds
\end{align*}
Applying Gr\"{o}nwall's inequality, we now obtain the following Lipschitz bound on $\Gamma_t$: 
\begin{equation}\nonumber
    |\Gamma_t(x_1) - \Gamma_t(x_2)| \le |x_1-x_2|e^{a_1t} \le |x_1-x_2|e^{a_1}
\end{equation}
And, thus:
\begin{align*}
|\Gamma_t(x_1) - \Gamma_t(x_2)|
&\geq |x_1 - x_2| - \int_0^t|v_s(\Gamma_s(x_1))-v_s(\Gamma_s(x_2))|\ ds\\
&\geq \bigg(1 - \int_0^t a_1e^{a_1s} ds\bigg)|x_1 - x_2| \\
& \geq (2-e^{a_1}) |x_1 - x_2|
\end{align*}
where the last inequality follows by noting $t\in[0,1]$ and $a_1>0$. It now follows that $\Gamma_t$ is bi-Lipschitz for $a_1$ small enough, and thus a homeomorphism. Further, we now have that for all $x \in B_r$:
\begin{equation}\nonumber
    |\Gamma_t(x)| \leq |x|e^{a_1 t} \leq Re^{-a_1}e^{a_1t} \leq R 
\end{equation}
Therefore $\Gamma_t(x)\in B_R$ for all $t\in [0,1]$ and $x \in B_r$. \\
\\
Now for $x \in B_r$, consider the function:
\begin{equation}\nonumber
    g_t(x) = \frac{1}{2}\Gamma_t(x)' H \Gamma_t(x) + t \phi(\Gamma_t(x)).
\end{equation}
Note that $g_t$ is exactly the composition $S_t\circ \Gamma_t$ discussed in Section \ref{sec:morse} in the main text (c.f. Fig. \ref{fig:morse_homotopy}). Differentiating $g_t$ with respect to $t$ we see that:
\begin{equation}\nonumber
    \frac{\partial}{\partial t} g_t(x) = \Gamma_t(x)'H \frac{\partial}{\partial t}\Gamma_t(x) + \phi(\Gamma_t(x)) + t\nabla \phi(\Gamma_t(x))\cdot\frac{\partial}{\partial t}\Gamma_t(x)
\end{equation}
\begin{equation}\nonumber
     = \Gamma_t(x)'H v_t(\Gamma_t(x)) + \phi(\Gamma_t(x)) + t\nabla \phi(\Gamma_t(x))' v_t(\Gamma_t(x))
\end{equation}
\begin{equation}\nonumber
     = y_t(\Gamma_t(x))' v_t(\Gamma_t(x)) + \phi(\Gamma_t(x)) 
\end{equation}
where the second equality follows by the definition of $y_t$. Substituting the definition of $v_t$ now yields:
\begin{equation}\nonumber
     = -\phi(\Gamma_t(x))\frac{y_t(\Gamma_t(x))'y_t(\Gamma_t(x))}{|y_t(\Gamma_t(x))|^2} + \phi(\Gamma_t(x)) = 0 
\end{equation}
Thus, for $x \in B_r$, $g_t(x)$ is constant over $t\in[0,1]$. Substituting $t=0$ and $t=1$, and denoting $\Gamma:=\Gamma_1$, we now obtain:
\begin{equation}\nonumber
    \frac{1}{2}x'Hx = \frac{1}{2}\Gamma_0(x)'H\Gamma_0(x) =g_0(x) = g_1(x) = \frac{1}{2}\Gamma(x)'H\Gamma(x) + \phi(\Gamma(x)) = f(\Gamma(x)),
\end{equation}
as desired. Finally, we show that $\Gamma$ is $C^1$. To do so it suffices to show that $\Gamma_t$ is $C^1$ with respect to $x$. By the `Smoothness of Flows' theorem of Section 17.6 of \cite{hirsch2004differential}, we have that $\Gamma_t$ is $C^1$ with respect to $x$ if $v_t$ is.\\
\\
To show continuous differentiability of $v_t$, we first note that $y_t$ is $C^1$ with respect to $x$, and thus so is $v_t$ for $x \neq 0$. To show $v_t$ is differentiable at $0$, first note that by Equation \ref{eq:v_cts} and the fact that $L$ can be made as small as desired by reducing the value of $R_0$ (see Lemma \ref{lem:lipschitz}), we have that:
\begin{equation}\nonumber
    \lim_{x\rightarrow 0} \frac{|\phi(x)|}{|y_t(x)|^2}=0
\end{equation}
Thus, for any $\vec{w}\in\mathbb{R}^{D}\neq 0$:
\begin{equation}\nonumber
    \nabla_{\vec{w}} v_t(0) = \lim_{h\rightarrow 0}\frac{v_t(h\vec{w})-v_t(0)}{h} = \bigg(\lim_{h\rightarrow 0 } \frac{\phi(h\vec{w})}{|y_t(h\vec{w})|^2}\bigg)\cdot \bigg(\lim_{h\rightarrow 0 } \frac{y_t(h\vec{w})-y_t(0)}{h}\bigg) = 0 \cdot \nabla_{\vec{w}} y_t(0) = 0
\end{equation}
where the second inequality follows from the definitions of $v_t$ and $y_t$. As this holds for any $\vec{w}\neq 0$, it follows that $\nabla v_t(0)=0$. To show that $\nabla v_t$ is continuous at $0$, we note that by Lipschitzness of $v_t$, for any $y\in B_{R}$:
\begin{equation}\nonumber
    |\nabla v_t (y)| = \bigg|v_t(y)-v_t(0)+ \frac{o(|y|)}{|y|}\bigg| \leq a_1 |y|+ \frac{o(|y|)}{|y|},
\end{equation}
where the equality follows from the definition of Jacobian. Thus:
\begin{equation}\nonumber
    \lim_{y\rightarrow 0}|\nabla v_t (y)| \leq \lim_{y\rightarrow 0} \bigg(a_1 |y| + \frac{o(|y|)}{|y|}\bigg)=0,
\end{equation}
as desired. It now follows that $v_t$, and thus $\Gamma$, are $C^1$.
\end{proof}

A standard corollary of the Morse Lemma is the following:

\begin{mdframed}
    \begin{corr}[Morse Points are Isolated]\label{corr:morse_isolated}
        If $S$ is a $C^k$ manifold, with $k\geq 2$ and $f:S \rightarrow \mathbb{R}$ is Morse, then the Morse points of $f$ are isolated. Further, if $S$ is compact then $f$ must have finitely many Morse points.
    \end{corr}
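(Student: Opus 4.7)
The plan is to deduce isolation of Morse points directly from the Morse Lemma (Lemma \ref{lem:morse}), then upgrade to finiteness using compactness and continuity of $\nabla f$.

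First, I would fix a Morse point $p$ and translate it to the origin, writing $H := H_f(0)$, which is invertible by the Morse assumption. Applying Lemma \ref{lem:morse}, I obtain $r > 0$ and a $C^1$ bi-Lipschitz homeomorphism $\Gamma : B_r \to \Gamma(B_r)$ with $f(\Gamma(x)) = \tfrac{1}{2} x' H x$. The key calculation is the chain rule: for $x \in B_r$,
\begin{equation}\nonumber
\nabla(f \circ \Gamma)(x) = D\Gamma(x)^\top \, \nabla f(\Gamma(x)) = H x.
\end{equation}
Because $H$ is non-degenerate, $Hx \neq 0$ for every $x \neq 0$, so $\nabla f(\Gamma(x)) \neq 0$ as well (otherwise the composition would vanish). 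Since $\Gamma$ is a bijection from $B_r$ onto the open neighborhood $\Gamma(B_r)$ of $p$, this proves that $p$ is the unique critical point of $f$ in $\Gamma(B_r)$, and in particular $p$ is isolated among Morse points.

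For the second claim, suppose for contradiction that $S$ is compact and $f$ has infinitely many Morse points $\{p_n\}$. By sequential compactness there is a subsequence (relabelled $\{p_n\}$) converging to some $p^* \in S$. Continuity of $\nabla f$ forces $\nabla f(p^*) = \lim_n \nabla f(p_n) = 0$, so $p^*$ is critical, and by the hypothesis that $f$ is Morse, $p^*$ must itself be a Morse point. Applying the isolation result just established at $p^*$, we obtain a neighborhood of $p^*$ containing no critical points other than $p^*$ itself; but this neighborhood must also contain infinitely many of the distinct Morse points $p_n$, a contradiction.

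The only subtle step is the chain-rule argument justifying that a zero of $\nabla f \circ \Gamma$ at $x$ forces $x = 0$. Because Lemma \ref{lem:morse} only asserts that $\Gamma$ is $C^1$ and bi-Lipschitz — not that $D\Gamma$ is invertible everywhere — one has to argue from $Hx \neq 0$ directly rather than by inverting $D\Gamma$; fortunately the implication $D\Gamma(x)^\top \nabla f(\Gamma(x)) \neq 0 \Rightarrow \nabla f(\Gamma(x)) \neq 0$ is trivial and avoids this worry entirely.
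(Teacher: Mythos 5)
Your proof is correct and takes exactly the route the paper intends: the corollary is stated there without proof as a standard consequence of Lemma \ref{lem:morse}, and your argument (the quadratic normal form $f(\Gamma(x))=\tfrac12 x'Hx$ forces $\nabla f\neq 0$ on $\Gamma(B_r)\setminus\{p\}$, then sequential compactness plus continuity of $\nabla f$ yields finiteness) is the standard one. Your closing remark is also the right way to handle the only delicate point: you only need the implication $\nabla f(\Gamma(x))=0\Rightarrow\nabla(f\circ\Gamma)(x)=0$, so no invertibility of $D\Gamma$ is required.
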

    \vspace*{0.3cm}
\end{mdframed}

\noindent
As noted above, this proof of the Morse lemma is useful for our purposes as it provides an explicit expression for the neighborhood over which $\Gamma$ is a bi-Lipschitz differomorphism. This is formalized by the below corollary. For concision in the following proof, and the following proof only, we use $\land$ to represent the minimum operator.

\begin{mdframed}
    \begin{corr}[Constants of the Morse Lemma]\label{corr:morse_constants}
        Suppose $r$ is a positive constant satisfying:
        \begin{equation}\nonumber
            \sup_{x\in B_r}||H_f(x)-H|| < ||H^{-1}||^{-1}\bigg[(1-m) \land  \frac{m^2\ln(2)}{6||H||||H^{-1}||}\bigg] \quad \text{ and }\quad r < \frac{m}{4||H||||H^{-1}||}
        \end{equation}
        for some $m\in (0,1)$, then $r$ satisfies the Morse Lemma. That is, $\Gamma$ is a well-defined bi-Lipschitz diffeomorphism on $B_{r}$.
    \end{corr}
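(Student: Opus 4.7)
The plan is to substitute the given bounds directly into the proof of Lemma \ref{lem:morse} and track the constants. Set $\phi(x) = f(x) - \tfrac{1}{2}x'Hx$ as in that proof and let $L := \sup_{x \in B_r}\|H_f(x)-H\| = \sup_{x \in B_r}\|H_\phi(x)\|$. By the mean value theorem (or Lemma \ref{lem:lipschitz}), $L$ serves as the Lipschitz constant for $\nabla\phi$ on $B_r$, so I shall invoke the Morse Lemma with this $L$ and with $R_0$ chosen so that the final neighborhood produced contains $B_r$.

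First I would verify that the Morse Lemma's precondition $L \leq \|H^{-1}\|^{-1}$ holds: the left side of the first hypothesis gives $L < \|H^{-1}\|^{-1}(1-m) < \|H^{-1}\|^{-1}$. Setting $c := \|H^{-1}\|^{-1} - L$ and $C := \|H\| + L$, this same bound yields $c > m\|H^{-1}\|^{-1}$, while $\|H\|\|H^{-1}\| \geq 1$ (which is equivalent to $\|H^{-1}\|^{-1} \leq \|H\|$) gives $C < 2\|H\|$.

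The main step is to verify that the Lipschitz constant $a_1 = (L/c)(2 + 3C/c)$ of the vector field $v_t$ appearing in the Morse Lemma proof satisfies $a_1 < \ln 2$; this is the threshold needed for the lower bound $|\Gamma_t(x_1)-\Gamma_t(x_2)| \geq (2-e^{a_1})|x_1-x_2|$ to be positive and hence for $\Gamma$ to be bi-Lipschitz. Writing $x := L\|H^{-1}\| < 1$ and $k := \|H\|\|H^{-1}\| \geq 1$, substitution gives
\begin{equation*}
a_1 = \frac{x\bigl(2 + x + 3k\bigr)}{(1-x)^2}.
\end{equation*}
The numerator satisfies $2 + x + 3k \leq 3 + 3k \leq 6k$ (using $x < 1$ and $k \geq 1$), and the denominator satisfies $(1-x)^2 > m^2$ (using $x < 1-m$ from the first hypothesis). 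Hence $a_1 < 6xk/m^2$. The other half of the first hypothesis gives $x < m^2\ln 2/(6k)$, so multiplying yields $a_1 < \ln 2$, as required.

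Finally, I use the second hypothesis $r < m/(4k)$ to verify that $B_r$ lies inside the final bi-Lipschitz neighborhood produced by the Morse Lemma construction: with $R = cR_0/C$ and final radius $R e^{-a_1}$ as in the proof of Lemma \ref{lem:morse}, the bounds $c/C > m/(2k)$ and $e^{a_1} < 2$ combine with $r < m/(4k)$ to make the inequality $R e^{-a_1} \geq r$ compatible with a valid choice of $R_0$ (using the stronger form of the Morse Lemma noted after Equation \eqref{eq:v_cts}, where $v_t$ can in fact be extended to the full Lipschitz domain of $\nabla\phi$). The main obstacle I anticipate is this final bookkeeping, since it requires tracking how the intermediate radius $R$ and the exponential factor $e^{-a_1}$ interact with the two numerical bounds to conclude the containment $B_r \subseteq B_{Re^{-a_1}}$ cleanly; fortunately the strict inequalities in the hypotheses leave enough slack to absorb constants such as the $2$ from $e^{a_1} < 2$ and the $2$ from $C/c < 2k/m$, matching the factor $4$ in the denominator of the second bound.
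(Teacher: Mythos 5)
Your proposal is correct and takes essentially the same route as the paper's proof: both track the constants of the homotopy construction and verify the same three criteria, namely $L<\|H^{-1}\|^{-1}$, $a_1<\ln 2$ (your bounds $2+x+3k\le 6k$ and $(1-x)^2>m^2$ in the variables $x=L\|H^{-1}\|$, $k=\|H\|\|H^{-1}\|$ are the paper's $2c+3C\le 6\|H\|$ and $c>m\|H^{-1}\|^{-1}$ in normalized form), and the radius condition, checked exactly as in the paper via $c/C>m/(2k)$, $e^{a_1}<2$ and $r<m/(4k)$. The only difference is cosmetic: your closing appeal to a ``stronger form'' of the Morse Lemma is not needed, since the paper's own criterion for the radius is precisely the inequality $r\le (c/C)e^{-a_1}$ that your numerical bounds already establish.
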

    \vspace*{0.2cm}
\end{mdframed}
\begin{remark}
    It is worth emphasizing that the conclusion of Corollary \ref{corr:morse_constants} holds \text{for any} $m\in (0,1)$. For instance, choosing $m=\sfrac{1}{2}$, we see that the conditions reduce to:
        \begin{equation}\nonumber
            \sup_{x\in B_r}||H_f(x)-H|| < \frac{1}{2||H^{-1}||} \land  \frac{\ln(2)}{24||H||||H^{-1}||^2} \quad \text{ and }\quad r < \frac{1}{8||H||||H^{-1}||}.
        \end{equation}
    It follows that the value of $r$ depends only on the constants $||H||$ and $||H^{-1}||$, and the behavior of $H_f$ near the origin.
\end{remark}
\begin{proof}
    By tracking the constants in the proof of Lemma \ref{lem:morse}, we see that $\Gamma_t$ is a well-defined bi-Lipschitz diffeomorphism if the following criteria are met:
    \begin{equation}\nonumber
       \textbf{(1)}~~ L \leq ||H^{-1}||^{-1}, \quad \textbf{(2)}~~ 2-e^{a_1} > 0, \quad \text{ and } \quad \textbf{(3)}~~ r \leq \frac{c}{C}e^{-a_1}.
    \end{equation}
    where $L$ is the Lipschitz constant of $\phi$ on $B_r$, and $a_1, c$ and $C$ are as defined previous.\\
    \\
    Let $m\in(0,1)$ be arbitrary and let $r$ satisfy the statement of the corollary. For notational ease, let $\epsilon=m||H^{-1}||^{-1}$. By Lemma \ref{lem:lipschitz} and the assumption of the corollary, $L\leq\sup_{x\in B_r}||H_f(x)-H||<||H^{-1}||^{-1}-\epsilon$ and thus, the first criterion holds. Further, we also have that:
    \begin{equation}\nonumber
        L < \frac{\epsilon^2\text{ln}(2)}{6||H||} < \frac{\epsilon^2\text{ln}(2)}{2||H^{-1}||^{-1} + 3||H|| + L} = \frac{\epsilon^2\text{ln}(2)}{2(||H^{-1}||^{-1}-L) + 3(||H|| + L)},
    \end{equation}
    as, by common matrix identities and the above $L<||H^{-1}||^{-1}\leq||H||$. Thus, as $\epsilon < ||H^{-1}||^{-1}-L=c$, we have that:
    \begin{equation}\nonumber
        L < \frac{\epsilon^2\text{ln}(2)}{2c + 3C} < \frac{c^2\text{ln}(2)}{2c + 3C}.
    \end{equation}
    Rearranging and noting the definition of $a_1$, we have that $a_1<\text{ln}(2)$ and thus $2-e^{a_1}>0$, i.e. the second criterion holds.  Finally, through similar logic to the above, we have that:
    \begin{equation}\nonumber
        r < \frac{m||H^{-1}||^{-1}}{4||H||} \leq \frac{||H^{-1}||^{-1}-L}{2(||H||+L)} = \frac{1}{2}\cdot\frac{c}{C} \leq \frac{c}{C} e^{a_1},
    \end{equation}
    where the final inequality follows from the previous. Thus condition (3) holds. The result now follows.
\end{proof}
\vspace*{2cm}
\section*{Supplementary Material}

Supplementary material may be found at the following address:\\
\\
\textcolor{blue}{\underline{\href{https://www.overleaf.com/read/gxbjfpkqnshw\#3eee08}{https://www.overleaf.com/read/gxbjfpkqnshw\#3eee08}}}

\end{document}